\DeclareMathOperator{\dd}{\mbox{d}\!} %dif and only iferential
\DeclareMathOperator{\Ker}{Ker}
\DeclareMathOperator{\FORM}{FORM}
\DeclareMathOperator{\Id}{Id}
\newcommand{\e}{\mathrm{e}}
 \def\mG{\mathsf{G}}
 \def\mH{\mathsf{H}}
 \def\mV{\mathsf{V}}
 \def\mE{\mathsf{E}}
 \def\mK{\mathsf{K}}
 \def\mK{\mathsf{K}}
 \def\mv{\mathsf{v}}
 \def\mw{\mathsf{w}}
 \def\mz{\mathsf{z}}
 \def\me{\mathsf{e}}
 \def\mesour{{\mathsf{e}_{\rm sour}}}
 \def\metarg{{\mathsf{e}_{\rm targ}}}
 \def\mfsour{{\mathsf{f}_{\rm sour}}}
 \def\mftarg{{\mathsf{f}_{\rm targ}}}
 \def\mw{\mathsf{w}}
 \def\mz{\mathsf{z}}
 \def\mf{\mathsf{f}}
\newcommand{\K}{\mathbb{K}}
\newcommand{\R}{\mathbb{R}}
\newcommand{\N}{\mathbb{N}}
\newcommand{\C}{\mathbb{C}}
\def\XXint#1#2#3{{\setbox0=\hbox{$#1{#2#3}{\int}$ }
\vcenter{\hbox{$#2#3$ }}\kern-.6\wd0}}
\theoremstyle{plain}
\newaliascnt{cor}{theo}
\newaliascnt{prop}{theo}
\newaliascnt{lemma}{theo}
\newtheorem{lemma}[lemma]{Lemma}
\newtheorem{prop}[prop]{Proposition}
\newtheorem{cor}[cor]{Corollary}
\theoremstyle{defi} 
\newaliascnt{defi}{theo}
\newaliascnt{assum}{theo}
\newaliascnt{assums}{theo}
\newaliascnt{prob}{theo}
\newtheorem{defi}[defi]{Definition}
\theoremstyle{rem}
\newaliascnt{rems}{theo}
\newaliascnt{rem}{theo}
\newaliascnt{exa}{theo}
\newaliascnt{exs}{theo}
\newtheorem{rem}[rem]{Remark}
\newtheorem{exa}[exa]{Example}
\numberwithin{equation}{section}
\numberwithin{lemma}{section}
\newcommand{\be}{\begin{equation}}
\newcommand{\ee}{\end{equation}}
\newcommand{\beq}{\begin{eqnarray}}
\newcommand{\eeq}{\end{eqnarray}}
\title{Non-Markovian heat flows on directed hypergraphs} 
\subjclass[2010]{}
\keywords{}
\author[D.~Mugnolo]{Delio Mugnolo
\orcidlink{0000-0001-9405-0874}}
\address{Lehrgebiet Analysis, Fakult\"at Mathematik und Informatik, Fern\-Universit\"at in Hagen, D-58084 Hagen, Germany}
\email{delio.mugnolo@fernuni-hagen.de}
\date{\today}
\thanks{
%I am grateful to Masahiro Ikeda (Osaka) for pointing out \cite{TakMiyIke20,IkeMiyTak22,IkeUch23} to me; and to Matthias Keller for letting me know about~\cite{BarKel25} and for asking the question that has led me to develop Section~\ref{sec:abstrsimpcomphyper}.
The author was partially supported by the Deutsche Forschungsgemeinschaft DFG (Grant 397230547)}
\keywords{Hypergraphs; Dirichlet forms; Eventual positivity}
\subjclass[2010]{47D06, 05C50, 05C65, 39A12}
\begin{document}

\maketitle
\begin{abstract}
%We develop a semigroup theory for Laplacians on directed hypergraphs, a setting where classical Markovian properties of graph Laplacians break down. This framework reveals new phenomena: the heat flow may lose positivity, stochasticity, or diagonal dominance, yet regain them asymptotically under precise combinatorial conditions. We derive spectral bounds, characterize eventual positivity and contractivity in $\infty$-norm, and identify the rare cases where the flow remains Markovian. Several examples -- culminating in  hypergraphs that are dual of oriented graphs as well as  directed realizations of the Fano plane -- highlight the genuinely higher-order nature of these dynamics

We introduce a semigroup  framework for Laplacians on directed hypergraphs, extending the classical heat flow models on graphs and establishing hypergraphs as prototypical models for non-Markovian diffusion. We  apply spectral surgery methods to
derive eigenvalue bounds, thus describing large-time behaviour of the heat flow. Unlike on standard graphs, heat flows on directed hypergraphs may lose positivity and/or $\infty$-contractivity, yet can recover them eventually or asymptotically under specific combinatorial configurations:
examples based on duals of oriented graph and  realisations of the Fano plane illustrate these phenomena.
 Our approach combines  combinatorial, order-theoretic and linear-algebraic methods.

%We develop a semigroup framework for Laplacians on directed hypergraphs, where classical Markovian properties of graph Laplacians can fail. The associated heat flow may lose positivity, stochasticity, or diagonal dominance, yet recover them eventually under precise combinatorial conditions. We characterize spectral bounds, eventual positivity, and contractivity, and identify the rare configurations that yield genuinely Markovian dynamics. Examples involving duals of oriented graphs and directed realizations of the Fano plane illustrate these phenomena.
.\end{abstract}

\section{Introduction}
The analysis of diffusion processes on discrete structures has long revolved around the Laplacian on graphs. In the classical setting, the interplay between the generator’s order structure and the geometry of the underlying space is fully captured by the Beurling-Deny criteria: self-adjoint operators that generate positive, or even Markov, semigroups correspond to quadratic forms enjoying suitable lattice and contractivity properties. These results provide the functional-analytic backbone of the theory of Markov processes and of the modern calculus of Dirichlet forms \cite{Ouh05,FukOshTak11,BakGenLed14} and their implications on graphs are now widely understood \cite{KelLenWoj21}: roughly speaking, each sub-Markovian semigroup on a discrete space is generated by a Schrödinger operator on a graph.

Extending this theory to directed hypergraphs -- where edges connect \textit{sets} of sources to \textit{sets} of targets -- reveals fundamentally new behaviour: the corresponding heat flow may lose positivity or mass preservation.
%, and this can to a large extent be described in terms of precise combinatorial conditions. 
Understanding how such failures occur is crucial if one wishes to view hypergraph Laplacians not merely as combinatorial curiosities, but as models of higher-order dynamics, much in the spirit of \cite{Bia21b}.
In this article we develop a semigroup framework for such Laplacians, identifying general conditions under which unusual diffusive behaviours vanish asymptotically or even after a finite transient. Therefore, we regard this article as a contribution to the field of non-Markovian diffusion \cite{Mur08}, which has recently become rather popular. We will show that on directed hypergraphs the Markovian property can break down in many different ways: they are thus a very convenient environment to test different anomalous diffusive behaviours.

Hypergraphs, first introduced by Berge \cite{Ber73}, are a now classical topic of discrete mathematics: they naturally generalise graphs to incidence structures in which more than two points are allowed to belong to a so-called \textit{hyperedge}. Beyond their rich mathematical theory, hypergraphs have seen many natural applications in different fields of applied sciences, including operations research and transportation networks \cite{GalLonPal93}, molecular modelling \cite{KonSko95}, neural networks \cite{FenYouZha19} and much more, see~\cite{Bre13} for an overview.
 
In this article we introduce a hypergraph Laplacian -- and, in turn, a heat flow -- following an approach that, to the best of our knowledge, was first proposed in \cite{RefRus12} and, in a normalised version, in~\cite{SaiManSuz18,JosMul19}, following the blueprint of directed graphs, upon considering a hypergraph version of the divergence and gradient operators. The crucial difference from earlier \textit{Ansätze} that will be overviewed below is that hypergraphs \textit{must} here be directed (i.e., each hyperedge is assigned an orientation by fixing a distinct set of source and target endpoints). The importance of this paradigm for cellular networks and non-equilibrium thermodynamics has been observed in~\cite{KlaHauThe09, DalLecPol23} respectively: in particular, in \cite{PolEsp14}, motivated by relevant chemical models, the authors replace incidence matrices -- that is, discrete divergence operators -- by more general \textit{stoichiometric matrices} which, in turn, also induce a relaxed notion of Laplacian.

Indeed, the motivation in much of the work on hypergraphs in the chemical-physical community seems to be of homological nature: the goal is to understand the nature of cycles and co-cycles of such stoichiometric matrices and relate them to the original chemical reactions. On the other hand, several papers in applied linear algebra  have focused on a hypergraph approach to image processing and  data analysis, see \cite{SaiManSuz18,JosMul19}.
 Our goal is different: Inspired by the theory of discrete Dirichlet spaces, the main aim of this article is to expand on the note~\cite{Mug14d} and discuss the properties of the linear dynamical system associated with such hypergraph Laplacians. 
 
 In view of the linearity of our finite dimensional model, it is clear that such a dynamical system is globally (both forward and backward) well-posed: its implications in modelling have been discussed already in~\cite{MulKueJos20,JosMulZha22,Faz23,FazTenLuk24}, also in the context of nonlinear heat flows. We are going to deepen this investigation by studying relevant qualitative properties of this flow. In particular, we identify a few combinatorial cases in which the evolution equation driven by such Laplacian is governed by a (sub-)Markovian semigroup: this is always the case on graphs, but we will see that on hypergraphs the sub-Markov property is a rare and precious property. This makes heat flows on hypergraphs to a prototypical field of application of the young, fascinating area of eventually positive semigroup \cite{Glu22}: Many  instances of non-positive semigroups have been found in the last years, those generated by the Dirichlet-to-Neumann operators, by Laplacians, with nonlocal boundary conditions, or by powers of Laplacians, both on domains and graphs \cite{DanGluKen16,GreMug20,BecGreMug21,DenKunPlo21,
 GluMui24}. We further elaborate on these ideas by introducing the notion of asymptotically $\infty$-contractive semigroup, which we fully characterise in \autoref{prop:eventlinftycontr} and \autoref{cor:eventlinftycontr-matr}: this may be of independent interest.

\smallskip
Let us sketch the structure of the present paper.
In Section~\ref{sec:general} we present the relevant notion of Laplacian $\mathcal L$ on directed hypergraphs and encounter the associated heat flow.
For the purpose of studying the large-time behaviour of the semigroup $(\e^{-t\mathcal L})_{t\ge 0}$, it is important to find bounds on the lower eigenvalues of $\mathcal L$, especially its lower ones: this will be done in Section~\ref{sec:spectheor}, with a focus on how eigenvalues change if an elementary hypergraph is perturbed.

It is not difficult to find examples showing that the heat flow on directed hypergraphs may fail to be merely \textit{sub}-Markovian: they may lack the positivity or the stochasticity property (or both); but, in some cases, such heat flows turn out to be \textit{eventually} sub-Markovian. We discuss these properties in Section~\ref{sec:ordertheor}.

Directed hypergraphs are special instances of incidence structures; as such, it is natural to study their dual structures. The theory of directed hypergraphs that are duals of directed graphs turns out to be rich and worth exploring. We will do so in Section~\ref{sec:graphypergra}, also elaborating on the interplays with theory of simplicial complexes: we put forward the opinion that diffusion on directed hypergraphs is a convenient relaxation of diffusion on  simplicial complexes.

We conclude this article by discussing in Section~\ref{sec:fano} the case of an especially nice class of directed hypergraphs: the oriented realisations of the Fano plane, also commenting on the similarities and the differences of the heat flow driven by these realisations.

\subsection*{Related work: An overview of other approaches for analysis on hypergraphs}

Several approaches to discrete analysis on hypergraphs are available and, in particular, several notions of adjacency and Laplace structures on hypergraphs have been proposed in the literature: let us briefly review them, partially elaborating on the discussion in~\cite{MulKueBoe22}. The easiest one -- considered, e.g., in \cite{ZhoHuaSch07,ProBenTud22,SaiHer23,MazSolTol23} -- suggests to replace each hyperedge containing $n$ vertices with $\frac{n(n-1)}{2}$ edges pairwise connecting any two vertices, in accordance with Berge's definition of a hypergraph's \textit{2-section}; and to regard the Laplacian on such 2-section as a hypergraph Laplacian. This is a robust approach, also used for the study of centrality measures in several articles by Tudisco and his co-authors (see~\cite{TudHig21} and references therein), but we argue that, in this way, fundamental features of a higher order networks are lost. A comparison of the random walk on the  hypergraphs' 2-section and random walks induced by (adapted) versions of other  hypergraph Laplacians has been performed in~\cite{MulKueBoe22}.
A more sophisticated way of turning a hypergraph into a graph and then defining on it a nonlinear Laplacian (more precisely: the subdifferential of the total variation) was put forward in \cite{HeiSetJos13}, who also discussed the variational aspect of this theory. A different but related approach was explored in~\cite{Lou15} and later interpreted in~\cite{Yos19} in the framework of submodular transformations: using this approach, diffusion equations have been studied in~\cite{TakMiyIke20,IkeMiyTak22,IkeUch23}; in particular, the large-time behaviour of the (possibly nonlinear) heat flow is studied in \cite{IkeUch23}. Two different tensorial approaches to defining the adjacency matrix of directed and undirected hypergraphs were suggested in \cite{FriWid95} and in~\cite{CooDut12,PeaZha14,Pea15}: in these articles, the focus was on the properties on describing the properties of the two largest eigenvalues and the expander property defined in terms of the second largest eigenvalue of such adjacency matrix; and on the development of a general algebraic graph theory, respectively. A different, more combinatorial way of defining the adjacency matrix of an undirected hypergraph was pursued in~\cite{FenLi96}, again with the goal of providing bounds for the second eigenvalue and of studying the behaviour as the number of vertices tends to $\infty$.
Yet another different structure was suggested in \cite{Chu93}: it borrows ideas of the theory of simplicial complexes and is, therefore, restricted to hypergraphs displaying certain regularity properties: in particular, each hyperedge must contain the same number of vertices (the theory of Laplacian of simplicial complexes is, indeed, very well studied, see e.g.\ \cite{HorJos13b,ParRosTes16,Che20,BarKel25}). In~\cite{KurMulNab25}, a description of undirected hypergraphs inspired by the theory of metric graphs is proposed, without explicitly suggesting how to use it to introduce a Laplacian on directed hypergraphs.
Also, the Moore--Penrose inverse of a graph Laplacian has been recently interpreted as a hypergraph Laplacian in~\cite{Est25}.

\section{General properties of directed hypergraphs}\label{sec:general}

Directed hypergraphs were introduced independently by several authors: we refer to~\cite{GalLonPal93} for a historical overview. Let us recall the fundamental ideas.

\begin{defi}\label{def:hyper}
 A \emph{directed hypergraph} is a pair $\mH=(\mV,\mE)$, where $\mV$ is a set and $\mE$ is a family (more precisely, a multiset) of pairs, $\me=(\me_{\rm sour},\me_{\rm targ})$, such that $\me_{\rm sour},\me_{\rm targ}\in {\mathcal P}(\mV)$ with $\me_{\rm sour}\cap \me_{\rm targ}=\emptyset$: we sometimes also use the notation
\[
\begin{split}
\me&=
%(\me_{\rm sour},\me_{\rm targ})=
\vv{\{\mv_1,\ldots,\mv_h\},\{\mw_1,\ldots,\mw_k\}}\qquad \hbox{with $h,k\in \mathbb N_0$.}\\
&\qquad{=:\me_{\rm sour}}\qquad \quad{=:\me_{\rm targ}}
\end{split}
\]
The elements of $\mV$ and $\mE$ are called \emph{vertices} and \emph{hyperedges}, respectively.
We call $\me_{\rm sour}$ (resp., $\me_{\rm targ}$) the \emph{source} (resp., \emph{target}) \emph{endsets} of the hyperedge $\me$.
\end{defi}

Throughout this article we  restrict to the case of finite hypergraphs, i.e., $\mV$ and hence $\mE$ are finite. Observe that if $\me=(\me_{\rm sour},\me_{\rm targ})$ is a hyperedge in $\mH$, then the above definition does not prevent either the ``reversed'' hyperedge $(\me_{\rm sour},\me_{\rm targ})$ or another copy of $\me$ to possibly belong to $\mE$, too: this will be important in the following, for example in the construction in \autoref{def:union}. In this sense, \autoref{def:hyper} is the generalisation of the notion of directed multigraph to the hypergraph context.

Inspired by the  case of directed graphs, we introduce  a notion of Laplacian on directed hypergraphs.

\begin{defi}
Let $\mH$ be a directed hypergraph.
The \emph{(signed) incidence matrix} of $\mH$ is the $\mV\times \mE$ matrix $\mathcal I=(\iota_{\mv\me})$ given by
$${\iota}_{\mv \me}:=\left\{
\begin{array}{rl}
+1 & \hbox{if}~\mv\in \me_{\rm targ},\\
-1 & \hbox{if}~\mv\in \me_{\rm sour},\\
0 & \hbox{otherwise}.
\end{array}\right.$$ 
The associated \emph{Laplacian} is the $\mV\times\mV$-matrix 
\[
\mathcal L:=\mathcal I \mathcal I^\top .
\]
\end{defi}

Clearly, $\iota_{\mv\me}\iota_{\mw\me}\in \{-1,0,+1\}$ for all $\mv,\mw\in \mV$ and all $\me\in \mE$: 
%this value does not depend on whether $\mv,\mw$ individually lie in $\me_{\rm sour}$ or $\me_{\rm targ}$: indeed,
more precisely,
\begin{itemize}
\item $\iota_{\mv\me}\iota_{\mw\me}=+1$ if and only if $\mv,\mw\in \me_{\rm sour}$; or $\mv,\mw\in \me_{\rm targ}$;
\item $\iota_{\mv\me}\iota_{\mw\me}=-1$ if and only if $\mv\in \me_{\rm sour}$ and $\mw\in \me_{\rm targ}$; or $\mv\in \me_{\rm targ}$ and $\mw\in \me_{\rm sour}$;
\item $\iota_{\mv\me}\iota_{\mw\me}=0$ if and only if $\me\not\in \mE_\mv\cap \mE_\mw$.
\end{itemize}
Following~\cite{MulHorJos22}, in the former (resp., latter) case we say that $\mv,\mw$ are \textit{co-oriented} (resp., \textit{anti-oriented}) in $\me$. 

We use throughout the following notation:
\begin{itemize}
\item $\mE_\mv$ is the set of all hyperedges which $\mv$ belongs to;
\item $\mE^{\backslash\!\backslash}_{\mv\mw}$ is the set of all hyperedges in which $\mv,\mw$ are co-oriented;
\item $\mE^{\not\backslash\!\backslash}_{\mv\mw}$ is the set of all hyperedges in which $\mv,\mw$ are anti-oriented.
\end{itemize}

We denote by $\#S$ the number of elements of a set $S$: so $\#\mV,\#\mE$ are the number of vertices and hyperedges in a hypergraph, respectively; $\#\mesour$ is the number of vertices in the source endset of a hyperedge $\me$, and so on. In particular, in analogy with the classical case of graphs, we hence use the following notations:
\[
\deg(\mv):=\#\mE_\mv,\quad \deg_{\min}(\mV):=\min_{\mv\in\mV}\deg(\mv),\quad \deg_{\max}(\mV):=\max_{\mv\in\mV}\deg(\mv).
\]

\begin{exa}
Consider two hypergraphs $\mH_1,\mH_2$, each consisting of three vertices and one hyperedge, as depicted in \autoref{fig:twobasiccg}.

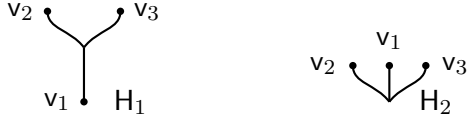
\begin{figure}[ht]
\begin{tikzpicture}[scale=.6, thick]

  % Coordinates
  \coordinate (v1) at (0,-.5);
  \coordinate (split) at (0,0.7);
  \coordinate (v2) at (-0.8,1.5);
  \coordinate (v3) at (0.8,1.5);

  % Draw edges (no arrows)
\draw (v1) -- (split);
  \draw[thick] 
    (split) .. controls +(-0.3,0.4) and +(0,-0.4) .. (v2);
  \draw[thick] 
    (split) .. controls +(0.3,0.4) and +(0,-0.4) .. (v3);

  % Large arrowhead at branching point
%  \draw[-{Latex[length=4mm, width=3mm]}, thick] 
%    ([shift={(0,-0.1)}]split) -- ([shift={(0,0.1)}]split);

  % Vertices
  \fill (v1) circle (2.3pt) node[left=1pt] {$\mv_1$};
  \fill (v2) circle (2.3pt) node[left=1pt] {$\mv_2$};
  \fill (v3) circle (2.3pt) node[right=1pt] {$\mv_3$};
  \fill (v1) circle (0pt) node[right=7pt] {$\mH_1$};
\end{tikzpicture}
\qquad\qquad
\begin{tikzpicture}[scale=.6, thick]

  % Coordinates
  \coordinate (v1) at (0,1.5);
  \coordinate (split) at (0,0.7);
  \coordinate (v2) at (-0.8,1.5);
  \coordinate (v3) at (0.8,1.5);

  % Draw edges (no arrows)
%  \draw[thick] 
%    (v1) .. controls +(0,0.4) and +(0,-0.4) .. (split);
\draw (v1) -- (split);
  \draw[thick] 
    (split) .. controls +(-0.3,0.4) and +(0,-0.4) .. (v2);
  \draw[thick] 
    (split) .. controls +(0.3,0.4) and +(0,-0.4) .. (v3);

  % Large arrowhead at branching point
%  \draw[-{Latex[length=4mm, width=3mm]}, thick] 
%    ([shift={(0,-0.1)}]split) -- ([shift={(0,0.1)}]split);

  % Vertices
  \fill (v1) circle (2.3pt) node[above=2pt] {$\mv_1$};
  \fill (v2) circle (2.3pt) node[left=2pt] {$\mv_2$};
  \fill (v3) circle (2.3pt) node[right=2pt] {$\mv_3$};
  \fill (split) circle (0pt) node[right=7pt] {$\mH_2$};
  
\end{tikzpicture}\caption{Two hypergraphs consisting of three vertices and one hyperedge}\label{fig:twobasiccg}
\end{figure}

The corresponding incidence matrices are
\[
\mathcal I_1=\begin{pmatrix}
-1 \\ 1 \\ 1
\end{pmatrix},\qquad 
\mathcal I_2=\begin{pmatrix}
1 \\ 1 \\ 1
\end{pmatrix}.
\]

Then we find for $\mH_1$:
\[
\#\mE_{\mv_1}=\#\mE_{\mv_3}=\#\mE_{\mv_3}=1,\qquad 
\#\mE^{\backslash\!\backslash}_{\mv_1\mv_2}=\#\mE^{\backslash\!\backslash}_{\mv_1\mv_3}=0,\ \#\mE^{\backslash\!\backslash}_{\mv_2\mv_3}=1,\qquad 
\#\mE^{\not\backslash\!\backslash}_{\mv_1\mv_2}=\#\mE^{\not\backslash\!\backslash}_{\mv_1\mv_3}=1,\ \#\mE^{\not\backslash\!\backslash}_{\mv_2\mv_3}=0
\]
and for $\mH_2$:
\[
\#\mE_{\mv_1}=\#\mE_{\mv_3}=\#\mE_{\mv_3}=1,\qquad 
\#\mE^{\backslash\!\backslash}_{\mv_1\mv_2}=\#\mE^{\backslash\!\backslash}_{\mv_1\mv_3}=\#\mE^{\backslash\!\backslash}_{\mv_2\mv_3}=1,\qquad 
\#\mE^{\not\backslash\!\backslash}_{\mv_1\mv_2}=\#\mE^{\not\backslash\!\backslash}_{\mv_1\mv_3}= \#\mE^{\not\backslash\!\backslash}_{\mv_2\mv_3}=0.
\]
\end{exa}

Given one or more directed hypergraphs, a few standard constructions can be performed.

\begin{defi}\label{def:union}
Given two directed hypergraphs $\mH_1=(\mV,\mE_1)$, $\mH_2=(\mV,\mE_2)$ with same vertex set, their \emph{union} (resp., \emph{intersection}) is the directed hypergraph with vertex set $\mV$ and hyperedge set $\mE_1\cup \mE_2$ (resp., $\mE_1\cap \mE_2$).
\end{defi}

\begin{defi} Given a directed hypergraph   $\mH$ with incidence matrix $\mathcal I$, the \emph{dual directed hypergraph to $\mH$} is the directed hypergraph whose incidence matrix is $\mathcal I^\top$. We denote it by $\mH^*$, and call the associated Laplacian, i.e., $\mathcal L_{\mH^*}:=\mathcal I^\top \mathcal I$, the \emph{dual Laplacian} to $\mathcal L$. 
\end{defi}

 Clearly, also this dual Laplacian is symmetric and positive semidefinite. It follows by elementary results in linear algebra that
\begin{equation}\label{eq:ranknul}
\dim \ker(\mathcal L_\mH)-\dim \ker(\mathcal L_{\mH^*})=\#\mV-\#\mE,
\end{equation}
see \cite[Corollary~14]{JosMul19}.
In the context of chemical physics, the dual Laplacian is associated with the so-called dual master equation~\cite{SriPolEsp23}.

\begin{rem}\label{rem:notation-after-josmul}
\begin{enumerate}
\item\label{remitem:josmul-1} Whenever a Laplacian on a directed graph is considered, its entries do not change if any edge is re-directed: therefore, each of the $2^{\#\mE}$ orientations of an undirected graph share the same Laplacian. If, however, one insists in considering graphs whose edges are not directed, one ends up with edges with two entrance endpoints and no exit endpoint (or vice versa, equivalently); this is the basic intuition behind the introduction of signless Laplacians \cite{Cve05}.

Things are similar in the case of hypergraph: As already observed in~\cite{JosMul19}, a direct computation yields for the entries of $\mathcal L$ the formula
\begin{equation}\label{eq:josmul-basic}
\begin{split}
\mathcal L_{\mv\mv}&=
\#\mE_\mv=\deg(\mv)\\
\mathcal L_{\mv\mw}&=\#\mE^{\backslash\!\backslash}_{\mv\mw}-\#\mE^{\not\backslash\!\backslash}_{\mv\mw}
\end{split}
\quad \hbox{for all }
\mv,\mw\in \mV;
\end{equation}
this shows that the Laplacian's off-diagonal entries (but not its diagonal entries!) may change whenever a single vertex is moved from $\metarg$ to $\mesour$, or vice versa. However, the Laplacian is invariant under swapping of hyperedge orientation, i.e., whenever  the roles of $\metarg$ and $\mesour$ are interchanged in a hyperedge $\me$.
%(At the risk or redundancy, we stress that only the edges to which both $\mv,\mw$ belong play a role in~\eqref{eq:josmul-basic}.)
\item\label{remitem:josmulrol}  Unlike $\mathcal L$, its dual Laplacian may well depend on the edge orientation of $\mH$; but, reasoning as in \autoref{remitem:josmul-1},  $\mathcal L_{\mH^*}$ is invariant under swapping of vertex role, i.e., interchanging the role of $\mesour$ and $\metarg$ for any hyperedge $\me$ does not affect $\mathcal L$.
\end{enumerate}
\end{rem}

Denote by
\[
\deg^{\mathrm{in}}(\mv)\quad\hbox{and}\quad \deg^{\mathrm{out}}(\mv)
\]
the number of hyperedges $\me$ for which $\mv\in \me_{\rm targ}$ and $\mv\in \me_{\rm sour}$, respectively.
In the special case of directed graphs, a classical double-counting argument yields the handshaking formula
\[
\sum_{\mv \in \mV}\deg^{\mathrm{in}}(\mv)=\sum_{\mv \in \mV}\deg^{\mathrm{out}}(\mv)=\#\mE.
\]
This can be immediately extended to directed hypergraphs as follows where, with a slight abuse of notation, we denote by $\deg(\me)$ the number of vertices that belong to $\me$, i.e., 
\[
\deg(\me):=\#\metarg+\#\mesour
\qquad\hbox{and accordingly}\qquad
\deg_{\max}(\mE):=\max_{\me\in\mE}\deg(\me).
\]

\begin{lemma}\label{lem:handsh}
Let $\mH=(\mV,\mE)$ be a directed hypergraph. Then
\[
\sum_{\mv \in \mV}\deg^{\mathrm{in}}(\mv)=\
\sum_{\me\in \mE}\#\metarg\quad\hbox{and}\quad
\sum_{\mv \in \mV}\deg^{\mathrm{out}}(\mv)=\
\sum_{\me\in \mE}\#\mesour.\]

In particular,
\[
\sum_{\mv\in\mV}\deg(\mv)
%=\sum_{\mv\in\mV}(\deg^{\mathrm{in}}(\mv)+\deg^{\mathrm{out}}(\mv))
%=\sum_{\me\in \mE}\left(\#\metarg+\#\mesour\right)
=\sum_{\me\in\mE}\deg(\me).
\]
\end{lemma}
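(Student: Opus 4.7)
The plan is to use a standard double-counting argument applied to the bipartite incidence relation between vertices and hyperedges, much as in the classical graph case, but separately for the source and target roles.

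First, I would consider the set of pairs $S^{\mathrm{in}}:=\{(\mv,\me)\in\mV\times\mE : \mv\in\metarg\}$. Counting $\#S^{\mathrm{in}}$ by first fixing $\mv$ and running over $\me$ gives, by the very definition of $\deg^{\mathrm{in}}(\mv)$, the sum $\sum_{\mv\in\mV}\deg^{\mathrm{in}}(\mv)$. Counting it the other way, by first fixing $\me$ and running over $\mv$, gives $\sum_{\me\in\mE}\#\metarg$. This yields the first identity. The second identity is established identically with $S^{\mathrm{out}}:=\{(\mv,\me)\in\mV\times\mE : \mv\in\mesour\}$ in place of $S^{\mathrm{in}}$, and using the disjointness $\mesour\cap\metarg=\emptyset$ guaranteed by \autoref{def:hyper} ensures that the two roles are being counted separately without ambiguity.

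For the ``in particular'' statement, I would observe that the splitting $\mE_\mv=\{\me : \mv\in\mesour\}\sqcup\{\me : \mv\in\metarg\}$ (again disjoint by \autoref{def:hyper}) gives
\[
\deg(\mv)=\#\mE_\mv=\deg^{\mathrm{in}}(\mv)+\deg^{\mathrm{out}}(\mv),
\]
while by definition $\deg(\me)=\#\metarg+\#\mesour$. Summing the two previously established identities and using these two decompositions yields
\[
\sum_{\mv\in\mV}\deg(\mv)=\sum_{\mv\in\mV}\bigl(\deg^{\mathrm{in}}(\mv)+\deg^{\mathrm{out}}(\mv)\bigr)=\sum_{\me\in\mE}\bigl(\#\metarg+\#\mesour\bigr)=\sum_{\me\in\mE}\deg(\me),
\]
which is the desired equality.

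There is no serious obstacle here: the argument is purely combinatorial Fubini on a finite product, and the only point to be mindful of is the disjointness of $\mesour$ and $\metarg$ built into \autoref{def:hyper}, which prevents any vertex from being counted twice within a single hyperedge.
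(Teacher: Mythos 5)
Your proof is correct and follows exactly the double-counting argument that the paper itself invokes (it states the lemma as an immediate extension of the classical handshaking argument for directed graphs and omits a written proof). The only notable point, which you handle properly, is splitting $\deg(\mv)=\deg^{\mathrm{in}}(\mv)+\deg^{\mathrm{out}}(\mv)$ using the disjointness $\mesour\cap\metarg=\emptyset$ from \autoref{def:hyper}.
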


In analogy with the case of graphs, we refer to the evolution equation
\[
\frac{\dd u}{\dd t}(t,\mv)=-\mathcal L u(t,\mv),\qquad t\ge 0,\ \mv \in \mV,
\]
as the \emph{heat flow} driven by $\mathcal L$; and to the semigroup $(\e^{-t\mathcal L})_{t\ge 0}$ that governs it as the \emph{heat semigroup} associated with $\mH$.

\begin{lemma}
Let $\mH$ be a directed hypergraph, and let $\mathcal L$ be the associated Laplacian. Then $(\e^{-t\mathcal L})_{t\ge 0}$ is a contractive, self-adjoint semigroup on $(\K^\mV,\|\cdot\|_2)$.
 \end{lemma}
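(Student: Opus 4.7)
My plan is to reduce everything to two elementary observations about the matrix $\mathcal L = \mathcal I \mathcal I^\top$, namely symmetry and positive semidefiniteness, and then invoke the spectral theorem for symmetric matrices.

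First, I would note that because $\mathcal L$ is a real $\mV\times\mV$-matrix with $\mathcal L^\top = (\mathcal I \mathcal I^\top)^\top = \mathcal I \mathcal I^\top = \mathcal L$, it is symmetric and hence self-adjoint with respect to the standard Euclidean inner product on $\K^\mV$. Since $t \mapsto \e^{-t\mathcal L}$ is defined via the (polynomial/entire) functional calculus applied to a self-adjoint matrix, each operator $\e^{-t\mathcal L}$ is again self-adjoint, and the semigroup law follows from $\e^{-(s+t)\mathcal L} = \e^{-s\mathcal L}\e^{-t\mathcal L}$, which is just the functional identity $\e^{-(s+t)x} = \e^{-sx}\e^{-tx}$ applied through the calculus.

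Second, for the contractivity, I would observe that $\mathcal L$ is positive semidefinite on $(\K^\mV, \|\cdot\|_2)$: for any $f\in \K^\mV$,
\[
\langle \mathcal L f, f\rangle = \langle \mathcal I\mathcal I^\top f, f\rangle = \langle \mathcal I^\top f, \mathcal I^\top f\rangle = \|\mathcal I^\top f\|_2^2 \ge 0.
\]
Thus the spectrum of $\mathcal L$ is contained in $[0, \infty)$, and by the spectral theorem (diagonalise $\mathcal L = U \diag(\lambda_1, \ldots, \lambda_{\#\mV}) U^\top$ with $\lambda_i \ge 0$) the semigroup has the representation $\e^{-t\mathcal L} = U \diag(\e^{-t\lambda_1}, \ldots, \e^{-t\lambda_{\#\mV}}) U^\top$. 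In particular its operator norm on $(\K^\mV, \|\cdot\|_2)$ equals $\max_i \e^{-t\lambda_i} \le 1$ for all $t \ge 0$, proving contractivity.

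There is no real obstacle here: the whole argument is just the observation that $\mathcal L$, being a Gram-type matrix, is automatically symmetric and positive semidefinite, so this lemma is essentially a linear-algebra warm-up that sets the stage for the finer positivity and contractivity questions studied in the later sections.
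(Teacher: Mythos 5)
Your proof is correct and follows exactly the same route as the paper, which simply notes that $\mathcal L=\mathcal I\mathcal I^\top$ is by construction symmetric and positive semidefinite; you merely spell out the Gram-matrix computation and the spectral-theorem consequence that the paper leaves implicit.
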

 
 \begin{proof}
By construction, $\mathcal L$ is symmetric and positive semi-definite.
\end{proof}

\begin{exa}\label{exa:graphyper-compare}
In literature, an undirected hypergraph is often replaced by its \textit{2-section}, i.e., by a graph where each hyperedge $\me$ is replaced by a collection of all edges between any vertex in $\me$. It would be natural to adapt this notion to the setting of \textit{directed} hypergraphs replacing any hyperedge $\me$ by a collection of all edges between any vertex in $\mesour$ and any vertex in $\metarg$. Let us illustrate with an example that the heat flow on these two structures present fundamental differences.

Consider the directed hypergraph $\mH$ on three vertices, $\mV=\{\mv_1,\mv_2,\mv_3\}$ that is given by the signed incidence matrix

\begin{minipage}{3cm}
%\begin{figure}[ht]
\begin{tikzpicture}[scale=.6, thick]

 % Coordinates
 \coordinate (v1) at (0,-.5);
 \coordinate (split) at (0,0.7);
 \coordinate (v2) at (-0.8,1.5);
 \coordinate (v3) at (0.8,1.5);

 % Draw edges (no arrows)
 \draw[thick] 
 (v1) .. controls +(0,0.4) and +(0,-0.4) .. (split);
 \draw[thick] 
 (split) .. controls +(-0.3,0.4) and +(0,-0.4) .. (v2);
 \draw[thick] 
 (split) .. controls +(0.3,0.4) and +(0,-0.4) .. (v3);

 % Large arrowhead at branching point
% \draw[-{Latex[length=4mm, width=3mm]}, thick]  ([shift={(0,-0.1)}]split) -- ([shift={(0,0.1)}]split);

 % Vertices
  \fill (v1) circle (2.5pt) ;
 \fill (v2) circle (2.5pt) ;
 \fill (v3) circle (2.5pt) ;

\node at (v1) [anchor=east] {$\mv_1$};
\node at (v2) [anchor=east] {$\mv_2$};
\node at (v3) [anchor=west] {$\mv_3$};

\end{tikzpicture}
%\end{figure}
\end{minipage}
\begin{minipage}{12.4cm}
\begin{equation}\label{eq:hyper3-a}
\mathcal I_\mH:=
\begin{pmatrix}
-1 \\ 1 \\ 1
\end{pmatrix},
\qquad \hbox{whence}\qquad
\mathcal L_\mH=
\begin{pmatrix}
1 & -1 & -1\\
-1 & 1 & 1\\
-1 & 1 & 1
\end{pmatrix}
\end{equation}
\end{minipage}

along with its 2-section $\mG$, which is given by the incidence matrix

\begin{minipage}{3cm}
\begin{tikzpicture}[scale=.6, thick]

 % Coordinates
 \coordinate (v1) at (0,-.5);
 \coordinate (split) at (0,0.7);
 \coordinate (v2) at (-0.8,1.5);
 \coordinate (v3) at (0.8,1.5);

 % Draw edges (no arrows)
 \draw[thick]  (v1) -- (v2);
 \draw[thick]  (v1) -- (v3);

 % Large arrowhead at branching point
% \draw[-{Latex[length=4mm, width=3mm]}, thick]  ([shift={(0,-0.1)}]split) -- ([shift={(0,0.1)}]split);

 % Vertices
 \fill (v1) circle (2.5pt)  ;
 \fill (v2) circle (2.5pt) ;
 \fill (v3) circle (2.5pt) ;

\node at (v1) [anchor=east] {$\mv_1$};
\node at (v2) [anchor=east] {$\mv_2$};
\node at (v3) [anchor=west] {$\mv_3$};

\end{tikzpicture}
\end{minipage}
\begin{minipage}{12.4cm}
\begin{equation}\label{eq:hyper3-b}
\mathcal I_\mG:=
\begin{pmatrix}
-1 & -1 \\ 1 & 0 \\ 0 & 1
\end{pmatrix}
\qquad \hbox{whence}\qquad
\mathcal L_\mG=
\begin{pmatrix}
2 & -1 & -1\\
-1 & 1 & 0\\
-1 & 0 & 1
\end{pmatrix}.
\end{equation}
\end{minipage}

One finds
\[
\e^{-t\mathcal L_\mH}=
\frac13
\begin{pmatrix}
2 + \e^{-3t} & 1-\e^{-3t}  & 1-\e^{-3t} \\
1 -\e^{-3t}  &  2+\e^{-3t}& \e^{-3t} - 1\\
1 -\e^{-3t} & \e^{-3t} - 1 & 2+\e^{-3t} 
 \end{pmatrix},\qquad t\ge 0,
\]
and
\[
\e^{-t\mathcal L_\mG}=
\frac{1}{6}
\begin{pmatrix}
4\e^{-3t} + 2 & 2-2\e^{-3t} & 2-2\e^{-3t} \\
2-2\e^{-3t}   & 2+3\e^{-t} + \e^{-3t}  & 2-3\e^{-t} + \e^{-3t}\\
2-2\e^{-3t}  & 2-3\e^{-t} + \e^{-3t} & 2+3\e^{-t} + \e^{-3t}
\end{pmatrix},\qquad t\ge 0.
\]

The behaviour of these semigroups is fundamentally different. For instance, imposing the initial condition $u(0,\cdot)=\begin{pmatrix}
0 & 1 & 0
\end{pmatrix}^\top$ we find the time evolution depicted in \autoref{fig:difference}: this shows that, unlike $(\e^{-t\mathcal L_\mG})$, the semigroup $(\e^{-t\mathcal L_\mH})$ is not positive, not $\infty$-contractive, not stochastic, not converging towards a consensus equilibrium -- indeed, converging towards a  \textit{dissensus} equilibrium.

\begin{figure}[ht]
\includegraphics[scale=.45]{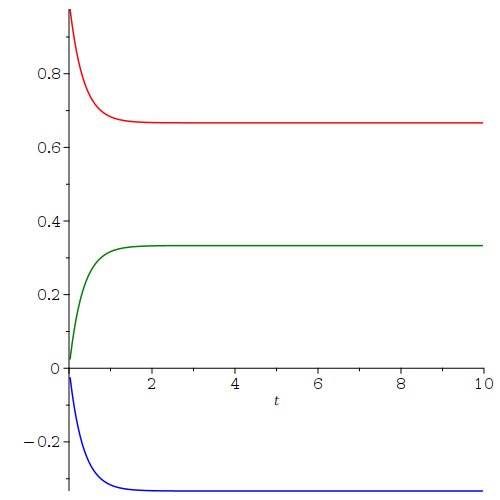} 
\includegraphics[scale=.45]{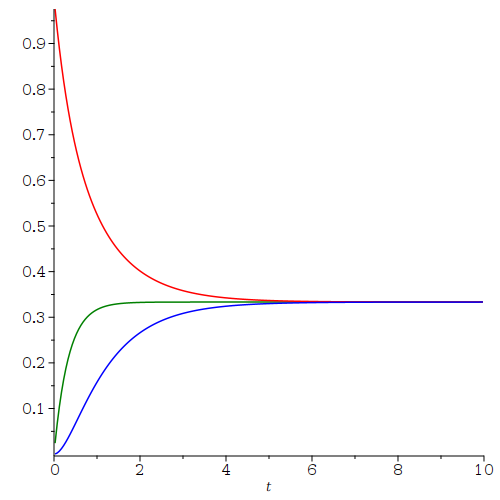} 
\caption{The time evolution driven by the semigroups $(\e^{-t\mathcal L_\mH})_{t\ge 0}$ (left) and $(\e^{-t\mathcal L_\mG})_{t\ge 0}$ (right) in \autoref{exa:graphyper-compare} for the initial condition 
$u_0=\begin{pmatrix}
0 & 1 & 0 
\end{pmatrix}^\top$. The green (resp., red, blue) curves depict the time evolution of $u(t,\mv)$ for $\mv=\mv_1$ (resp., $\mv=\mv_2$, $\mv=\mv_3$) and $t\ge 0$.
}
\label{fig:difference}.
\end{figure}
\end{exa}

Our aim in this article is to study the properties of $(\e^{-t\mathcal L})_{t\ge 0}$ in dependence of the structural properties of $\mH$. Let us mention the following, as an example.

\begin{prop}\label{prop:expstable-ex1}
Let $\mH'=(\mV,\mE_{\mH'})$ be the union of  a connected directed graph   $\mG=(\mV,\mE_{\mG})$ and a directed hypergraph  $\mH=(\mV,\mE_{\mH})$. Then the following assertions are equivalent:
\begin{enumerate}[(i)]
\item $\mathbf 1_{\#\mV}\in \ker \mathcal I^\top_{\mH}$;
\item $\ker \mathcal  L_{\mH'}$ is spanned by $\mathbf{1}_{\#\mV}$;
\item $\lim\limits_{t\to \infty}\e^{-t\mathcal L_{\mH'}}=\frac{1}{\#\mV}J_{\#\mV,\#\mV}$ in operator norm;
\item  $(\e^{-t\mathcal L_{\mH'}})_{t\ge 0}$ is not exponentially stable.
\end{enumerate}
\end{prop}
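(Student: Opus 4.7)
The plan is to observe that taking the union of hypergraphs with the same vertex set corresponds to concatenating columns of incidence matrices, so
\[
\mathcal I_{\mH'}=\begin{pmatrix}\mathcal I_\mG & \mathcal I_\mH\end{pmatrix}\quad\text{and}\quad \mathcal L_{\mH'}=\mathcal L_\mG+\mathcal L_\mH.
\]
Since $\mathcal L_{\mH'}=\mathcal I_{\mH'}\mathcal I_{\mH'}^\top$ is symmetric positive semidefinite, the identity $x^\top\mathcal L_{\mH'}x=\|\mathcal I_{\mH'}^\top x\|_2^2$ yields
\[
\ker\mathcal L_{\mH'}=\ker \mathcal I_{\mH'}^\top=\ker\mathcal I_\mG^\top\cap\ker \mathcal I_\mH^\top.
\]
The connectedness of the directed graph $\mG$ (which, via \autoref{remitem:josmul-1} of \autoref{rem:notation-after-josmul}, is an orientation-independent property of the underlying structure) gives the classical identity $\ker\mathcal I_\mG^\top=\mathrm{span}(\mathbf 1_{\#\mV})$. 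Hence
\[
\ker\mathcal L_{\mH'}=\mathrm{span}(\mathbf 1_{\#\mV})\cap\ker\mathcal I_\mH^\top,
\]
which equals $\mathrm{span}(\mathbf 1_{\#\mV})$ precisely when $\mathbf 1_{\#\mV}\in\ker\mathcal I_\mH^\top$, and equals $\{0\}$ otherwise. This settles the equivalence (i)$\Leftrightarrow$(ii).

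For (ii)$\Rightarrow$(iii), I would invoke the spectral theorem: because $\mathcal L_{\mH'}$ is self-adjoint and positive semidefinite, one writes $\mathcal L_{\mH'}=\sum_{k}\lambda_k P_k$ with $0=\lambda_0<\lambda_1\le\cdots$ (the strict inequality on $\lambda_1$ being ensured by (ii), which says $0$ is a simple eigenvalue), so
\[
\e^{-t\mathcal L_{\mH'}}=P_0+\sum_{k\ge 1}\e^{-t\lambda_k}P_k\xrightarrow[t\to\infty]{}P_0
\]
in operator norm, exponentially fast with rate $\lambda_1>0$. The orthogonal projector $P_0$ onto $\ker\mathcal L_{\mH'}=\mathrm{span}(\mathbf 1_{\#\mV})$ is explicitly $\tfrac{1}{\#\mV}\mathbf 1_{\#\mV}\mathbf 1_{\#\mV}^\top=\tfrac{1}{\#\mV}J_{\#\mV,\#\mV}$.

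The remaining implications are immediate: (iii)$\Rightarrow$(iv) because a nonzero norm-limit precludes exponential stability; and (iv)$\Rightarrow$(i) because, for a self-adjoint semigroup on a finite-dimensional space, failure of exponential stability forces $0\in\sigma(\mathcal L_{\mH'})$, i.e., $\ker\mathcal L_{\mH'}\ne\{0\}$, and then the characterisation of the kernel obtained in the first paragraph gives $\mathbf 1_{\#\mV}\in\ker\mathcal I_\mH^\top$. I do not anticipate a genuine obstacle here: the entire argument reduces to combining the column-concatenation identity for $\mathcal I_{\mH'}$ with the well-known structure of graph Laplacian kernels and with finite-dimensional spectral theory. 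The only point requiring mild care is to justify that the convergence in (iii) is in operator norm and occurs with an exponential rate, which comes for free from simplicity of $0$ as an eigenvalue of the self-adjoint operator $\mathcal L_{\mH'}$.
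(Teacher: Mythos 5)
Your proposal is correct and follows essentially the same route as the paper: the column-concatenation identity $\mathcal I_{\mH'}=\bigl(\mathcal I_\mG \;\; \mathcal I_\mH\bigr)$, the identification $\ker\mathcal L_{\mH'}=\ker\mathcal I_\mG^\top\cap\ker\mathcal I_\mH^\top$ together with $\ker\mathcal I_\mG^\top=\mathrm{span}(\mathbf 1_{\#\mV})$ for connected $\mG$, and the Spectral Theorem for the implications involving (iii) and (iv). The only cosmetic difference is that you phrase the kernel computation as an explicit intersection of kernels, whereas the paper writes it as the linear system \eqref{eq:ker-split}; the content is identical.
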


(Here and in the following
we denote by $J_{h,h}$ is the all-1 square matrix of size $h$.)

\begin{proof}
``(i)$\Rightarrow$ (ii)'' Clearly, $\ker \mathcal L_\mG =\ker\mathcal I^\top_\mG$ and $\ker \mathcal L_\mH =\ker\mathcal I^\top_\mH$.
 Since $\mG$ is connected, $\ker \mathcal L_\mG$ is spanned by the constant function $\mathbf{1}_{\#\mV}$.
Because the incidence matrix of $\mH'$ is 
\[
\mathcal I_{\mH'}=
\begin{pmatrix}
\begin{tabular}{c|c}
$\mathcal I_\mH$ & $\mathcal I_\mG$ 
\end{tabular} 
\end{pmatrix},
\]
$\mathbf 1\in \ker \mathcal I^\top_{\mH}$ implies that, in particular, $\mathbf 1\in \ker \mathcal I^\top_{\mH'}$, too. Furthermore, if $f\in \K^\mV$ is a function that lies in $\ker \mathcal L_{\mH'}=\ker \mathcal I^\top_{\mH'}$, a system
\begin{equation}\label{eq:ker-split}
\begin{cases}
\mathcal I_\mG^\top f=0,\\[4pt]
\mathcal I_\mH^\top f=0,
\end{cases}
\end{equation}
 of $\#\mE_{\mG}+\#\mE_{\mH}$ linear equations
must be satisfied: the first $\#\mE_{\mG}$ already force $f$ to be a constant function. This shows the claim.

``(ii)$\Rightarrow$(iii)'' This is an immediate consequence of the Spectral Theorem.

``(iii)$\Rightarrow$(iv)'' Obvious.

``(iv)$\Rightarrow$(i)'' Since the semigroup is not exponentially stable, by the Spectral Theorem $\lambda_1(\mathcal L_{\mH'})$ must vanish, i.e., $\ker \mathcal L_{\mH'}=\ker \mathcal I^\top_{\mH'}$ must be non-trivial. In other words, there is $f\in \K^\mV$ such that \eqref{eq:ker-split} is satisfied, i.e., such that both $\mathcal I^\top_\mG f=0$ (and hence $f$ is constant, since $\mG$ is connected) and $\mathcal I^\top_\mH f=0$.
\end{proof}

%%\begin{frame}{First observations}

\begin{exa}\label{exa:prototyp}
Let us present three classes of directed hypergraphs that do \emph{not} satisfy the condition (i) for $\mH$ in \autoref{prop:expstable-ex1} and, hence, such that the semigroup generated by $-\mathcal L_{\mH'}$ \emph{is} exponentially stable. In view of the characteristic block matrix structure of the corresponding Laplacians, they will play a role in later Sections, too.

(Here and in the following for $h,k\in \mathbb N_0$ we denote by
\[
\mathbf{1}_{h}\qquad\hbox{and}\qquad J_{h, k}
\]
the all-1 vector of length $h$ and 
the all-1  $h\times k$ matrix, respectively.)
\begin{enumerate}[(1)]
\item\label{item:onehyperedge} Directed hypergraph $\mH$ that consists of an individual hyperedge (i.e., $\mE_{\mH}=\{\me\}$):  then, up to renumbering of the vertices, 
\[
\mathcal I_\mH =\begin{pmatrix}
\begin{tabular}{c}
$-\mathbf{1}_{d_-}$ \\
\hline 
 $\mathbf{1}_{d_+}$ 
\end{tabular} 
\end{pmatrix}
\qquad\hbox{whence}\qquad
\mathcal L_\mH=
\begin{pmatrix}
\begin{tabular}{c|c}
$J_{d_-, d_-}$ & $- J_{d_-, d_+}$ \\ 
\hline 
$-J_{d_+, d_-}$ & $ J_{d_+, d_+}$ \\ 
\end{tabular} 
\end{pmatrix}
\]
where we write $d_-:=\#\mesour$ source endpoints and $d_+:=\#\metarg$ target endpoints.
Accordingly, $\mathcal I_{\mH}^\top\mathbf{1}= \#\metarg-\#\mesour$, so the requirement in \autoref{prop:expstable-ex1} is satisfied if and only if $\#\mesour= \#\metarg$. Also, observe that the eigenvalues of $\mathcal L_\mH$ are $0^{\{\#\mV-1\}},\#\mV$ (\cite[Lemma~15]{JosMul19}) and it can be proved by induction
%cg
that its null space is spanned by the vectors 
%\footnote{\color{red}check! non sembra giusto sempre, p.e. se $d_-=1$, $d_+=3$.}
\begin{equation}\label{eq:basisv-271-1}
\frac{e_i-e_{i+1}}{\sqrt{2}},\ i\in \{1,\ldots,d_- -1\},\qquad\hbox{whenever }d_-\ge 2,
\end{equation}
and
\begin{equation}\label{eq:basisv-271-2}
\frac{e_{d_- +j}-e_{d_- +j+1}}{\sqrt{2}},\ i\in \{1,\ldots,d_+ -1\},\qquad\hbox{whenever }d_+\ge 2,
\end{equation}
where $e_h$ denotes the $h$-th vector of the canonical basis of $\K^{\#\mV}$, along with
%\footnote{\color{red}controllare tutto questo!!!}
\begin{equation}\label{eq:basisv-271-3}
\begin{pmatrix}
\begin{tabular}{c}
$\sqrt{\frac{d_+}{\#\mV\cdot d_-}}\mathbf{1}_{d_-}$ \\[5pt]
\hline \\[-9pt]
 $\sqrt{\frac{d_-}{\#\mV \cdot d_+}}\mathbf{1}_{d_+}$ 
\end{tabular} 
\end{pmatrix}\qquad\hbox{whenever }d_-\ge 1,\ d_+\ge 1.
\end{equation}
The corresponding orthogonal projector is
\begin{equation}\label{eq:1vector-proj}
P_{\ker \mathcal L}=
\frac{1}{\#\mV}\begin{pmatrix}
\begin{tabular}{c|c}
$\#\mV\cdot\Id_{d_-}-J_{d_-, d_-}$ & $ J_{d_-, d_+}$ \\ 
\hline 
$J_{d_+, d_-}$ & $\#\mV\cdot \Id_{d_+}- J_{d_+, d_+}$ \\ 
\end{tabular} 
\end{pmatrix}.
\end{equation}

\item\label{item:hypersignless} Directed hypergraphs with same number $\#\mV$ of vertices and hyperedges, and such that $\metarg=\emptyset$ or $\mesour=\emptyset$  in each $\me\in \mE_{\mH}$: then, 
\[
\mathcal I_\mH =
\mathbf{1}_{\#\mV}
\;\;\hbox{or}\;\;
=-\mathbf{1}_{\#\mV}
\qquad\hbox{whence}\qquad
\mathcal L_\mH=
\#\mV\cdot J_{\#\mV, \#\mV}
\]
and, accordingly, $\mathcal I_{\mH}^\top\mathbf{1}= \#\mV\cdot \mathbf{1}$ or $=-\#\mV\cdot \mathbf{1}$, so the requirement in \autoref{prop:expstable-ex1} is always satisfied. Also, the eigenvalues of $\mathcal L_\mH$ are $0^{\{\#\mV-1\}},\#\mV^2$ and its null space is spanned by the vectors
\[
\varphi_i = \frac{1}{\sqrt{i(i+1)}} 
\begin{pmatrix}
\underbrace{1,1,\dots,1}_{i \text{ entries}}, -i, 0, \dots, 0
\end{pmatrix}^\top,
\qquad i = 1, 2, \dots, \#\mV-1.
\]
Accordingly, the corresponding orthogonal projector is $\Id_{\#\mV}-\frac{1}{\#\mV}J_{\#\mV, \#\mV}$.

\item\label{item:hyperrotat} Directed hypergraphs with same number $\#\mV$ of vertices and hyperedges, and such that $\mv_i$ is source in $\me_i$, $i=1,\ldots,\#\mV$, and target in all other hyperedges: then 
\[
\mathcal I_\mH=
\begin{pmatrix}
J_{\#\mV, \#\mV}-2\Id_{\#\mV}
\end{pmatrix}
\qquad\hbox{whence}\qquad
\mathcal L_\mH=4\Id_{\#\mV}+(\#\mV-4)J_{\#\mV, \#\mV}
\]
and, accordingly, $\mathcal I_{\mH}^\top \mathbf{1}= (\#\mV-2)\mathbf{1}$, so the requirement in \autoref{prop:expstable-ex1} is satisfied if and only if $\#\mV\ge 3$.

Also, the eigenvalues of $\mathcal L_\mH$ are $4^{\{\#\mV-1\}},(\#\mV-2)^2$  (\cite[Lemma~24]{JosMul19}). This exemplifies a major difference between graphs and directed hypergraphs; namely, in the latter case a strictly positive ground state energy may be achieved even without a potential or Dirichlet boundary conditions.

 If $\#\mV=4$, the multiplicity of the eigenvalue 4 corresponds to the dimension of the whole space, hence the orthogonal projector onto the corresponding eigenspace is the identity matrix $\Id_4$. For $\#\mV\ge 5$, one sees that the orthogonal projector onto the eigenspace corresponding to the eigenvalue 4 is $\Id_{\#\mV}-\frac{1}{\#\mV}J_{\#\mV,\#\mV}$.
\end{enumerate}

These directed hypergraphs also show that $\sum_{\mw\ne \mv}(\#\mE^{\backslash\!\backslash}_{\mv\mw}+\#\mE^{\not\backslash\!\backslash}_{\mv\mw})$ does not necessarily agree with $\deg(\mv)$. We also stress that $\mathcal L_{\mH}$ is diagonal (hence in particular not irreducible) if $\#\mV=4$.
\end{exa}

Let us also observe that
\[
|\mathcal L_{\mv\mw}|= |\#\mE^{\backslash\!\backslash}_{\mv\mw}-\#\mE^{\not\backslash\!\backslash}_{\mv\mw}|\le \#\mE^{\backslash\!\backslash}_{\mv\mw}+\#\mE^{\not\backslash\!\backslash}_{\mv\mw}=\deg(\mv)\qquad\hbox{for all }\mv\in\mV\hbox{ and all }\mw\ne \mv;
\]
summing over all $\mw\ne \mv$ we obtain the rough estimate
\begin{equation*}\label{eq:gerschrigho}
\sum_{\mw\ne\mv}|\mathcal L_{\mv\mw}|\le \left(\#\mV-1\right)\deg(\mv)\qquad \hbox{for all }\mv\in\mV,
\end{equation*}
which can be refined as follows.
\begin{lemma}\label{lem:diag-semidom}
Let $\mH$ be a directed hypergraph, and $\mathcal L$ be the associated Laplacian. Then
\begin{equation}\label{eq:gerschrigho-2}
\sum_{\substack{\mw\in\mV \\ \mw\neq \mv}}|\mathcal L_{\mv\mw}|\le \sum_{\me\in \mE_\mv} (\deg(\me) - 1)\qquad \hbox{for all }\mv\in\mV.
\end{equation}
\end{lemma}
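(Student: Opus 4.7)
The plan is to combine the triangle inequality applied to formula \eqref{eq:josmul-basic} with a standard double-counting argument.

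First I would start from the explicit expression $\mathcal L_{\mv\mw} = \#\mE^{\backslash\!\backslash}_{\mv\mw} - \#\mE^{\not\backslash\!\backslash}_{\mv\mw}$ for $\mw \ne \mv$ given in \eqref{eq:josmul-basic}, and invoke the triangle inequality to obtain
\[
|\mathcal L_{\mv\mw}| \;\le\; \#\mE^{\backslash\!\backslash}_{\mv\mw} + \#\mE^{\not\backslash\!\backslash}_{\mv\mw}.
\]
Since the sets $\mE^{\backslash\!\backslash}_{\mv\mw}$ and $\mE^{\not\backslash\!\backslash}_{\mv\mw}$ are disjoint and their union is precisely the collection of hyperedges containing both $\mv$ and $\mw$, this right-hand side equals $\#(\mE_\mv \cap \mE_\mw)$.

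Next I would sum over $\mw \in \mV$ with $\mw \ne \mv$ and interchange the order of summation:
\[
\sum_{\substack{\mw \in \mV \\ \mw \ne \mv}} |\mathcal L_{\mv\mw}|
\;\le\; \sum_{\substack{\mw \in \mV \\ \mw \ne \mv}} \#(\mE_\mv \cap \mE_\mw)
\;=\; \sum_{\me \in \mE_\mv} \#\bigl\{\mw \in \mV : \mw \ne \mv,\ \me \in \mE_\mw\bigr\}.
\]
For a fixed $\me \in \mE_\mv$, the inner cardinality counts the vertices of $\me$ other than $\mv$; by definition of $\deg(\me) = \#\mesour + \#\metarg$, this number equals $\deg(\me) - 1$. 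Assembling the pieces yields the asserted bound \eqref{eq:gerschrigho-2}.

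There is no real obstacle here; the argument is essentially a handshaking-style double count in the spirit of \autoref{lem:handsh}, and the only subtle point is to notice that the naive bound $|\mathcal L_{\mv\mw}| \le \deg(\mv)$ used to derive the preceding rough estimate is too wasteful, whereas controlling $|\mathcal L_{\mv\mw}|$ by $\#(\mE_\mv \cap \mE_\mw)$ is tight enough for the refined inequality to fall out after exchanging summations.
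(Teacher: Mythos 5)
Your proof is correct and follows essentially the same route as the paper: the paper applies the triangle inequality to $\mathcal L_{\mv\mw}=\sum_{\me\in\mE_\mv}\iota_{\mv\me}\iota_{\mw\me}$ and then exchanges the sums over $\mw$ and $\me$, which is exactly your bound $|\mathcal L_{\mv\mw}|\le \#(\mE_\mv\cap\mE_\mw)$ followed by the same double count, since $\sum_{\me\in\mE_\mv}|\iota_{\mv\me}\iota_{\mw\me}|=\#(\mE_\mv\cap\mE_\mw)$. The only difference is cosmetic: you phrase the estimate via the cardinality formula \eqref{eq:josmul-basic}, while the paper works directly with the incidence entries.
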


\begin{proof}
By the triangle inequality we find for all $\mv\in\mV$
\[
\begin{split}
\sum_{\substack{\mw\in\mV \\ \mw\neq \mv}}|{\mathcal L}_{\mv\mw}|
= \sum_{\substack{\mw\in\mV \\ \mw\neq \mv}}\left|\sum_{\me\in \mE}\iota_{\mv\me}\iota_{\mw\me}\right|
&= \sum_{\substack{\mw\in\mV \\ \mw\neq \mv}}\left|\sum_{\me\in \mE_\mv}\iota_{\mv\me}\iota_{\mw\me}\right|\\
&\le \sum_{\substack{\mw\in\mV \\ \mw\neq \mv}}\sum_{\me\in \mE_\mv}|\iota_{\mv\me}\iota_{\mw\me}|
= \sum_{\me\in \mE_\mv} \sum_{\substack{\mw\in \mV\\ \mw\ne \mv}}|\iota_{\mw\me}|=\sum_{\me\in \mE_\mv}(\deg(\me) - 1).\end{split}
\]
This completes the proof.
\end{proof}

The proof shows that equality holds in \eqref{eq:gerschrigho-2} if and only if 
%$\iota_{\mv\me}\iota_{\mw\me}$ have the same sign, i.e.,  if and only if 
for each $\mv,\mw\in\mV$
%in each hyperedge $\me\in\mE$
\begin{itemize}
\item  either $\iota_{\mv\me}\iota_{\mw\me}\equiv -1$
for each $\me\in \mE_\mv\cap \mE_\mw$
\item or $\iota_{\mv\me}\iota_{\mw\me}\equiv 1$ for each $\me\in \mE_\mv\cap \mE_\mw$.
\end{itemize}
This is the case, in particular, if $\mH$ is a (directed multi)graph (i.e., both $\mesour,\metarg$ are singletons for each hyperedge $\me$), or if any two vertices are co-oriented in each hyperedge, but not only, as the example of the incidence matrix 
\[
\begin{pmatrix}
-1 & -1\\
1 & 1\\
0 & 1
\end{pmatrix}
\]
shows.

\begin{rem}\label{rem:misc}
\begin{enumerate}[(1)]
\item In \cite{JosMul19}, the condition $\me_{\rm sour}\cap \me_{\rm targ}=\emptyset$ is sometimes relaxed: the case $\me_{\rm sour}\cap \me_{\rm targ}\not=\emptyset$ is then incorporated in the general theory by letting $\iota_{\mv\me}:=0$ for each such $\mv\in \me_{\rm sour}\cap \me_{\rm targ}$, and $\mH$ is then called a \textit{chemical hypergraph}. In this context, the elements of $\me_{\rm sour}\cap \me_{\rm targ}$ are interpreted as catalyst of a chemical reaction. However interesting for modelling purposes, allowing for such catalyst vertices does hardly change the mathematical theory: for the sake of simplicity, catalysts are not considered in this article.

\item Let us call a directed hypergraph $\mH$ \emph{proper} if both $h\ne 0$ and $k\ne 0$, i.e., if for all $\me\in \mE$ $\me_{\rm sour}\ne\emptyset\ne \me_{\rm targ}$.
For modelling purposes in chemical reactions, the pre-condition that the hypergraph is proper is frequently imposed in the literature since \cite{JosMul19}, but for our purposes it seems to be an unnecessary restriction.

Indeed, if each hyperedge $\me$ has the property that either $\mesour=\emptyset$ or $\metarg=\emptyset$ for all $\me\in\mE$ (as e.g.\ in \autoref{exa:prototyp}.(\ref{item:hypersignless})), then 
%\eqref{eq:josmul-basic} shows that all entries of $\mathcal L$ are nonnegative. In this case, 
$\mathcal I$ is the hypergraph counterpart on the signless incidence matrix of a graph and it is natural to regard $\mathcal L$ as the hypergraph generalization of a graph's signless Laplacian. This is all the more natural since each \textit{undirected} hypergraph can be regarded as an incidence structure, and vice versa. In other words, considering a directed hypergraph $\mH$ with $ \metarg=\emptyset$ or  $ \mesour=\emptyset$  for all $\me\in\mE$ -- and hence imposing that any two vertices are co-oriented in all hyperedges -- merely amounts to considering the case where $\mH$ is an incidence structure. 
\item A few crucial differences between Laplacians on (directed) graphs and directed hypergraphs can be illustrated drawing on the 
\autoref{exa:prototyp}:
%example of the directed hypergraph with signed incidence matrix
%\begin{equation}\label{eq:hyper3a-decor}
%\mathcal I_2= \begin{pmatrix}
%-1 & 0\\
%1 & -1\\
%1 & 1
%\end{pmatrix}\quad\hbox{whence}\quad 
%\mathcal L_2=
%\begin{pmatrix}
%1 & -1 & -1\\
%-1 & 2 & 0\\
%-1 & 0 & 2
%\end{pmatrix},
%\end{equation}
%whose eigenvalues are $0,2,3$ with null space spanned by $\frac{1}{\sqrt{6}}\begin{pmatrix}
%2 & 1 & 1
%\end{pmatrix}^\top$.
\begin{itemize}
\item Possibly, $\mathcal L_{\mv\mw}\ge 0$ even if the vertices $\mv,\mw$ belong to a common hyperedge (\autoref{exa:prototyp}.(\ref{item:hyperrotat}), $\#\mV=4$).

%: indeed, 
%\[
%%\mathcal L=
%\begin{pmatrix}
%1 & -1 & -1\\
%-1 & 2 & 0\\
%-1 & 0 & 2
%\end{pmatrix}
%\begin{pmatrix}
%2 \\ 1 \\ 1
%\end{pmatrix}=0.
%\]
\item Possibly, $\sum\limits_{\mw\ne \mv}|\mathcal L_{\mv\mw}|\not\le \deg(\mv)$,
i.e., $\mathcal L$ is generally not a diagonally dominant matrix (\autoref{exa:prototyp}.(\ref{item:onehyperedge})).
\item Possibly, $\mathcal L\mathbf{1}\ne 0$ even when $0$ is an eigenvalue (\autoref{exa:prototyp}.(\ref{item:onehyperedge}), $\#\metarg\ne\#\mesour$).
\end{itemize}
Indeed, appropriate modification of the Laplacian that allow for weightings of the vertices in such a way that constant functions are always in the null space of the Laplacian, and hence  stationary solutions for the heat flow, have been devised in~\cite{FazTenLuk24,ShaTiaZha25}.
\end{enumerate}
\end{rem}

%Let $\mH$ be a directed hypergraph. By taking the transpose of its incidence matrix, we obtain a matrix that qualifies as the signed incidence matrix of another directed hypergraph.

%We are going to focus on Laplacians of a special class of dual hypergraphs in Section~\ref{sec:graphypergra}.

\section{Spectral Theory}\label{sec:spectheor}

Given any directed hypergraph $\mH=(\mV,\mE)$, the spectrum of its Laplacian consists of $\#\mV$ real eigenvalues, which we denote by
\[
\lambda_1(\mH),\ldots,\lambda_{\#\mV}(\mH),
\]
in non-decreasing order.

\begin{lemma}\label{lem:gersh0}
Let $\mH$ be a directed hypergraph. Then,  for both the Laplacian $\mathcal L_{\mH}$ and the dual Laplacian $\mathcal L_{\mH^*}$, all eigenvalues  are nonnegative and the sum of all eigenvalues agrees with $\sum\limits_{\me\in\mE}\deg(\me)=\sum\limits_{\mv\in\mV}\deg(\mv)$.

If $\#\mV\ge \#\mE$, then the dimension of the null space of the Laplacian $\mathcal L_{\mH}$ is at least $\#\mV-\#\mE$. If $\#\mE\ge \#\mV$, then the dimension of the null space of the dual Laplacian $\mathcal L_{\mH^*}$ is at least $\#\mE-\#\mV$.
\end{lemma}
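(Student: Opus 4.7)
The plan is to exploit the Gram-matrix representations $\mathcal L_\mH = \mathcal I \mathcal I^\top$ and $\mathcal L_{\mH^*}=\mathcal I^\top \mathcal I$ together with the handshaking lemma (\autoref{lem:handsh}), so the statement reduces to bookkeeping plus elementary linear algebra. There is no serious obstacle; the only mild subtlety is that the two trace computations pick up different summation indices, and one has to invoke \autoref{lem:handsh} to identify them.

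First I would treat nonnegativity. Since $\mathcal L_\mH=\mathcal I\mathcal I^\top$, for every $f\in \K^\mV$ one has $\langle \mathcal L_\mH f,f\rangle = \|\mathcal I^\top f\|_2^2 \ge 0$, so $\mathcal L_\mH$ is positive semidefinite; the identical argument applied to $\mathcal L_{\mH^*}=\mathcal I^\top \mathcal I$ gives the nonnegativity of its eigenvalues as well.

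Next, I would compute the trace. Using the description of the diagonal entries of $\mathcal L_\mH$ in \eqref{eq:josmul-basic}, the trace of $\mathcal L_\mH$ equals
\[
\Trace(\mathcal L_\mH)=\sum_{\mv\in \mV}\mathcal L_{\mv\mv}=\sum_{\mv\in \mV}\deg(\mv).
\]
On the other hand, the diagonal entry of $\mathcal L_{\mH^*}=\mathcal I^\top\mathcal I$ at $(\me,\me)$ equals $\sum_{\mv\in \mV}\iota_{\mv\me}^2=\#\mesour+\#\metarg=\deg(\me)$, whence
\[
\Trace(\mathcal L_{\mH^*})=\sum_{\me\in \mE}\deg(\me).
\]
By the standard identity $\Trace(\mathcal I\mathcal I^\top)=\Trace(\mathcal I^\top\mathcal I)$ -- equivalently, by \autoref{lem:handsh} -- both traces coincide, and since for symmetric matrices the trace equals the sum of the eigenvalues, the claim on the spectra follows.

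Finally, I would bound the nullities by a rank argument. Since $\ker(\mathcal I\mathcal I^\top)=\ker(\mathcal I^\top)$ and $\ker(\mathcal I^\top\mathcal I)=\ker(\mathcal I)$, one has $\mathrm{rank}(\mathcal L_\mH)=\mathrm{rank}(\mathcal L_{\mH^*})=\mathrm{rank}(\mathcal I)\le\min\{\#\mV,\#\mE\}$. Therefore
\[
\dim\ker(\mathcal L_\mH)=\#\mV-\mathrm{rank}(\mathcal I)\ge \#\mV-\#\mE\qquad \hbox{if }\#\mV\ge \#\mE,
\]
and symmetrically $\dim\ker(\mathcal L_{\mH^*})=\#\mE-\mathrm{rank}(\mathcal I)\ge \#\mE-\#\mV$ if $\#\mE\ge \#\mV$. (Equivalently, these inequalities are a direct consequence of \eqref{eq:ranknul} together with the nonnegativity of the nullities just established.) This completes all three assertions.
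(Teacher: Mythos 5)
Your proof is correct and follows essentially the same route as the paper: positive semidefiniteness from the Gram structure $\mathcal I\mathcal I^\top$, the trace computed via \eqref{eq:josmul-basic} and \autoref{lem:handsh}, and the nullity bounds via rank–nullity applied to $\mathcal I$. The only cosmetic difference is that you compute $\Trace(\mathcal L_{\mH^*})$ directly from its diagonal entries, whereas the paper transfers the statement to the dual Laplacian by noting that the non-zero eigenvalues of $\mathcal I\mathcal I^\top$ and $\mathcal I^\top\mathcal I$ coincide; both arguments are equally valid.
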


We stress that the first two assertions do not depend on the sets $\mesour,\metarg$, but only on $\me$: i.e., the sign of all eigenvalues and the value of their sum are two properties that are shared by all directed versions of the same undirected hypergraph.

\begin{proof}
 All eigenvalues are nonnegative, since $\mathcal L$ is positive semidefinite. The second assertion follows computing the trace of $\mathcal L$, using \eqref{eq:josmul-basic} and \autoref{lem:handsh}. Because the non-zero eigenvalues of the Laplacian $\mathcal L_{\mH}=\mathcal I\mathcal I^\top$ and of the dual Laplacian $\mathcal L_{\mH^*}=\mathcal I^\top \mathcal I$ agree, these assertions hold for the eigenvalues of the dual Laplacian, too.
 
 Finally, the last assertion follows from the Rank--Nullity theorem: denoting by $r$ the rank of $\mathcal I$, we find that $\dim\ker(\mathcal L)=\#\mV-r\ge \#\mV-\#\mE$, and likewise for the dual Laplacian.
\end{proof}

It is possible to obtain rough bounds on the eigenvalues of Laplacians on directed hypergraphs.

\begin{prop}\label{lem:gersh1}
Let $\mH=(\mV,\mE)$ be a directed hypergraph. Then the following assertions hold.

\begin{enumerate}[(1)]
\item 
Each eigenvalue $\lambda$ of $\mathcal L$ satisfies
\[
\deg_{\min}(\mV)-\max_{\mv\in\mV} \sum_{\substack{\mw\in\mV\\ \mw\ne \mv}}|\#\mE^{\backslash\!\backslash}_{\mv\mw}-\#\mE^{\not\backslash\!\backslash}_{\mv\mw}|\le\lambda \le \deg_{\max}(\mV)+\max_{\mv\in\mV}\sum_{\substack{\mw\in\mV\\ \mw\ne \mv}}|\#\mE^{\backslash\!\backslash}_{\mv\mw}-\#\mE^{\not\backslash\!\backslash}_{\mv\mw}|
\]
as well as
\[
\begin{split}
\lambda \le\deg_{\max}(\mE)+ \max_{\me\in\mE} \sum_{\substack{\mf\in\mE\\ \mf\ne \me}}\left|\#(\metarg\cap \mftarg)+\#(\mesour\cap \mfsour)-\#(\metarg\cap \mfsour)-\#(\mesour\cap \metarg)\right|.
\end{split}
\]
In particular, if for each $\mv\in \mV$
\[
\deg(\mv)> \sum_{\substack{\mw\in\mV\\ \mw\ne \mv}}|\#\mE^{\backslash\!\backslash}_{\mv\mw}-\#\mE^{\not\backslash\!\backslash}_{\mv\mw}|
\]
then 0 is not an eigenvalue and $(\e^{-t\mathcal L})_{t\ge 0}$ is exponentially stable.

\item Each eigenvalue $\lambda$ of $\mathcal L$ satisfies
\begin{equation}\label{eq:firstgershc}
2\deg_{\min}(\mV)-\max_{\mv\in \mV} \sum_{\me\in \mE_\mv}\deg(\me)\le \lambda \le \max_{\mv\in \mV} \sum_{\me\in \mE_\mv}\deg(\me)
\end{equation}
as well as
\begin{equation}\label{eq:secondgershc}
\lambda \le
\max_{\me\in \mE} \sum_{\mv\in \me} \deg(\mv).
\end{equation}
In particular, if for each $\mv\in \mV$
\[
2\deg(\mv)> \sum_{\me\in \mE_\mv} \deg(\me)
%\sum_{\mw\ne\mv}|\#\mE^{\backslash\!\backslash}_{\mv\mw} - \# \mE^{\not\backslash\!\backslash}_{\mv\mw}|,
\]
%or else, that for each $\me\in\mE$
%\[
%2\deg(\me)> \sum_{\mv\in \me} \deg(\mv).
%\]
then 0 is not an eigenvalue and $(\e^{-t\mathcal L})_{t\ge 0}$ is exponentially stable.
\end{enumerate}
\end{prop}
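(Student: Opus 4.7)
The plan is to apply the Gershgorin circle theorem twice — once to $\mathcal L = \mathcal I\mathcal I^\top$ itself, and once to the dual Laplacian $\mathcal L_{\mH^*} = \mathcal I^\top \mathcal I$ — exploiting that the non-zero parts of their spectra coincide (any surplus eigenvalues guaranteed by \autoref{lem:gersh0} are $0$, hence trivially contained in every non-negative Gershgorin disc). Since $\mathcal L$ is positive semi-definite, we only need two-sided bounds from $\mathcal L$ itself and upper bounds from $\mathcal L_{\mH^*}$.

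For (1) I would simply read off the entries of $\mathcal L$ from \eqref{eq:josmul-basic}: the diagonal at $\mv$ equals $\deg(\mv)$ and the off-diagonal at $(\mv,\mw)$ equals $\#\mE^{\backslash\!\backslash}_{\mv\mw}-\#\mE^{\not\backslash\!\backslash}_{\mv\mw}$, whose absolute value enters the Gershgorin row sum; the displayed two-sided bound then follows by taking minima/maxima over $\mv$. For the hyperedge-side upper bound I would compute $(\mathcal L_{\mH^*})_{\me\mf}=\sum_{\mv\in\mV}\iota_{\mv\me}\iota_{\mv\mf}$: the diagonal at $\me$ equals $\sum_{\mv\in\me}\iota_{\mv\me}^2=\deg(\me)$, while the off-diagonal splits according to how each $\mv$ sits in each of $\me,\mf$, giving the stated signed combination of intersection cardinalities. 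A second application of Gershgorin, now to $\mathcal L_{\mH^*}$, hands the upper bound back to the spectrum of $\mathcal L$.

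For (2), the key input on the vertex side is already \autoref{lem:diag-semidom}, which bounds the off-diagonal row sum by $\sum_{\me\in \mE_\mv}(\deg(\me)-1)$. Combined with $\deg(\mv)=\#\mE_\mv$, the Gershgorin bounds collapse cleanly: $\deg(\mv)+\sum_{\me\in\mE_\mv}(\deg(\me)-1)=\sum_{\me\in\mE_\mv}\deg(\me)$ (upper) and $\deg(\mv)-\sum_{\me\in\mE_\mv}(\deg(\me)-1)=2\deg(\mv)-\sum_{\me\in\mE_\mv}\deg(\me)$ (lower); coarsening the latter via $\min_\mv \deg(\mv)\ge \deg_{\min}(\mV)$ yields \eqref{eq:firstgershc}. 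For \eqref{eq:secondgershc} I would run a verbatim copy of the proof of \autoref{lem:diag-semidom} with the roles of vertices and hyperedges swapped — i.e.\ $\sum_{\mf\ne\me}|(\mathcal L_{\mH^*})_{\me\mf}| \le \sum_{\mv\in\me}(\deg(\mv)-1)$ — and then Gershgorin gives $\lambda\le \deg(\me)+\sum_{\mv\in\me}(\deg(\mv)-1)=\sum_{\mv\in\me}\deg(\mv)$, using $\#\{\mv:\mv\in\me\}=\deg(\me)$. The exponential-stability corollaries come for free: whenever the lower Gershgorin bound is strictly positive, $\lambda_1(\mH)>0$ and $\|\e^{-t\mathcal L}\|$ decays like $\e^{-\lambda_1(\mH) t}$.

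There is no genuine obstacle beyond careful sign-bookkeeping when expanding the products $\iota_{\mv\me}\iota_{\mv\mf}$ and $\iota_{\mv\me}\iota_{\mw\me}$. The one pitfall worth noting is the temptation to bound $|\#\mE^{\backslash\!\backslash}_{\mv\mw}-\#\mE^{\not\backslash\!\backslash}_{\mv\mw}|$ in (1) prematurely by $\#\mE^{\backslash\!\backslash}_{\mv\mw}+\#\mE^{\not\backslash\!\backslash}_{\mv\mw}$, as doing so would degrade (1) into the strictly coarser bound of (2) and discard precisely the sign cancellations that distinguish directed hypergraphs from their signless relaxations.
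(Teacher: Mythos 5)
Your proposal is correct and follows essentially the same route as the paper: Ger\v{s}gorin applied to $\mathcal L=\mathcal I\mathcal I^\top$ and to the dual Laplacian $\mathcal I^\top\mathcal I$ (using co-spectrality away from $0$), with \autoref{lem:diag-semidom} and its vertex/hyperedge-swapped analogue supplying the row-sum bounds in (2). Your explicit remark that any extra zero eigenvalues of $\mathcal L$ trivially satisfy the nonnegative upper bounds coming from the dual is a small point the paper leaves implicit, but the argument is otherwise identical.
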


Observe that \eqref{eq:firstgershc} returns the classical estimate
\begin{equation}\label{eq:firstgershc-graph}
\lambda\le 2\deg_{\max}(\mV)
\end{equation}
whenever $\mathcal L$ is the Laplacian or the signless Laplacian on a graph $\mG$.

\begin{proof}

(1) By Ger\v{s}gorin’s theorem, every eigenvalue $\lambda$ of $\mathcal L=\mathcal I\mathcal I^\top$ lies in an interval of centre $\deg(\mv)$ and radius $\sum_{\substack{\mw\in\mV\\ \mw\ne\mv}}|\mathcal L_{\mv\mw}|$ for some $\mv\in \mV$. Now, the absolute value of the off-diagonal $\mv-\mw$ entries of $\mathcal L$ are
\[
\left|\sum_{\me\in \mE}\iota_{\mv\me}\iota_{\mw\me}\right|=|\#\mE^{\backslash\!\backslash}_{\mv\mw}-\#\mE^{\not\backslash\!\backslash}_{\mv\mw}|
\]
and the first claim follows.

Let us now denote by $\mu_{\me\mf}$ the $\me-\mf$ entry of the dual Laplacian $\mathcal I^\top\mathcal I$.
Each column $\me$ of $\mathcal I$ has $\deg(\me)$ non-zero entries $\pm 1$, so
\(
\mu_{\me\me} = \deg(\me).
\)
Moreover, for $\mf\neq \me$,
\(
\mu_{\me\mf} = \sum\limits_{\mv\in \mV} \iota_{\mv \me}\iota_{\mv \mf}
\)
is non-zero only when $\me$ and $\mf$ share vertices.
Again by Ger\v{s}gorin's Theorem, each eigenvalue of $\mathcal I^\top\mathcal I$ lies in an interval 
\[
\hbox{centred at }\deg(\me)\quad \hbox{ with radius }\sum_{\substack{\mf\in \mE\\ \mf\ne \me}}|\mu_{\me\mf}|,\quad \hbox{for some }\me\in \mE.
\]
Also, the absolute value of the off-diagonal $\me-\mf$ entries of the dual Laplacian $\mathcal I^\top\mathcal I$ are
\[
|\mu_{\me\mf}|=\left| \sum_{\mf\in\mV}\iota_{\mv\me}\iota_{\mv\mf}\right|=\left|\#(\metarg\cap \mftarg)+\#(\mesour\cap \mfsour)-\#(\metarg\cap \mfsour)-\#(\mesour\cap \metarg)\right|.
\]
and the second bound follows, too, since $\mathcal I\mathcal I^\top$ and $\mathcal I^\top\mathcal I$ have the same non-zero eigenvalues.

(2) It immediately follows combining Ger\v{s}gorin's Theorem and \autoref{lem:diag-semidom} that each eigenvalue satisfies
\begin{equation*}
\begin{split}
2\deg(\mv) - \sum_{\me\in \mE_\mv}\deg(\me)
&=
\deg(\mv) - \sum_{\me\in \mE_\mv}(\deg(\me) - 1) \\
& \le \lambda \le \deg(\mv) + \sum_{\me\in \mE_\mv}(\deg(\me) - 1)
= \sum_{\me\in \mE_\mv} \deg(\me)\qquad\hbox{for some }\mv\in\mV
\end{split}
\end{equation*}
Taking the maximum over all vertices yields the upper bound in \eqref{eq:firstgershc}.

To prove the second bound, observe that for each $\me\in\mE$,
\begin{align*}
\sum_{\mf\neq \me} |\mu_{\me\mf}|
&\le \sum_{\mv\in \me} (\deg(\mv) - 1)
\end{align*}
therefore applying Ger\v{s}gorin's Theorem we find that each eigenvalue $\lambda$ of the dual Laplacian 
%$\mathcal I^\top \mathcal I$ 
(and hence each non-zero eigenvalue of the Laplacian) satisfies
\begin{equation*}
 \lambda \le \deg(\me) + \sum_{\mv\in \me} (\deg(\mv) - 1)
= \sum_{\mv\in \me} \deg(\mv).
\end{equation*}
Taking the maximum over all edges yields \eqref{eq:secondgershc}.
\end{proof}

As an immediate consequence of \autoref{rem:notation-after-josmul} and \autoref{rem:misc}.(\ref{remitem:josmulrol}) and the co-spectrality of $\mathcal L_{\mH}$ and $\mathcal L_{\mH^*}$ away from 0 we obtain the following version of \cite[Lemma~49]{JosMul19}.

\begin{lemma}
Let $\mH$ be a directed hypergraph.
The non-zero eigenvalues of neither the Laplacian $\mathcal L_{\mH}$ nor the dual Laplacian $\mathcal L_{\mH^*}$ change under swapping of hyperedge orientation or swapping of vertex role.
\end{lemma}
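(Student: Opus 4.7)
The plan is to combine two observations already present in the excerpt, which make the lemma an essentially immediate corollary. First, I would translate the two invariance statements of \autoref{rem:notation-after-josmul} into plain matrix language. Swapping the orientation of a hyperedge $\me$ negates the $\me$-th column of $\mathcal I$, i.e.\ replaces $\mathcal I$ by $\mathcal I D_\mE$ with $D_\mE$ a diagonal $\pm 1$ matrix indexed by $\mE$; then
\[
(\mathcal I D_\mE)(\mathcal I D_\mE)^\top=\mathcal I D_\mE D_\mE^\top \mathcal I^\top=\mathcal I\mathcal I^\top=\mathcal L_\mH.
\]
Dually, ``swapping the role'' of a vertex $\mv$ (simultaneously flipping its $\mesour/\metarg$ membership in every incident hyperedge) negates the $\mv$-th row of $\mathcal I$, i.e.\ replaces $\mathcal I$ by $D_\mV\mathcal I$, and
\[
(D_\mV\mathcal I)^\top(D_\mV\mathcal I)=\mathcal I^\top D_\mV^\top D_\mV \mathcal I=\mathcal I^\top\mathcal I=\mathcal L_{\mH^*}.
\]
Thus each operation preserves its ``own'' Laplacian entirely, and in particular preserves its entire spectrum.

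Second, I would invoke the standard linear-algebraic identity that for any rectangular matrix $A$ the products $AA^\top$ and $A^\top A$ share the same non-zero eigenvalues with identical algebraic multiplicities — this is exactly the cospectrality property mentioned just above the statement. Applied to $A=\mathcal I$ (and to its modification $\mathcal I D_\mE$ or $D_\mV\mathcal I$), this transfers the invariance: any swap that leaves $\mathcal L_\mH$ unchanged must leave the non-zero part of the spectrum of $\mathcal L_{\mH^*}$ unchanged, and symmetrically, any swap that leaves $\mathcal L_{\mH^*}$ unchanged must leave the non-zero part of the spectrum of $\mathcal L_\mH$ unchanged. Iterating over elementary single-hyperedge and single-vertex swaps gives the full claim.

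There is no real obstacle here — the proof is bookkeeping on top of \autoref{rem:notation-after-josmul} and the well-known $AA^\top$/$A^\top A$ identity, and the excerpt itself flags the result as ``an immediate consequence''. The only mildly delicate point is being explicit that ``swapping of vertex role'' for $\mv$ means precisely negating the $\mv$-th row of $\mathcal I$, so that it plays exactly the dual role to a column flip; once that identification is spelled out, the two bullet-point arguments above close the proof symmetrically.
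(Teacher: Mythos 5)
Your proposal is correct and follows essentially the same route as the paper: the paper derives the lemma directly from \autoref{rem:notation-after-josmul} (invariance of $\mathcal L_\mH$ under column sign flips and of $\mathcal L_{\mH^*}$ under row sign flips) combined with the cospectrality of $\mathcal I\mathcal I^\top$ and $\mathcal I^\top\mathcal I$ away from $0$, which is exactly your two-step argument with the diagonal $\pm1$ matrices made explicit.
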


%\begin{exa}\label{exa:prototyp2}
%Consider the hypergraphs $\mH_1,\mH_2$ with incidence matrix
%\[
%\mathcal I_1=\begin{pmatrix}
%-1 & 1 & 1\\
%1 & -1 & 1\\
%1 & 1 & -1
%\end{pmatrix}\quad\hbox{and}\quad
%\mathcal I_2=\begin{pmatrix}
%-1 & 1 & 1\\
%1 & 1 & 1\\
%1 & 1 & -1
%\end{pmatrix}.
%\]
%Each vertex has, in both hypergraphs, degree 3. Also, the incidence matrices reveal that for any fixed $\mv\in \mV$ and any further $\mw\in \mV$, $\mw\ne \mv$, there are two (resp., one) hyperedges in which $\mv,\mw$ are anti-oriented and one (resp., two) hyperedge in which they are co-oriented. Accordingly, we deduce from \autoref{lem:gersh1}.(1) that the eigenvalues of both $\mathcal L_{\mH_1},\mathcal L_{\mH_2}$ are contained in $[1,5]$. 
%
%Indeed, the corresponding Laplacians are given by
%\[
%\mathcal L_1=\begin{pmatrix}
%3 & -1 & -1\\
% -1 & 3 & -1\\
% -1 & -1 & 3
%\end{pmatrix}\quad\hbox{and}\quad 
%\mathcal L_2=\begin{pmatrix}
%3 & 1 & -1\\
% 1 & 3 & -1\\
% -1 & -1 & 3
%\end{pmatrix},
%\]
%respectively, and one can check that the eigenvalues of both matrices agree: they are $1,4,4$.
%
%This exemplifies a major difference between graphs and directed hypergraphs; namely, in the latter case a strictly positive ground state energy may be achieved even without a potential or Dirichlet boundary conditions.
%\end{exa}

% Apart from this, the spectral theory of directed hypergraphs is still rather rudimentary, and in particular 

By the Spectral Theorem, if the eigenvalues of $\mathcal L$ satisfy $\lambda_2>\lambda_1=0$, then $\lambda_2$ gives the rate of convergence to equilibrium of $(\e^{-t\mathcal L})_{t\ge 0}$.
Let us formulate an estimate for $\lambda_2$, where we adopt the notation introduced in \autoref{rem:notation-after-josmul}. In the following we call a directed hypergraph \textit{equipotent} if
\[
\#\metarg =\#\mesour\qquad \hbox{for all }\me\in\mE.
\]

\begin{lemma}\label{lem:a10charac}
Let $\mH$ be a directed hypergraph. Consider the following conditions:
\begin{enumerate}[(a)]
\item\label{item:equipot-equipot} The directed hypergraph $\mH$ is equipotent.
\item\label{item:nullpace-equipot} $\mathcal L\mathbf{1}=0$.
\item\label{item:combinatorial-equipot} $\deg(\mv)+\sum_{\mw\ne \mv}\#\mE^{\backslash\!\backslash}_{\mv\mw}=\sum_{\mw\ne \mv}\#\mE^{\not\backslash\!\backslash}_{\mv\mw}$ for all $\mv\in \mV$.
\item\label{item:fiedler-equipot} $\lambda_2 \le \frac{\#\mV}{\#\mV-1}\deg_{\min}(\mV)$.
\end{enumerate}
Then (\ref{item:equipot-equipot})$\Leftrightarrow$(\ref{item:nullpace-equipot})$\Leftrightarrow$(\ref{item:combinatorial-equipot})$\Rightarrow$(\ref{item:fiedler-equipot}).
\end{lemma}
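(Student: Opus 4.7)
The plan is to prove the chain (\ref{item:equipot-equipot}) $\Leftrightarrow$ (\ref{item:nullpace-equipot}) $\Leftrightarrow$ (\ref{item:combinatorial-equipot}) $\Rightarrow$ (\ref{item:fiedler-equipot}) in three short steps, using what has already been set up in Section~\ref{sec:general} together with the variational (min--max) characterisation of $\lambda_2$.

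For (\ref{item:equipot-equipot}) $\Leftrightarrow$ (\ref{item:nullpace-equipot}), I would note first that since $\mathcal L=\mathcal I\mathcal I^\top$ is positive semidefinite, $\ker\mathcal L=\ker\mathcal I^\top$. Then a direct computation gives $(\mathcal I^\top\mathbf 1_{\#\mV})_\me=\sum_{\mv\in\mV}\iota_{\mv\me}=\#\metarg-\#\mesour$ for every $\me\in\mE$. Hence $\mathcal L\mathbf 1=0$ if and only if $\#\metarg=\#\mesour$ for every hyperedge, which is exactly equipotence. The equivalence (\ref{item:nullpace-equipot}) $\Leftrightarrow$ (\ref{item:combinatorial-equipot}) is obtained by computing the $\mv$-th entry of $\mathcal L\mathbf 1$ row-wise via \eqref{eq:josmul-basic}:
\[
(\mathcal L\mathbf 1)_\mv=\mathcal L_{\mv\mv}+\sum_{\mw\ne\mv}\mathcal L_{\mv\mw}=\deg(\mv)+\sum_{\mw\ne\mv}\#\mE^{\backslash\!\backslash}_{\mv\mw}-\sum_{\mw\ne\mv}\#\mE^{\not\backslash\!\backslash}_{\mv\mw}.
\]
Setting this expression to zero for every $\mv\in\mV$ is precisely condition~(\ref{item:combinatorial-equipot}).

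For the implication (\ref{item:combinatorial-equipot}) $\Rightarrow$ (\ref{item:fiedler-equipot}) I would use that under the already-established equivalent condition (\ref{item:nullpace-equipot}), $\mathbf 1_{\#\mV}$ spans a one-dimensional subspace of $\ker\mathcal L$, so by the Courant--Fischer min--max formula
\[
\lambda_2(\mH)=\min_{\substack{f\in\K^\mV,\,f\ne 0\\ f\perp\mathbf 1_{\#\mV}}}\frac{\langle\mathcal L f,f\rangle}{\|f\|_2^2}.
\]
Any admissible test vector therefore yields an upper bound. Picking a vertex $\mv_0\in\mV$ with $\deg(\mv_0)=\deg_{\min}(\mV)$, I would use
\[
f:=e_{\mv_0}-\tfrac{1}{\#\mV}\mathbf 1_{\#\mV},
\]
which satisfies $f\perp\mathbf 1_{\#\mV}$ by construction. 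A direct calculation gives $\|f\|_2^2=\frac{\#\mV-1}{\#\mV}$, whereas using $\mathcal L\mathbf 1=0$ one finds
\[
\langle\mathcal L f,f\rangle=\langle\mathcal L e_{\mv_0},e_{\mv_0}\rangle=\mathcal L_{\mv_0\mv_0}=\deg_{\min}(\mV).
\]
Dividing yields the desired bound $\lambda_2\le\frac{\#\mV}{\#\mV-1}\deg_{\min}(\mV)$.

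I do not foresee any genuine obstacle: both equivalences are essentially bookkeeping on $\mathcal I^\top\mathbf 1$, and the min--max argument requires only a single judiciously chosen test vector. The only subtlety worth double-checking is that the Rayleigh quotient of the centred indicator collapses cleanly --- which it does precisely because (\ref{item:nullpace-equipot}) kills the cross terms involving $\mathbf 1$.
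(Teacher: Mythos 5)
Your proposal is correct and follows essentially the same route as the paper: the equivalences (a)$\Leftrightarrow$(b)$\Leftrightarrow$(c) are exactly the computation of $\mathcal I^\top\mathbf 1$ and of the row sums via \eqref{eq:josmul-basic}, and your Courant--Fischer argument with the centred indicator $e_{\mv_0}-\tfrac1{\#\mV}\mathbf 1$ is precisely the Fiedler-type proof the paper invokes by citation for (b)$\Rightarrow$(d). The only difference is that you spell out that last step explicitly, and your computation (norm $\tfrac{\#\mV-1}{\#\mV}$, Rayleigh numerator $\deg_{\min}(\mV)$ since $\mathcal L\mathbf 1=0$ kills the cross terms) is correct.
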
 
 
Clearly, oriented graphs are equipotent, as $\#\metarg =\#\mesour\equiv 1$, and indeed (\ref{item:combinatorial-equipot}) is satisfied with $\mE^{\backslash\!\backslash}_{\mv\mw}\equiv \emptyset$, $\deg(\mv)\equiv \sum_{\mw\ne \mv}\#\mE^{\not\backslash\!\backslash}_{\mv\mw}$.
 Also, the directed hypergraphs in \autoref{exa:prototyp}.(\ref{item:hyperrotat}) show that the implication (\ref{item:fiedler-equipot})$\Rightarrow$(\ref{item:nullpace-equipot}) 
 %in \autoref{lem:a10charac} 
 does \emph{not} hold.

\begin{proof}
It was already observed in \cite[Lemma~20]{JosMul19} that (\ref{item:equipot-equipot})$\Rightarrow$(\ref{item:nullpace-equipot}), whereas the converse implication holds because $\Ker \mathcal L=\Ker \mathcal I^\top$ and $\mathcal I^\top\mathbf{1}=\#\metarg -\#\mesour$.
The equivalence (\ref{item:nullpace-equipot})$\Leftrightarrow$(\ref{item:combinatorial-equipot}) follows from \eqref{eq:josmul-basic}.
The implication (\ref{item:nullpace-equipot})$\Rightarrow$(\ref{item:fiedler-equipot}) can be proved exactly like \cite[3.5]{Fie73}.
\end{proof}

\begin{exa} 
Finer estimates can be achieved by applying the theory developed in~\cite{DalMugSch15}. 
Consider two directed hypergraphs from the class discussed in~\autoref{exa:prototyp}.(\ref{item:onehyperedge}), for simplicity in the case $\#\mV=3$: we let
\[
\mathcal I_{1}:=\begin{pmatrix}
-1 \\ 1 \\ 1
\end{pmatrix}\qquad\hbox{and}\qquad 
\mathcal I_{2}:=\begin{pmatrix}
1 \\ 1 \\ 1
\end{pmatrix}:
\]
we already know from \autoref{exa:prototyp}.(\ref{item:onehyperedge}) that the spectrum of both $\mathcal L_1,\mathcal{L}_2$ is $0^{\{2\}},3$. 

We want to compute 
\[
R_{31}(\lambda)=\frac{a_{13}^2 + |a_{13}a_{23}+a_{12}(\lambda-a_{33})|}{(\lambda-a_{11})(\lambda-a_{33})}\qquad\hbox{and}\qquad
R_{32}(\lambda):=\frac{a_{23}^2 + |a_{13}a_{23}+a_{12}(\lambda-a_{33})|}{(\lambda-a_{22})(\lambda-a_{33})}
\]
since, by \cite[Theorem 4.1]{DalMugSch15}, and using the notation therein,
\[
\sigma(A)\subset S_3(A):=\{a_{11},a_{22},a_{33}\}\cup \{\lambda\ne \{a_{11},a_{33}\}:R_{31}(\lambda)\ge 1\}\cup 
\{\lambda\ne \{a_{22},a_{33}\}:R_{32}(\lambda)\ge 1\}.
\]
Now, for $\mathcal L_2$ we find
\[
R_{31}(\lambda)=R_{32}(\lambda)=\frac{1+|\lambda|}{(1-\lambda)^2}
\]
whence
\[
\sigma(\mathcal L_2)\subset S_3(\mathcal L_2)=[0,3].
\]
Concerning $\mathcal L_1$, 
\[
R_{21}(\lambda)=
%\frac{1+|\lambda|}{(1-\lambda)^2}
R_{31}(\lambda)=\frac{1+|\lambda|}{(1-\lambda)^2},\qquad 
R_{32}(\lambda)=\frac{1+|\lambda-2|}{(1-\lambda)^2}
\]
so by \cite[Corollary~4.5]{DalMugSch15},
\[
\sigma(\mathcal L_1)\subset S_1(\mathcal L_1)\subset [0,3].
\]
This methods refines the bounds based on Ger\v{s}gorin's Theorem, which could only deliver the inclusions
$\sigma(\mathcal L_1),\sigma(\mathcal L_2)\subset[-1,3]$.
\end{exa}

\subsection{Hypergraph perturbation of graphs}

Because the eigenvalues are given by the Rayleigh quotient 
\[
f\mapsto \frac{\|\mathcal I^\top f\|_{\ell^2(\mE)}^2}{\|f\|_{\ell^2(\mV)}^2},\qquad f:\mV\to \R,
\]
we immediately obtain the following, as edge deletion leads to a smaller numerator in the numerator of the Rayleigh quotient.

\begin{lemma}\label{lem:hyperfiedler}
Let $\mH$ be a directed hypergraph. Then the lowest eigenvalue $\lambda_1$ of $\mathcal L$ does not increase upon deleting hyperedges of $\mH$.
\end{lemma}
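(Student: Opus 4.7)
The plan is to make the hint in the sentence preceding the statement rigorous via the variational (Rayleigh) characterisation of the smallest eigenvalue. Concretely, I would start from the standard identity
\[
\lambda_1(\mH)=\min_{0\ne f\in\K^\mV}\frac{\|\mathcal I^\top f\|^2_{\ell^2(\mE)}}{\|f\|^2_{\ell^2(\mV)}},
\]
which is immediate from the fact that $\mathcal L=\mathcal I\mathcal I^\top$ is symmetric positive semidefinite, so that $\langle \mathcal L f, f\rangle=\|\mathcal I^\top f\|^2_{\ell^2(\mE)}$.

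Next, let $\mH'=(\mV,\mE')$ with $\mE'\subseteq\mE$ be obtained from $\mH$ by deleting some hyperedges, and denote by $\mathcal I'$ its incidence matrix. Because $\mathcal I'$ is precisely the submatrix of $\mathcal I$ obtained by keeping only the columns indexed by $\mE'$, one has, for every $f\in\K^\mV$,
\[
\|\mathcal I'^\top f\|^2_{\ell^2(\mE')}=\sum_{\me\in\mE'}\Bigl|\sum_{\mv\in\mV}\iota_{\mv\me}f(\mv)\Bigr|^2\le\sum_{\me\in\mE}\Bigl|\sum_{\mv\in\mV}\iota_{\mv\me}f(\mv)\Bigr|^2=\|\mathcal I^\top f\|^2_{\ell^2(\mE)}.
\]
Dividing by $\|f\|^2_{\ell^2(\mV)}$ and taking the minimum over $f\ne 0$ yields $\lambda_1(\mH')\le\lambda_1(\mH)$, which is the claim.

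There is essentially no obstacle here: the statement is a one-line corollary of the variational formula together with the monotonicity of the sum over $\mE$ in the number of hyperedges. The only minor care needed is to emphasise that hyperedge deletion corresponds to column deletion in $\mathcal I$ (equivalently, row deletion in $\mathcal I^\top$), so that the $\ell^2(\mE)$-norm of $\mathcal I^\top f$ genuinely dominates the $\ell^2(\mE')$-norm of $\mathcal I'^\top f$ term by term, and that the same vector $f$ can be used as a test function for both quotients since the vertex set is unchanged.
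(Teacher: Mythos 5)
Your argument is correct and is exactly the paper's reasoning: the lowest eigenvalue is the minimum of the Rayleigh quotient $f\mapsto \|\mathcal I^\top f\|_{\ell^2(\mE)}^2/\|f\|_{\ell^2(\mV)}^2$, and deleting hyperedges removes nonnegative terms from the numerator, so the quotient, and hence its minimum, cannot increase. Your added remark that hyperedge deletion is column deletion in $\mathcal I$ (with the same test vector $f$ usable for both quotients) just makes explicit what the paper leaves implicit.
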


In particular, given two hypergraphs $\mH_1,\mH_2$, the $k$-th eigenvalue of the Laplacian of their union $\mH_1\cup \mH_2$ is not smaller than the $k$-the eigenvalue of the either $\mathcal L_{\mH_1}$ or $\mathcal L_{\mH_2}$.

\begin{lemma}\label{lem:hyperfiedler-hoeld}
Let $\mH=(\mV,\mE_\mH)$ be an equipotent directed hypergraph such that $\deg(\me)=2m$ for all $\me\in\mE$ and some $m\in\N$. Then
\[
\lambda_1(\mH)\le m\lambda_1(\mG),
\]
where $\mG=(\mV,\mE_\mG)$ is any graph obtained replacing each hyperedge $\me\in \mE_\mH$ by $m$ edges in such a way that each element of $\mesour$ is connected with precisely one element of $\metarg$.
\end{lemma}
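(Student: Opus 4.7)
\medskip
\noindent\textbf{Proof plan.} The natural strategy is to compare the two quadratic forms $f\mapsto\|\mathcal I_\mH^\top f\|^2$ and $f\mapsto\|\mathcal I_\mG^\top f\|^2$ pointwise in $f$, and then invoke the Rayleigh--Ritz principle. For any $f\in\R^\mV$,
\[
\|\mathcal I_\mH^\top f\|^2 \;=\; \sum_{\me\in\mE_\mH}\Big(\sum_{\mv\in\metarg}f(\mv)-\sum_{\mv\in\mesour}f(\mv)\Big)^2.
\]
Equipotence together with the assumption $\deg(\me)=2m$ forces $\#\mesour=\#\metarg=m$ for each hyperedge $\me$, and the construction of $\mG$ specifies, for each $\me\in\mE_\mH$, a bijection $\pi_\me:\mesour\to\metarg$ whose graph provides the $m$ replacement edges contributed by $\me$ to $\mE_\mG$. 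I will use this matching to pass from a single squared endset difference to a sum of squared edge differences.

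Rewriting the difference of endset sums via $\pi_\me$ one finds
\[
\sum_{\mv\in\metarg}f(\mv)-\sum_{\mv\in\mesour}f(\mv) \;=\; \sum_{\mv\in\mesour}\bigl(f(\pi_\me(\mv))-f(\mv)\bigr),
\]
and the Cauchy--Schwarz inequality $(a_1+\cdots+a_m)^2\le m(a_1^2+\cdots+a_m^2)$ yields
\[
\Big(\sum_{\mv\in\metarg}f(\mv)-\sum_{\mv\in\mesour}f(\mv)\Big)^2 \;\le\; m\sum_{\mv\in\mesour}\bigl(f(\pi_\me(\mv))-f(\mv)\bigr)^2.
\]
Summing over $\me\in\mE_\mH$ and recognising the right-hand side as $m\|\mathcal I_\mG^\top f\|^2$ gives the desired pointwise form comparison
\[
\|\mathcal I_\mH^\top f\|^2 \;\le\; m\,\|\mathcal I_\mG^\top f\|^2\qquad\text{for every }f\in\R^\mV,
\]
i.e.\ $\mathcal L_\mH\le m\,\mathcal L_\mG$ in the positive semidefinite ordering.

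Dividing by $\|f\|_{\ell^2(\mV)}^2$ and minimising over $f\ne 0$, the Rayleigh--Ritz formula delivers $\lambda_1(\mH)\le m\,\lambda_1(\mG)$ at once; more generally, the Courant--Fischer min--max principle applied to the same form inequality upgrades the conclusion to $\lambda_k(\mH)\le m\,\lambda_k(\mG)$ for all $k=1,\dots,\#\mV$. There is no serious obstacle here: the only point requiring attention is the existence of the matching $\pi_\me$, which is guaranteed precisely by equipotence together with $\deg(\me)=2m$; note also that equality in the Cauchy--Schwarz step occurs exactly when $\mv\mapsto f(\pi_\me(\mv))-f(\mv)$ is constant on $\mesour$, so the factor $m$ cannot be improved in general.
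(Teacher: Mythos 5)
Your proof is correct and follows essentially the same route as the paper: the pairing $\pi_\me$ is exactly the paper's arbitrary enumeration of $\mesour$ and $\metarg$, the Cauchy--Schwarz step $\bigl(\sum_i a_i\bigr)^2\le m\sum_i a_i^2$ is identical, and the conclusion is drawn from the Rayleigh quotient just as in the text. Your added remarks (the min--max upgrade to all $\lambda_k$ and the equality discussion) are fine but go beyond what the paper states.
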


\begin{proof}
Take $\me\in \mE_\mH$ and consider an arbitrary numeration $\metarg=\{\mv_1,\ldots,\mv_m\}$ and $\mesour=\{\mw_1,\ldots,\mw_m\}$. Then
\[
|(\mathcal I^\top_\mH f)(\me)|^2=\left| \sum_{\mv\in\metarg}f(\mv)-
 \sum_{\mw\in\mesour}f(\mw)\right|^2=\left|\sum_{i=1}^m(f(\mv_i)-f(\mw_i)) \right|^2\le m\sum_{i=1}^m|f(\mv_i)-f(\mw_i)|^2,
\]
whence
\[
\|\mathcal I^\top_\mH f\|_{\ell^2(\mE_\mH)}^2\le m\|\mathcal I^\top_\mG f\|_{\ell^2(\mE_\mG)}^2:
\]
this yields the claim.
\end{proof}

Let us now focus on a special class of surgical results that consider a hypergraph Laplacian that arises as perturbation of the Laplacian or signless Laplacian of a graph.

\begin{lemma}\label{lem:spectralgeomhyper}
Let the directed hypergraphs $\mH$ be the union of a graph $\mG$ and a hypergraph consisting of a unique hyperedge $\me$ that contains all the vertices of $\mG$ and such that any two vertices are co-oriented in $\me$. 

Then (counting multiplicity) the spectrum of $\mathcal L_\mH$ consists of 0 (with multiplicity $c-1$), of $\#\mV$, and of all non-zero eigenvalues of $\mathcal L_\mG$; here $c$ denotes the number of connected components of $\mG$. The lowest eigenvalue $\lambda_{\min}$ is $\#\mV$ if and only if $\mG$ is the complete graph, and in this case the orthogonal projector onto the corresponding eigenspace is the identity. Otherwise, $\lambda_{\min}<\#\mV$ and the orthogonal projector is not a positive matrix.
\end{lemma}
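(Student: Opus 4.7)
The plan is to write $\mathcal L_\mH$ as a rank-one additive perturbation of $\mathcal L_\mG$ and then perform the spectral analysis in the orthogonal decomposition $\K^\mV = \mathrm{span}(\mathbf{1}_{\#\mV}) \oplus \mathbf{1}_{\#\mV}^\perp$.

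First I would note that, since any two vertices are co-oriented in $\me$, either $\metarg=\mV$, $\mesour=\emptyset$ or vice versa; both cases give the same Laplacian. The column of the incidence matrix associated with $\me$ is $\pm\mathbf{1}_{\#\mV}$, so its contribution to the Laplacian is $\mathbf{1}_{\#\mV}\mathbf{1}_{\#\mV}^\top=J_{\#\mV,\#\mV}$. Since $\mathcal I_\mH=(\mathcal I_\mG\ |\ \pm\mathbf{1}_{\#\mV})$, the block form yields
\[
\mathcal L_\mH=\mathcal L_\mG+J_{\#\mV,\#\mV}.
\]

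Next I would compute the spectrum exploiting the fact that both summands leave $\mathrm{span}(\mathbf{1}_{\#\mV})$ and $\mathbf{1}_{\#\mV}^\perp$ invariant. Since $\mathbf{1}_{\#\mV}\in\ker\mathcal L_\mG$ and $J_{\#\mV,\#\mV}\mathbf{1}_{\#\mV}=\#\mV\,\mathbf{1}_{\#\mV}$, the vector $\mathbf{1}_{\#\mV}$ is an eigenvector of $\mathcal L_\mH$ with eigenvalue $\#\mV$. On $\mathbf{1}_{\#\mV}^\perp$ the matrix $J_{\#\mV,\#\mV}$ vanishes, so $\mathcal L_\mH$ coincides with $\mathcal L_\mG$ there. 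The kernel of $\mathcal L_\mG$ is $c$-dimensional and spanned by the indicator functions of the connected components; intersecting it with $\mathbf{1}_{\#\mV}^\perp$ produces a $(c-1)$-dimensional subspace of zero-mean component-constant functions, contributing eigenvalue $0$ with multiplicity $c-1$. The $\#\mV-c$ non-zero eigenvalues of $\mathcal L_\mG$ are realised on eigenvectors orthogonal to $\ker\mathcal L_\mG$, in particular to $\mathbf{1}_{\#\mV}$, and are therefore inherited by $\mathcal L_\mH$. The multiplicities sum to $1+(c-1)+(\#\mV-c)=\#\mV$, as required.

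For the characterisation of $\lambda_{\min}$, I would argue that $\lambda_{\min}(\mathcal L_\mH)=\#\mV$ forces the $0$-eigenspace to be trivial (so $c=1$) and every non-zero eigenvalue of $\mathcal L_\mG$ to be $\geq \#\mV$; combined with the standard bound $\lambda_{\max}(\mathcal L_\mG)\le \#\mV$, this means $\mathcal L_\mG$ acts as $\#\mV$ on $\mathbf{1}_{\#\mV}^\perp$, i.e.\ $\mathcal L_\mG=\#\mV\,\Id-J_{\#\mV,\#\mV}=\mathcal L_{K_{\#\mV}}$, so $\mG=K_{\#\mV}$. Conversely, in that case $\mathcal L_\mH=\mathcal L_{K_{\#\mV}}+J_{\#\mV,\#\mV}=\#\mV\,\Id_{\#\mV}$, so $\#\mV$ is the sole eigenvalue and the projector onto its eigenspace is $\Id_{\#\mV}$.

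Finally, in the remaining case $\mG\neq K_{\#\mV}$ (either $c\geq 2$, or $c=1$ and the algebraic connectivity is strictly less than $\#\mV$), we have $\lambda_{\min}(\mathcal L_\mH)<\#\mV$, and I would observe that its eigenspace $V$ is a non-zero subspace contained in $\mathbf{1}_{\#\mV}^\perp$, since the $\#\mV$-eigenspace already accounts for $\mathrm{span}(\mathbf{1}_{\#\mV})$. Hence the orthogonal projector $P$ onto $V$ satisfies $P\mathbf{1}_{\#\mV}=0$, i.e.\ every row of $P$ has zero sum; were all entries of $P$ nonnegative, this would force $P=0$, contradicting $V\neq\{0\}$. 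Thus $P$ must possess at least one strictly negative entry, concluding the proof. The main obstacle is a clean treatment of the "if and only if" clause for the complete graph, which rests on the bound $\lambda_{\max}(\mathcal L_\mG)\le \#\mV$; once this is invoked, the non-positivity of the projector follows uniformly in both subcases from the single observation $P\mathbf{1}_{\#\mV}=0$.
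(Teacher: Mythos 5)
Your proposal is correct, and its core is the same as the paper's: the identity $\mathcal L_\mH=\mathcal L_\mG+J_{\#\mV,\#\mV}$, the eigenvalue $\#\mV$ on $\mathrm{span}(\mathbf{1}_{\#\mV})$, and the recovery of $0$ (multiplicity $c-1$) and the non-zero spectrum of $\mathcal L_\mG$ on $\mathbf{1}_{\#\mV}^\perp$; your invariant-subspace phrasing is just a compact repackaging of the paper's explicit eigenvector bookkeeping.

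Where you genuinely diverge is in the two closing steps. For the characterisation of the case $\lambda_{\min}=\#\mV$, the paper quotes Fiedler's result that the algebraic connectivity of a connected graph equals $\#\mV$ if and only if the graph is complete (and is otherwise at most $\#\mV-2$), whereas you squeeze the non-zero spectrum of $\mathcal L_\mG$ between the hypothesis $\lambda\ge\#\mV$ and the classical upper bound $\lambda_{\max}(\mathcal L_\mG)\le\#\mV$, forcing $\mathcal L_\mG=\#\mV\,\Id-J_{\#\mV,\#\mV}$; both rest on a standard external fact, but yours reconstructs the complete graph directly from the matrix and so also delivers the reverse implication without a separate case analysis. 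For the non-positivity of the projector $P$ in the non-complete case, the paper invokes Perron--Frobenius (a nonnegative $P$ would admit a nonnegative Perron eigenvector, contradicting orthogonality to $\mathbf{1}_{\#\mV}$), while you observe that $E_{\lambda_{\min}}\subseteq\mathbf{1}_{\#\mV}^\perp$ gives $P\mathbf{1}_{\#\mV}=0$, and a nonnegative matrix with zero row sums is the zero matrix, contradicting $E_{\lambda_{\min}}\neq\{0\}$. Your argument is more elementary and self-contained; the paper's Perron--Frobenius route is the one it reuses elsewhere (e.g.\ in the asymptotic-positivity discussion), which is what that slightly heavier tool buys there. No gaps to report.
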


The setting of \autoref{lem:spectralgeomhyper-signless}: the fact that $0$ is not an eigenvalue of $\mathcal L_\mH$ whenever $\mG$ is connected was shown to hold under more general assumptions on the hyperedge $\me$ in \cite[Lemma~23]{JosMul19}.

\begin{proof}
The incidence matrix of $\mH$ is 
\[
\mathcal I_{\mH}=
\begin{pmatrix}
\begin{tabular}{c|c}
$\mathbf{1}_{\#\mV}$ & $\mathcal I_\mG$ 
\end{tabular} 
\end{pmatrix}
\qquad\hbox{whence}\qquad
\mathcal I^\top_\mH =\begin{pmatrix}
\begin{tabular}{c}
$\mathbf{1}^\top_{\#\mV}$ \\[3pt]
\hline  \\[-10pt]
 $\mathcal{I}^\top_{\mG}$ 
\end{tabular} 
\end{pmatrix}
\]
and therefore the $\#\mV\times \#\mV$ matrix $\mathcal L_\mH$ is given by
\begin{equation}\label{eq:thisisprec}
\mathcal L_{\mH}=\mathcal I_\mH \mathcal I_\mH^\top
=\mathbf{1}_{\#\mV}\mathbf{1}_{\#\mV}^\top +
\mathcal I_\mG \mathcal I_\mG^\top
=
J_{\#\mV,\#\mV}+\mathcal L_\mG.
\end{equation}
%and it is hence a rank-1 perturbation of $\mathcal L_\mG$.

Now, observe that $J_{\#\mV,\#\mV},\mathcal L_\mG$ share the eigenvector $\mathbf{1}_{\#\mV}$, with $J_{\#\mV,\#\mV}\mathbf{1}_{\#\mV}=\#\mV\cdot\mathbf{1}_{\#\mV}$ and $\mathcal L_\mG\mathbf{1}_{\#\mV}=0$: so $\#\mV $ is an eigenvalue of $\mathcal L_\mH$. If, furthermore, $\mV_1,\ldots,\mV_c$ denote the vertex sets of the $c$ connected components of $\mG$, then whenever $c>1$
\[
\#\mV_2 \mathbb{1}_{\mV_1}-\#\mV_1 \mathbb{1}_{\mV_2},\ldots,\#\mV_c \mathbb{1}_{\mV_1}-\#\mV_1 \mathbb{1}_{\mV_c}
\]
yield $c-1$ further eigenvectors of $\mathcal L_\mH$, since all of them lie in the null space of both $\mathcal L_\mG$ (because the characteristic functions $\mathbb{1}_{\mV_i}$ of the of the connected components span its null space) and $J_{\#\mV,\#\mV}$, and hence of $\mathcal L_\mH$.

Finally,  $\mathcal L_\mG$ has eigenvectors that are not connected component-wise constant: each of them is orthogonal to $\mathbf{1}_{\#\mV}$ and, hence, lies in the null space of $J_{\#\mV,\#\mV}$, leading to $\#\mV-c$ further eigenvalues of $\mathcal L_\mH$, all of them non-zero. Summing up, the spectrum of $\mathcal L_\mH$ contains 
\begin{itemize}
\item $\#\mV$, 
\item 0 with multiplicity $c-1$ (if $c>1$), and 
\item all $\#\mV-c$ nonzero eigenvalues of $\mathcal L_\mG$.
\end{itemize}
Counting multiplicity, these are $\#\mV$ eigenvalues and, hence, exhaust the spectrum of $\mathcal L_\mH$.

Now, it is known (\cite[3.6 and 3.10]{Fie73}) that the lowest nonzero eigenvalue of the connected graph $\mG$ is $\#\mV$ if and only if $\mG$ is complete, and $\le \#\mV-2$ otherwise.

 In the former case ($\mG$ complete), the spectrum of $\mathcal L_\mG$ is $0,\#\mV^{\{\#\mV-1\}}$ and we conclude that the spectrum of $\mathcal L_\mH$ is $\#\mV^{\{\#\mV\}}$, i.e., the corresponding eigenspace is $\K^\mV$ and accordingly the orthogonal projector onto it is $\Id_{\#\mV}$.
 
In the latter case ($\mG$ not complete) 
%we have to consider two cases: if $c>1$, then we know that $0$ is an eigenvalue and that the null space is spanned by sign-changing vectors, {\color{red}complete!}
%
%If $c=1$, 
the orthogonal projector $P$ onto the eigenspace corresponding to the lowest eigenvalue $\lambda_{\min}$ is not a positive matrix: if it were, by Perron--Frobenius theory \cite[Theorem~8.3.1]{HorJoh91} $P$ would have a positive eigenvector $\xi$ for the dominant eigenvalue (which is necessarily 1, since $P$ is a non-trivial projector), i.e., at least one eigenvector $\xi$ of $\mathcal L_\mH$ corresponding to $\lambda^*$ could be chosen positive, contradicting its orthogonality to the eigenvector $\mathbf{1}_{\#\mV}$. 
%On the other hand, $P$ projects onto an eigenspace for the eigenvalue $\lambda^*\ne 0$, hence onto a space of vectors that are orthogonal to the null space of $ \mathcal L_\mG$, and in particular to $\mathbf{1}_{\#\mV}$, hence $0<\xi =P\xi \perp  \mathbf{1}_{\#\mV}$, a contradiction.
%
%On the other hand, this matrix has same non-zero eigenvalues as the
%$(\#\mE_{\mG}+1)\times (\#\mE_{\mG}+1)$ matrix
%\begin{equation}\label{eq:thisisprec-cosp}
%\mathcal I^\top_\mH \mathcal I_\mH
%=\mathbf{1}_{\#\mV}\mathbf{1}_{\#\mV}^\top +
%\mathcal I_\mG \mathcal I_\mG^\top
%=\begin{pmatrix}
%\begin{tabular}{c|c}
%$\mathbf{1}_{\#\mV}^\top \mathbf{1}_{\#\mV} $ & $(\mathcal I^\top_\mG \mathbf{1}_{\#\mV} )^\top$ \\[3pt]
%\hline 
%\\[-10pt]
%$\mathcal I^\top_\mG \mathbf{1}_{\#\mV}$ & $\mathcal I^\top_\mG \mathcal I_\mG$ \\ 
%\end{tabular} 
%\end{pmatrix}
%=\begin{pmatrix}
%\begin{tabular}{c|c}
%$\#\mV $ & $0$ \\[3pt]
%\hline 
%\\[-10pt]
%$0$ & $\mathcal I^\top_\mG \mathcal I_\mG$ \\ 
%\end{tabular} 
%\end{pmatrix};
%\end{equation}
%in turn, the eigenvalues of the latter are $\#\mV$ and 
%{\color{red}complete}
\end{proof}

Let us now turn to perturbations of signless Laplacians. As already mentioned, we regard a signless Laplacian as the Laplacian of a graph with no anti-oriented vertices: using its known spectral properties \cite[Section~7.8]{CveRowSim10} we deduce the following.

\begin{lemma}\label{lem:spectralgeomhyper-signless}
Let the directed hypergraphs $\mH$ be the union of a graph $\mG$ that is bipartite with respect to $\widetilde{\mV}_-\cup \widetilde{\mV}_+$ and such that all of its vertices are co-oriented in all of its edges;
and a hypergraph consisting of a unique hyperedge $\me$ that $\mesour=\widetilde{\mV}_-$ and $\metarg=\widetilde{\mV}_+$. 

Then (counting multiplicity) the spectrum of $\mathcal L_\mH$ consists of 0 (with multiplicity $c-1$), of $\#\mV$, and of all non-zero eigenvalues of $\mathcal L_\mG$; here $c$ denotes the number of connected components of $\mG$. 
%The lowest eigenvalue $\lambda_{\min}$ is $\#\mV$ if and only if $\mG$ is the complete graph, and in this case the orthogonal projector onto the corresponding eigenspace is the identity. Otherwise, $\lambda_{\min}<\#\mV$\widetilde{J}$ and the orthogonal projector is not a positive matrix.
\end{lemma}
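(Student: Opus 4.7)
The plan is to proceed in exact parallel to the proof of \autoref{lem:spectralgeomhyper}, replacing the all-ones vector $\mathbf{1}_{\#\mV}$ (which spans the null space of a \emph{connected} Laplacian) by the signed indicator of the bipartition (which spans the null space of a \emph{bipartite} signless Laplacian on each component).

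First, I identify the column of $\mathcal I_\mH$ associated with the hyperedge $\me$: since $\mesour=\widetilde{\mV}_-$ and $\metarg=\widetilde{\mV}_+$, this column is
\[
\psi := \mathbb{1}_{\widetilde{\mV}_+} - \mathbb{1}_{\widetilde{\mV}_-} \in \K^\mV,
\]
and the block form of $\mathcal I_\mH$ gives, in exact analogy with \eqref{eq:thisisprec},
\[
\mathcal L_\mH = \mathcal L_\mG + \psi\psi^\top.
\]
Next, I invoke the well-known description of the null space of the signless Laplacian: since the quadratic form of $\mathcal L_\mG$ equals $f\mapsto \sum_{\{u,v\}\in\mE_\mG}(f(u)+f(v))^2$ and $\mG$ is bipartite with parts $\widetilde{\mV}_-,\widetilde{\mV}_+$, the null space $\Ker \mathcal L_\mG$ is $c$-dimensional and spanned by the $c$ signed indicators
\[
\psi_i := \mathbb{1}_{\widetilde{\mV}_+\cap \mV_i} - \mathbb{1}_{\widetilde{\mV}_-\cap \mV_i},\qquad i=1,\ldots,c,
\]
where $\mV_1,\ldots,\mV_c$ are the vertex sets of the connected components of $\mG$. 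Crucially, $\psi = \sum_{i=1}^c \psi_i$ itself lies in $\Ker \mathcal L_\mG$.

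With this in hand, I decompose $\K^\mV = \Ker \mathcal L_\mG \oplus (\Ker \mathcal L_\mG)^\perp$ and observe that both summands are invariant under $\mathcal L_\mH$: the range of $\psi\psi^\top$ is contained in $\mathrm{span}(\psi)\subset \Ker \mathcal L_\mG$, while for $f\in(\Ker\mathcal L_\mG)^\perp$ one has $\psi^\top f = 0$ and hence $\mathcal L_\mH f = \mathcal L_\mG f$. On the orthogonal complement, $\mathcal L_\mH$ therefore contributes precisely the $\#\mV - c$ nonzero eigenvalues of $\mathcal L_\mG$. On $\Ker \mathcal L_\mG$, $\mathcal L_\mH$ reduces to the restriction of the rank-one operator $\psi\psi^\top$; since $\psi^\top\psi = \#\mV$ (as $\psi$ has $\pm 1$ entries at every vertex), this restriction has eigenvalue $\#\mV$ (simple, with eigenvector $\psi$) and eigenvalue $0$ with multiplicity $c-1$. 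Adding up gives exactly $\#\mV$ eigenvalues, matching the claim.

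The one point I expect to require care is verifying that $\psi\in \Ker \mathcal L_\mG$: this is where the two structural hypotheses on $\mG$ — being signless (all vertices co-oriented in every edge) and being bipartite with respect to exactly the same partition $\widetilde{\mV}_-\cup\widetilde{\mV}_+$ that defines the endsets of $\me$ — must conspire to make every edge-differential $(f(u)+f(v))$ vanish on $\psi$. Once this compatibility is recorded, the remainder is a standard rank-one perturbation argument and the conclusion follows by merely counting dimensions.
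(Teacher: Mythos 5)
Your proposal is correct and follows essentially the same route as the paper's proof: both rest on the identity $\mathcal L_\mH=\mathcal L_\mG+\psi\psi^\top$ with $\psi=\mathbb{1}_{\widetilde{\mV}_+}-\mathbb{1}_{\widetilde{\mV}_-}$, on the fact that the kernel of the (signless) Laplacian $\mathcal L_\mG$ of the bipartite graph is spanned by the component-wise signed indicators (so that $\psi\in\Ker\mathcal L_\mG$ and $\psi^\top\psi=\#\mV$), and on counting. The only difference is organizational: where the paper exhibits explicit eigenvectors (the vector $\psi$ for the eigenvalue $\#\mV$, $c-1$ explicit null combinations, and the eigenvectors of $\mathcal L_\mG$ orthogonal to its kernel), you obtain the same spectral data by splitting $\K^\mV=\Ker\mathcal L_\mG\oplus(\Ker\mathcal L_\mG)^\perp$ into invariant subspaces and reading off the spectrum of the rank-one restriction, which is a clean but equivalent bookkeeping of the identical argument.
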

\begin{proof}
Let us denote by $\mathbb{1}_{\widetilde{\mV}_\pm}$ the characteristic function of the vertex set $\widetilde{\mV}_\pm$, and let $\widetilde{\mathbf{1}}_{\#\mV}:=\mathbb{1}_{\widetilde{\mV}_+}-\mathbb{1}_{\widetilde{\mV}_-}$. Then, the incidence matrix of $\mH$ is 
\[
\mathcal I_{\mH}=
\begin{pmatrix}
\begin{tabular}{c|c}
$\widetilde{\mathbf{1}}_{\#\mV}$ & $\mathcal I_\mG$ 
\end{tabular} 
\end{pmatrix}
\qquad\hbox{whence}\qquad
\mathcal I^\top_\mH =\begin{pmatrix}
\begin{tabular}{c}
$\widetilde{\mathbf{1}}_{\#\mV}^\top$ \\[3pt]
\hline  \\[-10pt]
 $\mathcal{I}^\top_{\mG}$ 
\end{tabular} 
\end{pmatrix}
\]
 and therefore the $\#\mV\times \#\mV$ matrix $\mathcal L_\mH$ is given by
\begin{equation}\label{eq:thisisprec-2}
\mathcal L_{\mH}=\mathcal I_\mH \mathcal I_\mH^\top
=(\mathbb{1}_{\widetilde{\mV}_+}-\mathbb{1}_{\widetilde{\mV}_-})\cdot(\mathbb{1}_{\widetilde{\mV}_+}-\mathbb{1}_{\widetilde{\mV}_-})^\top+
\mathcal I_\mG \mathcal I_\mG^\top
=
\widetilde{J}_{+,-}+\mathcal L_\mG,
\end{equation}
where
\[
\widetilde{J}_{+,-}:=\begin{pmatrix}
\begin{tabular}{c|c}
$J_{\#\widetilde{\mV}_+,\#\widetilde{\mV}_+}$ & $-J_{\#\widetilde{\mV}_+,\#\widetilde{\mV}_-}$ \\ 
\hline 
$-J_{\#\widetilde{\mV}_-,\#\widetilde{\mV}_+}$ & $J_{\#\widetilde{\mV}_-,\#\widetilde{\mV}_-}$ \\ 
\end{tabular} 
\end{pmatrix}:
\]
observe that whenever $\#\widetilde{\mV}_+\ge 1$ and $\#\widetilde{\mV}_-\ge 1$, the eigenvalues of $\widetilde{J}_{+,-}$ are $0^{\{\#\mV-1\}},\#\mV$.
Also, we stress that the assumption that all vertices are co-oriented in each edge of $\mG$ amounts to saying that $\mathcal L_\mG$ is the signless Laplacian of $\mG$.
%and it is hence a rank-1 perturbation of $\mathcal L_\mG$.

Now, observe that $\widetilde{J}_{+,-},\mathcal L_\mG$ share the eigenvector $\widetilde{\mathbf{1}}_{\#\mV}$, with $\widetilde{J}_{+,-}\widetilde{\mathbf{1}}_{\#\mV}=\#\mV\cdot\widetilde{\mathbf{1}}_{\#\mV}$ and $\mathcal L_\mG\widetilde{\mathbf{1}}_{\#\mV}=0$: so $\#\mV $ is an eigenvalue of $\mathcal L_\mH$. 

If, furthermore, $c>1$ and $\mV_1,\ldots,\mV_c$ denote the vertex sets of the $c$ connected components of $\mG$, then 
\[
\widetilde{\mathbf{1}}_{\#\mV}\left(\#\mV_2 \mathbb{1}_{\mV_1}-\#\mV_1 \mathbb{1}_{\mV_2}\right),\ldots,
\widetilde{\mathbf{1}}_{\#\mV}\left(\#\mV_c \mathbb{1}_{\mV_1}-\#\mV_1 \mathbb{1}_{\mV_c}\right)
\]
(that is, appropriate linear combinations of the restrictions to the connected components of the characteristic functions of the sets $\widetilde{\mV}_\pm$)
yield $c-1$ further eigenvectors of $\mathcal L_\mH$, since all of them lie in the null space of both $\mathcal L_\mG$ (because the restrictions of the alternating function $\widetilde{\mathbf{1}}_{\#\mV}$ to each connected component of $\mG$ span the null space of the signless Laplacian $\mathcal L_\mG$) and $\widetilde{J}_{+,-}$, and hence of $\mathcal L_\mH$.

Finally,  $\mathcal L_\mG$ has eigenvectors that are not connected component-wise constant: each of them is orthogonal to $\widetilde{\mathbf{1}}_{\#\mV}$ and, hence, lies in the null space of $\widetilde{J}_{+,-}$, leading to $\#\mV-c$ further non-zero eigenvalues of $\mathcal L_\mH$. Summing up, the spectrum of $\mathcal L_\mH$ contains 
\begin{itemize}
\item $\#\mV$, 
\item 0 with multiplicity $c-1$ (if $c>1$), and 
\item all $\#\mV-c$ nonzero eigenvalues of $\mathcal L_\mG$.
\end{itemize}
Counting multiplicity, these are $\#\mV$ eigenvalues and, hence, exhaust the spectrum of $\mathcal L_\mH$.
\end{proof}

\begin{lemma}\label{lem:spectralgeomhyper-2}
Let the directed hypergraphs $\mH$ be the union of a connected graph $\mG$ on an even number of vertices, $\#\mV=2h$,  and a hypergraph consisting of a unique hyperedge $\me$ that contains all the vertices of $\mG$ and such that  $\#\mesour=\#\metarg=h$.

Then the lowest eigenvalue of $\mathcal L_\mH$ is 0, it is simple, and the corresponding eigenspace is spanned by a strictly positive vector.
\end{lemma}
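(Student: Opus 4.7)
The plan is to show that this is essentially an immediate consequence of \autoref{prop:expstable-ex1}, so the whole argument reduces to verifying condition (i) of that proposition for the hyperedge at hand and then adding the positivity observation.

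First, I would set $\mH'$ to be the directed hypergraph in the statement and write it as the union of the connected graph $\mG$ and the one-hyperedge hypergraph $\mH$ given by the single hyperedge $\me$. To apply \autoref{prop:expstable-ex1}, I need to check that $\mathbf{1}_{\#\mV}\in\ker\mathcal I_\mH^\top$. But by the general formula in \autoref{rem:notation-after-josmul}, the entry of $\mathcal I_\mH^\top\mathbf{1}_{\#\mV}$ at $\me$ equals $\#\metarg-\#\mesour$, and the assumption $\#\mesour=\#\metarg=h$ is precisely the equipotency of $\me$, so this difference is zero. Hence (i) holds.

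By the equivalence (i)$\Leftrightarrow$(ii) in \autoref{prop:expstable-ex1}, $\ker\mathcal L_{\mH'}$ is therefore spanned by $\mathbf{1}_{\#\mV}$. Since $\mathcal L_{\mH'}$ is positive semi-definite, $0$ is its lowest eigenvalue; one-dimensionality of the kernel yields simplicity; and $\mathbf{1}_{\#\mV}$ is obviously a strictly positive spanning vector of the corresponding eigenspace. This completes the chain of implications.

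There is no serious obstacle here: the only non-trivial input is the observation that the parity assumption $\#\mV=2h$ together with the balance $\#\mesour=\#\metarg=h$ is exactly what makes the constant function annihilate $\mathcal I_\me^\top$. All the spectral content has already been done once and for all in \autoref{prop:expstable-ex1}, and the strict positivity of the Perron-type eigenvector is automatic since a constant function is a legitimate null vector.
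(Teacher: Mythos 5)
Your proof is correct and follows essentially the same route as the paper: both reduce the statement to \autoref{prop:expstable-ex1} by noting that the balanced hyperedge is equipotent, so $\mathbf{1}_{\#\mV}$ lies in $\ker\mathcal I_\me^\top$, whence the kernel of $\mathcal L_\mH$ is spanned by the strictly positive constant vector and $0$ is the simple lowest eigenvalue by positive semidefiniteness.
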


\begin{proof}
This is an immediate consequence of \autoref{prop:expstable-ex1}, since the hyperedge is equipotent and, hence,  $\mathbf{1}_{\#\mV}$ lies in the null space of transpose of its incidence matrix; accordingly the null space of $\mathcal L_{\mH}$ is spanned by $\frac{1}{\sqrt{\#\mV}}\mathbf{1}_{\#\mV}$ and, therefore, the orthogonal projector onto null space is $\frac{1}{\#\mV}J_{\#\mV,\#\mV}$.
\end{proof}

In view of the results on the eigenvalue 0, we immediately deduce the following from the Spectral Theorem.

\begin{cor}\label{cor:frogexponential}
Under the assumptions of either \autoref{lem:spectralgeomhyper} or \autoref{lem:spectralgeomhyper-signless}, $(\e^{-t\mathcal L_\mH})_{t\ge 0}$ is exponentially stable if and only if $\mG$ is connected.

Under the assumption of \autoref{lem:spectralgeomhyper-2}, $(\e^{-t\mathcal L_\mH})_{t\ge 0}$ is not exponentially stable.
\end{cor}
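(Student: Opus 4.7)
My plan is to observe that for a self-adjoint, positive semidefinite operator $\mathcal L_\mH$ on a finite-dimensional space, the semigroup $(\e^{-t\mathcal L_\mH})_{t\ge 0}$ is exponentially stable if and only if $0\notin \sigma(\mathcal L_\mH)$, i.e., if and only if the lowest eigenvalue $\lambda_1(\mH)$ is strictly positive. This is a direct consequence of the Spectral Theorem, since in that case one can diagonalise $\mathcal L_\mH$ and $\|\e^{-t\mathcal L_\mH}\|\le \e^{-t\lambda_1(\mH)}$, whereas if $0\in \sigma(\mathcal L_\mH)$ then $\mathcal L_\mH$ possesses a non-trivial null space on which $\e^{-t\mathcal L_\mH}$ acts as the identity for every $t\ge 0$.

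First, under the hypotheses of \autoref{lem:spectralgeomhyper} or \autoref{lem:spectralgeomhyper-signless}, I would simply invoke the explicit spectral description given there: the spectrum of $\mathcal L_\mH$ consists of $\#\mV$, of the non-zero eigenvalues of $\mathcal L_\mG$, and of $0$ with multiplicity $c-1$, where $c$ is the number of connected components of $\mG$. Hence $0$ fails to be an eigenvalue of $\mathcal L_\mH$ precisely when $c=1$, i.e., precisely when $\mG$ is connected. Combined with the equivalence in the preceding paragraph, this yields the first assertion.

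For the second assertion, under the assumptions of \autoref{lem:spectralgeomhyper-2} the lowest eigenvalue of $\mathcal L_\mH$ is explicitly asserted to be $0$ (and to be simple, with a strictly positive eigenvector spanning its eigenspace). In particular $0\in \sigma(\mathcal L_\mH)$, so $(\e^{-t\mathcal L_\mH})_{t\ge 0}$ fails to be exponentially stable.

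There is essentially no obstacle here: once the spectral content of \autoref{lem:spectralgeomhyper}, \autoref{lem:spectralgeomhyper-signless} and \autoref{lem:spectralgeomhyper-2} is in hand, the corollary is a one-line deduction from the Spectral Theorem. The only minor point worth flagging is that exponential stability is equivalent to $\lambda_1(\mathcal L_\mH)>0$ precisely because $\mathcal L_\mH$ is self-adjoint and positive semidefinite (cf.\ \autoref{lem:gersh0}); without this, one would have to distinguish between the spectral abscissa and the growth bound of the semigroup.
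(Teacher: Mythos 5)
Your proposal is correct and is essentially the paper's own argument: the paper likewise deduces the corollary "immediately from the Spectral Theorem" using the eigenvalue-$0$ information in \autoref{lem:spectralgeomhyper}, \autoref{lem:spectralgeomhyper-signless} and \autoref{lem:spectralgeomhyper-2}, exactly as you spell out. Your explicit remark that exponential stability is equivalent to $\lambda_1(\mathcal L_\mH)>0$ because $\mathcal L_\mH$ is self-adjoint and positive semidefinite is just the detail the paper leaves implicit.
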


It seems to be not easy to apply further surgical principles to derive eigenvalue estimates or comparison principles and, therefore, deduce information about the large-time behaviour of the heat flow. Indeed, the spectral theory of a directed hypergraph is much subtler than that of a graph: this can be seen already with a toy model.

\begin{exa}
Let us relax the notion of incidence matrix of a directed hypergraph and, inspired by the notion of stoichiometric matrix in chemical physics \cite{PolEsp14}, introduce the generalised incidence matrix
\[
\mathcal I_\varepsilon=
\begin{pmatrix}
-1\\
1\\
\varepsilon
\end{pmatrix},\qquad \varepsilon\ge 0.
\]
If $\varepsilon=0$, we are considering a disconnected graph consisting of three vertices $\mv_1,\mv_2,\mv_3$ and an edge connecting $\mv_1,\mv_2$: its eigenvalues are $0,0,2$.

If, however, $\varepsilon>0$, we are ``switching on'' a hypergraph-type behaviour by adding an interaction between $\mv_3$ and the existing edge.
Then one sees that the spectrum becomes $0,0,2+\varepsilon^2$. However, two of the three corresponding eigenvectors do not converge as $\varepsilon\to 0$, i.e., the eigenvectors for $\varepsilon=0$ cannot be simply transplanted into those for $\varepsilon>0$; indeed, 
\[
\hbox{it is \underline{not} true that}\quad \|\mathcal I^\top_\varepsilon f\|_{\ell^2(\mE)}^2\ge \|\mathcal I^\top_0 f\|_{\ell^2(\mE)}^2\quad\hbox{for all }f=(f_1,f_2,f_3)^\top\in \K^\mV.
\]
Therefore, the monotonicity of the spectrum with respect to $\varepsilon$ cannot be seen at the level of the Rayleigh quotient alone.
\end{exa}

%Let us also observe how heat flows on graphs can be made exponentially stable by adding hyperedges all of whose vertices are co-oriented.
%
%{\color{red}
%\begin{prop}
%Let $\mG$ be a connected graph. Let $\mH$ be the directed hypergraph obtained adding to $\mG$ any hyperedge all of whose vertices are co-oriented. Then $(\e^{-t\mathcal L_{\mH}})_{t\ge 0}$ is exponentially stable.
%\end{prop}
%
%\begin{proof}
%It suffices to show that $0$ is not an eigenvalue; and because by \autoref{eq:hyperfiedler} adding an edge will raise all eigenvalues, by the Spectral Theorem it is sufficient to show that the ground state energy is strictly positive. Let the hyperedge be, without loss of generality, $\me=\vv{\emptyset,\{\mv_1,\ldots,\mv_n\}}$. COMPLETE!!!
%\end{proof}
%}

%{\color{red}
%Generalizzare la formula di Birkhoff--Varga al caso di matrici che generano semigruppi positivi (anche se non sono esse stesse positive). Cosa si può dire di nuovo? Quali operazioni portano ad alzare i coefficienti della matrice?
%}
%

\section{Order theoretical aspects}\label{sec:ordertheor}

\subsection{Positivity and eventual positivity}

A positive semigroup $(T(t))_{t\ge 0}$ that is $\infty$-contractive is said to be \emph{sub-Markovian}: this is equivalent to the invariance of the order interval $[-\mathbf{1},\mathbf{1}]$ under the heat flow, i.e., to $T(t)\mathbf{1}\le \mathbf{1}$ for all $t\ge 0$. If, additionally, $T(t)\mathbf{1}=\mathbf{1}$ for all $t\ge 0$, then the semigroup is said to be \textit{Markovian}, and its adjoint is said to be \textit{stochastic}.

The Markovian property of the heat flow on graphs has been regarded as one of its most remarkable properties, ever since~\cite{BeuDen59}.
On hypergraphs, it is convenient to relax the notion of positivity of the heat flow. Let us recall the following.

\begin{defi}
A semigroup $(T(t))_{t\ge 0}$ on a finite dimensional Banach lattice $E$ with positive cone $E_+$ is said to be 
\begin{itemize}
\item \emph{asymptotically positive} if $\mathrm{dist } (\e^{t(A-s(A))}f, E_+)\to 0$ as $t\to\infty$ for all $f\ge 0$;
\item \emph{positive} if $T(t)f\ge 0$ for all $t\ge 0$ and all $f\ge 0$; 
\item \emph{irreducible} if is positive and $T(t)f\ge c\mathbb 1$ for some $c>0$, all $t\ge 0$ and all $f\ge 0$; 
\item \emph{eventually positive} if there is $t_0>0$ such that $T(t)f\ge 0$ for some $c>0$, all $t\ge t_0$ and all $f\ge 0$; 
\item \emph{eventually irreducible} if it is eventually positive and there is $t_0>0$ such that $T(t)f\ge c\mathbb 1$ for some $c>0$, all $t\ge 0$ and all $f\ge 0$.
\end{itemize}
\end{defi}

In finite dimensional Hilbert lattices, the first Beurling--Deny condition that characterises a semigroup's positivity can be simplified: it is well-known (see, e.g., \cite[Theorem~7.1]{BatKraRha17}) that the semigroup $(\e^{-tA})_{t\ge 0}$ generated by a real matrix $-A$ is positive if and only if $A$ is a Z-matrix, i.e., it has real entries and its off-diagonal entries are non-negative. Therefore, in view of \eqref{eq:josmul-basic} we immediately obtain the following.

\begin{prop}\label{prop:posit}
Let $\mH$ be a directed hypergraph. Then
$(\e^{-t\mathcal L})_{t\ge 0}$ is positive if and only if
\begin{equation}\label{eq:M-matr-hypercond}
\#\mE^{\backslash\!\backslash}_{\mv\mw}\le \#\mE^{\not\backslash\!\backslash}_{\mv\mw}\qquad\hbox{for all }\mv,\mw\in \mV.
\end{equation}
\end{prop}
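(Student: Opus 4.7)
The plan is entirely transparent: the statement reduces to a direct application of the matrix-theoretic version of the first Beurling--Deny criterion, which is recalled just before the proposition. Namely, the semigroup $(\e^{-t\mathcal L})_{t\ge 0}$ generated by $-\mathcal L$ is positive if and only if $\mathcal L$ has non-positive off-diagonal entries (equivalently, its negative has non-negative off-diagonals, which is the standard characterisation of matrix generators of positive semigroups on $\R^{\mV}$).

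Combining this with the combinatorial formula \eqref{eq:josmul-basic} for the off-diagonal entries of $\mathcal L$, namely
\[
\mathcal L_{\mv\mw} = \#\mE^{\backslash\!\backslash}_{\mv\mw} - \#\mE^{\not\backslash\!\backslash}_{\mv\mw}\qquad\text{for all } \mv\ne \mw\in\mV,
\]
I obtain immediately that $\mathcal L_{\mv\mw}\le 0$ for all $\mv\ne \mw$ is equivalent, vertex by vertex, to the condition \eqref{eq:M-matr-hypercond}. Both implications thus follow at once from the cited characterisation. I do not foresee any genuine obstacle: the proof is a one-line verification, and the only minor subtlety is keeping careful track of sign conventions when identifying $\mathcal L$ as the negative of the semigroup's generator. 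Note that the diagonal entries, being $\deg(\mv)\ge 0$, play no role in the criterion and therefore need not be discussed.
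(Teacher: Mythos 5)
Your proof is correct and is precisely the paper's argument: the author likewise derives the proposition by combining the Z-matrix characterisation of positive matrix semigroups (the finite-dimensional form of the first Beurling--Deny criterion) with the entrywise formula \eqref{eq:josmul-basic} for $\mathcal L_{\mv\mw}$. Nothing further is needed.
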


\begin{cor}
Given a directed hypergraph $\mH=(\mV,\mE_\mH)$, there exists a multigraph $\mG=(\mV,\mE_\mG)$ such that $(\e^{-t\mathcal L_{\mH'}})_{t\ge 0}$ is positive, where $\mH'$ is the union of $\mH$ and $\mG$.
\end{cor}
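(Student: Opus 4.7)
The plan is to reduce the problem, via \autoref{prop:posit}, to a purely combinatorial counting task: namely, to show that one can add enough graph edges to $\mH$ so that, in the resulting union $\mH'$, every unordered pair of distinct vertices has at least as many hyperedges in which the two vertices are anti-oriented as hyperedges in which they are co-oriented. The key observation is that a directed graph edge between two vertices $\mv,\mw$ -- i.e., a hyperedge $\me$ with $\mesour=\{\mv\}$ and $\metarg=\{\mw\}$ (or vice versa) -- has the property that $\mv,\mw$ are anti-oriented in $\me$ (since $\iota_{\mv\me}\iota_{\mw\me}=-1$), while no third vertex is affected. Thus each such added edge increments $\#\mE^{\not\backslash\!\backslash}_{\mv\mw}$ by one and leaves every other quantity $\#\mE^{\backslash\!\backslash}_{\mv'\mw'},\#\mE^{\not\backslash\!\backslash}_{\mv'\mw'}$ untouched.

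Concretely, I would define the multigraph $\mG=(\mV,\mE_\mG)$ as follows: for each unordered pair $\{\mv,\mw\}\subset \mV$ with $\mv\ne\mw$ satisfying
\[
\#\mE^{\backslash\!\backslash}_{\mv\mw}(\mH)>\#\mE^{\not\backslash\!\backslash}_{\mv\mw}(\mH),
\]
include in $\mE_\mG$ exactly
\[
n_{\mv\mw}:=\#\mE^{\backslash\!\backslash}_{\mv\mw}(\mH)-\#\mE^{\not\backslash\!\backslash}_{\mv\mw}(\mH)
\]
parallel edges between $\mv$ and $\mw$ (the choice of orientation for each such edge is immaterial, since both orientations render $\mv,\mw$ anti-oriented). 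For all other pairs let $n_{\mv\mw}:=0$.

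By the observation above, for every pair $\{\mv,\mw\}$ of distinct vertices, the counts in the union $\mH'$ satisfy
\[
\#\mE^{\backslash\!\backslash}_{\mv\mw}(\mH')=\#\mE^{\backslash\!\backslash}_{\mv\mw}(\mH),\qquad
\#\mE^{\not\backslash\!\backslash}_{\mv\mw}(\mH')=\#\mE^{\not\backslash\!\backslash}_{\mv\mw}(\mH)+n_{\mv\mw},
\]
so that by construction $\#\mE^{\backslash\!\backslash}_{\mv\mw}(\mH')\le \#\mE^{\not\backslash\!\backslash}_{\mv\mw}(\mH')$ in all cases. \autoref{prop:posit} then immediately yields positivity of $(\e^{-t\mathcal L_{\mH'}})_{t\ge 0}$.

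There is really no hard step here: the only potential subtlety is the bookkeeping that a graph edge between $\mv$ and $\mw$ contributes only to the $(\mv,\mw)$-pair's anti-oriented count, which is why we can treat each pair independently without spoiling other pairs. No monotonicity argument for the full Laplacian is needed -- the characterisation of positivity from \eqref{eq:M-matr-hypercond} decouples the sign condition across pairs, which is precisely what makes the construction straightforward.
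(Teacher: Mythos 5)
Your proof is correct and follows essentially the same route as the paper: the paper's one-line argument also adds (multi)edges between each pair $\mv,\mw$ with $\mathcal L_{\mv\mw}>0$, noting that each such edge lowers the off-diagonal entry by one (equivalently, raises $\#\mE^{\not\backslash\!\backslash}_{\mv\mw}$ by one without touching other pairs), until the criterion of \autoref{prop:posit} holds. Your version just makes the bookkeeping with the counts $\#\mE^{\backslash\!\backslash}_{\mv\mw},\#\mE^{\not\backslash\!\backslash}_{\mv\mw}$ and the number $n_{\mv\mw}$ of parallel edges explicit.
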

\begin{proof}
If $\mathcal L_{\mv\mw}>0$,
adding an edge between $\mv,\mw$ raises $\deg(\mv),\deg(\mw)$ by 1 and lowers $\mathcal L_{\mv\mw},\mathcal L_{\mw\mv}$ by one.
\end{proof}
\autoref{exa:graphyper-compare} shows that the perturbed semigroup $(\e^{-t\mathcal L_{\mH'}})_{t\ge 0}$ need not be sub-Markovian, though.

\begin{rem}
\begin{enumerate}[(1)]
\item Because by definition $\#\mE^{\backslash\!\backslash}_{\mv\mw}$ (resp., $\#\mE^{\not\backslash\!\backslash}_{\mv\mw}$) is the cardinality of the set
\[
\{\me\in \mE: \iota_{\mv\me}\iota_{\mw}=+1 \}
\qquad\hbox{(resp., of }\{\me\in \mE: \iota_{\mv\me}\iota_{\mw}=-1 \}\ ),
\]
the condition \eqref{eq:M-matr-hypercond} can be checked looking at the incidence matrix alone, without any need to explicitly compute the Laplace.

\item
In the particular case of a graph, \autoref{prop:posit} returns the known facts that minus the Laplacian of an oriented graph (the usual graph Laplacian) generates a positive semigroup; whereas minus the Laplacian of an undirected graph (the signless Laplacian) generates a non-positive semigroup.

\item
On a graph $\mG$, $\mathcal L$ and hence $(\e^{-t\mathcal L})_{t\ge 0}$ are irreducible if and only if $\mG$ is connected. In the case of directed hypergraphs, there seems to be no universally accepted notion of connectedness, least so one that is linked to the behaviour of the heat flow.
\end{enumerate}
\end{rem}

Several notions related to eventual positivity can be characterised as follows, by~\cite[Proposition~6.2.7, Theorem~7.3.3 and Theorem~10.2.1]{Glu16}.
% and an easy consequence of \cite[Theorem~7.3.3]{Glu16} (observe that the property of eventual irreducibility and eventual strong positivity considered in~\cite{Glu16} coincide on finite dimensional Hilbert lattices).

\begin{lemma}\label{lem:glu}
Let  $A$ be a self-adjoint operator on a finite dimensional Hilbert lattice. Denote by $P$ the orthogonal projector onto the eigenspace $E_{s(A)}$ associated with the spectral bound $s(A)$.
 Then the following assertions hold.
\begin{enumerate}[(1)]
\item\label{item:isem25eventpos} If $(\e^{tA})_{t\ge 0}$ is eventually positive, then $E_{s(A)}$ contains a positive vector.
\item\label{item:gluasympos} $(\e^{tA})_{t\ge 0}$ is asymptotically positive if and only if $P$ is a positive matrix;
\item\label{item:glueventirr} $(\e^{tA})_{t\ge 0}$ is eventually irreducible if and only if $s(A)$ is a simple eigenvalue and $E_{s(A)}$ is spanned by a strictly positive vector.
\end{enumerate}
\end{lemma}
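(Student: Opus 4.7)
The plan is to reduce everything to the spectral decomposition
\[
A = \sum_{k=1}^m \lambda_k P_k,\qquad \lambda_1 = s(A) > \lambda_2 > \cdots > \lambda_m,
\]
with $P = P_1$ the orthogonal projector onto $E_{s(A)}$. Since $A$ is self-adjoint on a finite-dimensional Hilbert space, the $P_k$ are mutually orthogonal, sum to the identity, and
\[
\e^{-t s(A)}\e^{tA} = P + \sum_{k\ge 2} \e^{t(\lambda_k - s(A))} P_k \xrightarrow[t\to\infty]{} P
\]
in operator norm, with exponential rate $s(A) - \lambda_2$. All three assertions then become statements about the projector $P$.

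For assertion (\ref{item:isem25eventpos}), eventual positivity yields $\e^{tA} f \ge 0$ for $t$ sufficiently large and $f \ge 0$. Pick any $f \ge 0$ with $Pf \neq 0$: such $f$ exists, since the positive cone spans the whole space and $P$ cannot annihilate all of it. Multiplying by $\e^{-t s(A)}$ and passing to the limit then gives $Pf \ge 0$, a non-zero positive element of $E_{s(A)}$. For assertion (\ref{item:gluasympos}), the uniform convergence above implies that, for every $f \ge 0$,
\[
\mathrm{dist}\bigl(\e^{t(A-s(A))}f,\, E_+\bigr) \longrightarrow \mathrm{dist}(Pf,\, E_+).
\]
Hence asymptotic positivity is equivalent to $Pf \in E_+$ for every $f \in E_+$, which by linearity is precisely the condition that $P$ be a positive matrix.

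Assertion (\ref{item:glueventirr}) is the most delicate. For ``$\Leftarrow$'', if $E_{s(A)} = \mathrm{span}(\phi)$ with $\phi$ strictly positive, then $P = \phi\phi^\top/\|\phi\|^2$ has strictly positive entries, so $Pf = \langle f,\phi\rangle\phi/\|\phi\|^2$ is strictly positive for every non-zero $f \ge 0$; combining with the uniform convergence $\e^{t(A-s(A))}\to P$ yields a constant and a time beyond which $\e^{tA} f \ge c\mathbf{1}$. For ``$\Rightarrow$'', eventual irreducibility forces $Pf$ to be strictly positive for every non-zero $f \ge 0$; applied to the standard basis vectors, this shows that $P$ itself has strictly positive entries. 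Since $P$ is a projector, its only non-zero eigenvalue is $1$, and by the Perron--Frobenius theorem applied to the (symmetric) positive matrix $P$, this eigenvalue must be simple; hence $\dim E_{s(A)} = \mathrm{rank}(P) = 1$, and the strictly positive Perron eigenvector spans $E_{s(A)}$.

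The main obstacle lies in the ``$\Rightarrow$'' part of (\ref{item:glueventirr}): the qualitative statement of eventual irreducibility has to be converted into the algebraic statement that $P$ has strictly positive entries, after which Perron--Frobenius collapses $E_{s(A)}$ to a single ray. A secondary subtlety concerns the (slightly fuzzy) dependence of the constant $c$ on the initial datum in the definitions of irreducibility and eventual irreducibility; in our finite-dimensional setting the uniform convergence of $\e^{t(A-s(A))}$ to $P$ makes this immaterial.
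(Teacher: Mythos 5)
Parts (1) and (2) of your argument are correct, and since the paper itself disposes of this lemma by citing G\"uck's thesis rather than proving it, a self-contained proof via the spectral decomposition is a perfectly reasonable (and more elementary) route. The problem is the necessity direction of (3). You assert that ``eventual irreducibility forces $Pf$ to be strictly positive for every non-zero $f\ge 0$'' and then build everything on that, but you never prove it, and the only argument your setup makes available --- pass to the limit in $\e^{t(A-s(A))}f$ --- does not deliver it: a limit of strictly positive vectors is in general only nonnegative (the lower bound $\e^{tA}f\ge c\,\mathbf 1$ rescales to $c\,\e^{-ts(A)}\mathbf 1$, which is not bounded away from $0$ uniformly in $t$ when $s(A)>0$). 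Eventual positivity alone only yields $P\ge 0$ entrywise, and a nonnegative $P$ with zero entries is \emph{a priori} compatible with every finite-time matrix $\e^{tA}$ having strictly positive entries, because the strict positivity of an entry can be carried entirely by the exponentially decaying terms $\e^{t(\lambda_k-s(A))}P_k$. So the one step you yourself single out as ``the main obstacle'' is exactly the step that is missing; naming the obstacle is not the same as overcoming it.

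The gap is fixable with one extra idea: apply the eventual-irreducibility hypothesis to vectors lying in the eigenspace itself, where the semigroup acts as a scalar. From eventual positivity you get $P\ge 0$ entrywise, so each column $Pe_j$ is a nonnegative element of $E_{s(A)}$. If $Pe_j\ne 0$, then $\e^{tA}(Pe_j)=\e^{ts(A)}Pe_j\ge c\,\mathbf 1$ for large $t$ forces $Pe_j$ to have strictly positive entries (no limit is taken, so nothing is lost). If instead $Pe_j=0$ for some $j$, pick $l$ with $Pe_l\ne 0$; then $Pe_l$ is strictly positive by the previous sentence, yet its $j$-th entry equals $P_{jl}=P_{lj}=(Pe_j)_l=0$ by symmetry of $P$ --- a contradiction. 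Hence every column of $P$ is strictly positive, i.e.\ $P$ has strictly positive entries, and from there your Perron--Frobenius argument (idempotent, symmetric, entrywise positive, so the eigenvalue $1$ is simple and $\operatorname{rank}P=1$ with a strictly positive spanning vector) goes through as written. Alternatively, you could simply do what the paper does and quote the corresponding results from Gl\"uck's thesis, but if you want the proof to be self-contained this is the missing link you must supply.
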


The heat flow on a directed hypergraph may or may not be eventually irreducible. Let us provide a necessary condition.

\begin{lemma}\label{lem:ranknul-noevenirred}
Let $\mH$ be a directed hypergraph. 
If $\#\mV\ge \#\mE+2$, then the semigroup $(\e^{-t\mathcal L})_{t\ge 0}$ is not eventually irreducible. If $\#\mE\ge \#\mV+2$, then the dual semigroup $(\e^{-t\mathcal L^*})_{t\ge 0}$ is not eventually irreducible.
\end{lemma}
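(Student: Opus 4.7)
The plan is to reduce the statement to the characterisation of eventual irreducibility in \autoref{lem:glu}.(\ref{item:glueventirr}), which requires the spectral bound of the generator $-\mathcal L$ to be a \emph{simple} eigenvalue. Since $\mathcal L = \mathcal I \mathcal I^\top$ is positive semi-definite, the eigenvalues of $-\mathcal L$ are all non-positive, and the spectral bound of $-\mathcal L$ equals $-\lambda_1(\mathcal L)$; accordingly, the eigenspace $E_{s(-\mathcal L)}$ is precisely the eigenspace of $\mathcal L$ associated with its smallest eigenvalue $\lambda_1(\mathcal L)$. So eventual irreducibility of $(\e^{-t\mathcal L})_{t\ge 0}$ in particular forces $\lambda_1(\mathcal L)$ to be a simple eigenvalue.

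For the first statement, I would invoke the rank-nullity bound in \autoref{lem:gersh0}: whenever $\#\mV\ge \#\mE$, one has
\[
\dim \ker \mathcal L_\mH \ge \#\mV-\#\mE.
\]
Under the hypothesis $\#\mV\ge \#\mE+2$, this gives $\dim \ker \mathcal L_\mH \ge 2$, so in particular $0$ is an eigenvalue of $\mathcal L_\mH$. Since $\mathcal L_\mH$ is positive semi-definite, $\lambda_1(\mathcal L_\mH)=0$, and this smallest eigenvalue has geometric (and algebraic, by self-adjointness) multiplicity at least $2$, hence is \emph{not} simple. By \autoref{lem:glu}.(\ref{item:glueventirr}), $(\e^{-t\mathcal L})_{t\ge 0}$ cannot be eventually irreducible.

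The second statement follows in exactly the same manner applied to the dual Laplacian $\mathcal L_{\mH^*} = \mathcal I^\top \mathcal I$: \autoref{lem:gersh0} gives $\dim \ker \mathcal L_{\mH^*} \ge \#\mE-\#\mV\ge 2$ whenever $\#\mE\ge \#\mV+2$, so once more $\lambda_1(\mathcal L_{\mH^*})=0$ has multiplicity at least $2$ and the dual semigroup fails to be eventually irreducible. No step really presents a genuine obstacle here: the argument is essentially a one-line consequence of the dimension-counting identity \eqref{eq:ranknul} combined with the spectral characterisation recalled in \autoref{lem:glu}, and the only thing worth flagging is that passing from ``$\lambda_1$ not simple'' to ``not eventually irreducible'' is immediate because self-adjointness of $\mathcal L$ rules out any discrepancy between geometric and algebraic multiplicities at $0$.
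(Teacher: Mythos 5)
Your proof is correct and follows essentially the same route as the paper: the rank--nullity bound of \autoref{lem:gersh0} forces $\dim\ker\mathcal L\ge 2$, so the lowest eigenvalue is not simple and \autoref{lem:glu} rules out eventual irreducibility, with the dual case handled identically. Your additional remark that self-adjointness equates geometric and algebraic multiplicities is a harmless elaboration the paper leaves implicit.
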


\begin{proof}
By \autoref{lem:gersh0}, the dimension of the null space of the Laplacian $\mathcal L_{\mH}$ is at least $\#\mV-\#\mE$, hence the claim follows from \autoref{lem:glu} if $\#\mV-\#\mE\ge 2$. The second assertions can be proved likewise.
%.(\ref{item:eventirred}).
\end{proof}

It is known that given a finite set $\mV\ne \emptyset$, then $(\e^{-t\mathcal L_{\mG_1}})_{t\ge 0}$ does not dominate $(\e^{-t\mathcal L_{\mG_2}})_{t\ge 0}$ for any two given, different graphs $\mG_1,\mG_2$ with same vertex set $\mV$. 
Things are different when it comes to directed hypergraphs.

%\begin{prop}
%Let $\mG$ be a graph. Let $\mH$ be the directed hypergraph obtained adding to $\mG$ any hyperedge all of whose vertices are co-oriented. If \eqref{eq:M-matr-hypercond} is satisfied in $\mH$, then $(\e^{-t\mathcal L_{\mH}})_{t\ge 0}$ dominates $(\e^{-t\mathcal L_{\mG}})_{t\ge 0}$.
%\end{prop}
%
%\begin{proof}
%As pointed out in \cite[Section~4.2]{GluMug21}, the claimed domination property is equivalent to $\mathcal L_{\mH}$ dominating entrywise $\mathcal L_{\mG}$, i.e., $\mathcal L_{\mH}-\mathcal L_{\mG}$ being a positive matrix, provided $\mathcal L_H$ is an M-matrix. 
%
%In order to check this condition, let $\mv_1,\ldots.\mv_n$ denote the elements of $\me$ that is added to $\mG$ to form $\mH$. Then the diagonal entries of $\mathcal L_{\mH}-\mathcal L_{\mG}$ are 0, apart from those corresponding to $\mv_1,\ldots.\mv_n$, which are increased by 1. On the other hand, the off-diagonal entries of $\mathcal L_{\mH}-\mathcal L_{\mG}$ are unchanged, too, apart from all entries of $\mathcal L_{\mH}$ corresponding to pairs $\mv_i,\mv_j$, $i,j=1,\ldots,n$, which in view of \eqref{eq:josmul-basic} are all increased by 1.
%\end{proof}

\begin{prop}\label{prop:arorgluunion}
Under the assumptions of \autoref{lem:spectralgeomhyper}, the semigroup $(\e^{-t\mathcal L_\mH})_{t\ge 0}$ is positive if and only if $\mG$ is the complete graph: in this case, $(\e^{-t\mathcal L_\mH})_{t\ge 0}$ is dominated by $(\e^{-t\mathcal L_\mG})_{t\ge 0}$.

If $\mG$ is not the complete graph, then $(\e^{-t\mathcal L_\mH})_{t\ge 0}$ is not even asymptotically positive.  However, if additionally $\mG$ is connected, then $(\e^{-t\mathcal L_\mH})_{t\ge 0}$ is eventually dominated by $(\e^{-t\mathcal L_\mG})_{t\ge 0}$.
\end{prop}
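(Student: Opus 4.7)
The plan is as follows. First I would recall from the proof of \autoref{lem:spectralgeomhyper} (see \eqref{eq:thisisprec}) the decomposition $\mathcal L_\mH = J_{\#\mV,\#\mV} + \mathcal L_\mG$. Its off-diagonal entries are $1+(\mathcal L_\mG)_{\mv\mw}$, and since $\mG$ is a simple graph these equal $0$ when $\mv\mw$ is an edge and $1$ otherwise. By \autoref{prop:posit}, $(\e^{-t\mathcal L_\mH})_{t\ge 0}$ is positive if and only if all off-diagonal entries are non-positive, i.e.\ every pair of distinct vertices is joined by an edge of $\mG$, i.e.\ $\mG$ is complete. In that case $\mathcal L_\mG=\#\mV\cdot \Id_{\#\mV}-J_{\#\mV,\#\mV}$ and hence $\mathcal L_\mH=\#\mV\cdot \Id_{\#\mV}$; a direct power-series calculation using $J^2=\#\mV J$ yields $\e^{-t\mathcal L_\mG}=\e^{-\#\mV t}\Id_{\#\mV}+\tfrac{1-\e^{-\#\mV t}}{\#\mV}J_{\#\mV,\#\mV}$, which dominates $\e^{-t\mathcal L_\mH}=\e^{-\#\mV t}\Id_{\#\mV}$ entrywise.

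For the failure of asymptotic positivity when $\mG$ is not complete, I would simply combine the last assertion of \autoref{lem:spectralgeomhyper} -- the orthogonal projector onto the $\lambda_{\min}(\mathcal L_\mH)$-eigenspace is not a positive matrix -- with \autoref{lem:glu}.(\ref{item:gluasympos}) applied to $A=-\mathcal L_\mH$, whose spectral bound is $-\lambda_{\min}(\mathcal L_\mH)$ and whose spectral eigenspace agrees with that of $\mathcal L_\mH$ for $\lambda_{\min}$. This covers both the disconnected non-complete case and the connected non-complete case in a unified way.

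The main point is the domination assertion, and I would actually prove the stronger statement that $\e^{-t\mathcal L_\mG}\ge \e^{-t\mathcal L_\mH}$ for every $t\ge 0$ (not merely eventually). The key observation is that, because $\mathcal L_\mG\mathbf 1_{\#\mV}=0$, each row and column of $\mathcal L_\mG$ sums to zero, hence $J_{\#\mV,\#\mV}\mathcal L_\mG=\mathcal L_\mG J_{\#\mV,\#\mV}=0$: the two matrices commute. Therefore
\[
\e^{-t\mathcal L_\mH}=\e^{-tJ_{\#\mV,\#\mV}}\e^{-t\mathcal L_\mG}.
\]
Using $J_{\#\mV,\#\mV}^2=\#\mV\cdot J_{\#\mV,\#\mV}$ one gets $\e^{-tJ_{\#\mV,\#\mV}}=\Id_{\#\mV}-\tfrac{1-\e^{-\#\mV t}}{\#\mV}J_{\#\mV,\#\mV}$; and since $\mathbf 1_{\#\mV}^\top \e^{-t\mathcal L_\mG}=\mathbf 1_{\#\mV}^\top$, one has $J_{\#\mV,\#\mV}\e^{-t\mathcal L_\mG}=J_{\#\mV,\#\mV}$. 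Combining these yields
\[
\e^{-t\mathcal L_\mG}-\e^{-t\mathcal L_\mH}=\frac{1-\e^{-\#\mV t}}{\#\mV}J_{\#\mV,\#\mV}\ge 0\qquad\hbox{for all }t\ge 0.
\]

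The main obstacle I expect is recognising the commutativity of $J_{\#\mV,\#\mV}$ and $\mathcal L_\mG$, which rests crucially on connectedness via $\mathbf 1_{\#\mV}\in\ker\mathcal L_\mG$; once this is isolated, the rest is a routine functional calculus for rank-one perturbations. It is also this observation that explains why the statement holds with \emph{eventual} domination weakened to genuine domination, and why one cannot expect the analogous assertion on a disconnected $\mG$ (where $J_{\#\mV,\#\mV}$ no longer commutes with $\mathcal L_\mG$ in the same useful way).
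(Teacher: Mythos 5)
Your treatment of the first two assertions is essentially the paper's: the decomposition $\mathcal L_\mH=J_{\#\mV,\#\mV}+\mathcal L_\mG$ plus the Z-matrix criterion of \autoref{prop:posit} gives the positivity characterisation, and the failure of asymptotic positivity follows from \autoref{lem:spectralgeomhyper} together with \autoref{lem:glu}. (For the domination in the complete case you compute both semigroups explicitly, where the paper invokes the criterion of \cite[Section~4.2]{GluMug21}; both are fine, since there both semigroups are positive.)

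The last part, however, contains a genuine error: you are using the wrong notion of domination. Domination of $(\e^{-t\mathcal L_\mH})_{t\ge 0}$ by $(\e^{-t\mathcal L_\mG})_{t\ge 0}$ means $|\e^{-t\mathcal L_\mH}f|\le \e^{-t\mathcal L_\mG}|f|$ for all $f$, i.e.\ entrywise $|(\e^{-t\mathcal L_\mH})_{\mv\mw}|\le(\e^{-t\mathcal L_\mG})_{\mv\mw}$ -- not merely $\e^{-t\mathcal L_\mG}-\e^{-t\mathcal L_\mH}\ge 0$ entrywise, which is all your identity
$\e^{-t\mathcal L_\mG}-\e^{-t\mathcal L_\mH}=\frac{1-\e^{-\#\mV t}}{\#\mV}J_{\#\mV,\#\mV}$
delivers. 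When $\mG$ is not complete, $\e^{-t\mathcal L_\mH}$ has negative entries for small $t$, and the missing lower bound $-(\e^{-t\mathcal L_\mG})_{\mv\mw}\le(\e^{-t\mathcal L_\mH})_{\mv\mw}$ actually fails for small $t$: for non-adjacent $\mv\ne\mw$ one has $(\e^{-t\mathcal L_\mG})_{\mv\mw}=O(t^2)$ while $(\e^{-t\mathcal L_\mH})_{\mv\mw}\approx -t$, so your ``stronger statement'' of domination for all $t\ge 0$ is false. The paper's own Example~(4) in Section~\ref{sec:ordertheor} (the path graph on three vertices plus one hyperedge, which satisfies the hypotheses of \autoref{lem:spectralgeomhyper}) exhibits this concretely: domination only holds from $t_0\approx 1.006$ onwards. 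The paper instead deduces \emph{eventual} domination from \cite[Theorem 3.1]{AroGlu23}, using that $(\e^{-t\mathcal L_\mG})_{t\ge 0}$ is positive and irreducible (this is where connectedness enters) and that $(\e^{-t\mathcal L_\mH})_{t\ge 0}$ is exponentially stable by \autoref{cor:frogexponential}. Incidentally, your closing remark is also off: $\mathbf 1_{\#\mV}\in\ker\mathcal L_\mG$, hence $J_{\#\mV,\#\mV}\mathcal L_\mG=\mathcal L_\mG J_{\#\mV,\#\mV}=0$, holds for \emph{every} graph, connected or not; connectedness is not what makes the commutation work, but rather what makes $\e^{-t\mathcal L_\mG}$ converge to the strictly positive matrix $\frac1{\#\mV}J_{\#\mV,\#\mV}$ and $\e^{-t\mathcal L_\mH}$ decay to $0$ -- and indeed your identity \emph{could} be repaired into an alternative proof of eventual domination by combining it with exactly these two facts to verify the two-sided entrywise bound for large $t$.
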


\begin{proof}
%Observe that the contribution of a clique of $n$ vertices to the Laplacian of $\mG$ is a block matrix
%\begin{equation}\label{eq:glmu1}
%\begin{pmatrix}
%n-1 & -1 &\cdots & -1\\
%-1 & \ddots & &\\
%\vdots & & & -1\\
%-1 & \cdots &- 1 & n-1
%\end{pmatrix}
%\end{equation}
%which upon adding the hyperedge becomes
%\begin{equation}\label{eq:glmu2}
%\begin{pmatrix}
%n & 0 &\cdots & 0\\
%0 & \ddots & & \vdots \\
%\vdots & & \ddots &0\\
%0 & \cdots &0 & n
%\end{pmatrix}
%\end{equation}
%without affecting the rest of the matrix. 
As seen in the proof of \autoref{lem:spectralgeomhyper},
\(
\mathcal L_{\mH}=\mathcal L_\mG+J_{\#\mV,\#\mV}
\).
This shows that $\mathcal L_\mH$ is a (real) Z-matrix if and only if all off-diagonal entries of $\mathcal L_\mG$ are not larger than $-1$: this is the case if and only if $\mG$ is the complete graph.
This proves the first claim.

As pointed out in \cite[Section~4.2]{GluMug21}, provided $\mathcal L_\mG,\mathcal L_\mH$ are both a Z-matrices, the claimed domination property is equivalent to $-\mathcal L_{\mG}$ dominating entrywise $-\mathcal L_{\mH}$, i.e., $\mathcal L_{\mH}-\mathcal L_{\mG}$ being a positive matrix: this is clearly true, since this difference is $J_{\#\mV,\#\mV}$.

If $\mG$ is the complete graph, then the lowest eigenvalue of $\lambda_\mH$ is $\#\mV$ with multiplicity $\#\mV$, so the orthogonal projector onto the corresponding eigenspace is the identity. If, however, $\mG$ is \textit{not} the complete graph, then by \autoref{lem:spectralgeomhyper} the lowest eigenvalue is associated with an orthogonal projector that is not a positive matrix, hence by~\autoref{lem:glu} $(\e^{-t\mathcal L_\mH})_{t\ge 0}$ is not asymptotically positive.

If $\mG$ is connected, then the last assertion follows from \cite[Theorem 3.1]{AroGlu23} and \autoref{cor:frogexponential}, since $(\e^{-t\mathcal L_\mG})_{t\ge 0}$ is positive and irreducible, while $(\e^{-t\mathcal L_\mH})_{t\ge 0}$ is exponentially stable.
\end{proof}

Let us now consider a similar but different case, that proposed in \autoref{lem:spectralgeomhyper-2}: in that case, $(\e^{-t\mathcal L_\mH})_{t\ge 0}$ may or may not be positive, as the cases when $\mG$ is a path graph or a star show, respectively. Is there any domination relation between the semigroups generated by $-\mathcal L_\mH,-\mathcal L_\mG$?
Let $\varphi$ be the vector representing the incidence matrix corresponding to the hyperedge $\me$. Then 
\[
\mathcal I_{\mH}=
\begin{pmatrix}
\begin{tabular}{c|c}
$\varphi$ & $\mathcal I_\mG$ 
\end{tabular} 
\end{pmatrix}
\qquad\hbox{whence}\qquad
\mathcal I_\mH\mathcal I_\mH^\top =\varphi \varphi^\top + \mathcal I_\mG\mathcal I_\mG^\top 
\]
Now, half of the entries of the matrix $\varphi \varphi^\top$ are $1$ and half of the entries are $-1$:  accordingly neither $\mathcal L_\mG$ dominates $\mathcal L_\mH$, nor vice versa.
Indeed, the following stronger assertion holds.

\begin{prop}
Under the assumptions of \autoref{lem:spectralgeomhyper-2},
 the semigroup $(\e^{-t\mathcal L_\mH})_{t\ge 0}$ is eventually irreducible. Neither does $(\e^{-t\mathcal L_\mG})_{t\ge 0}$ eventually dominate $(\e^{-t\mathcal L_\mH})_{t\ge 0}$, nor vice versa.
\end{prop}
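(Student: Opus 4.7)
The plan for the first assertion is to combine \autoref{lem:spectralgeomhyper-2} with \autoref{lem:glu}.(\ref{item:glueventirr}): the former guarantees that $\lambda_1(\mH) = 0$ is a simple eigenvalue of $\mathcal L_\mH$ whose eigenspace is spanned by the strictly positive vector $\frac{1}{\sqrt{\#\mV}}\mathbf{1}_{\#\mV}$, which is precisely the condition required by the latter for eventual irreducibility of $(\e^{-t\mathcal L_\mH})_{t\ge 0}$.

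For the non-domination assertions, I would argue by contradiction through a row-sum rigidity argument. The critical observation is that both semigroups fix $\mathbf{1}_{\#\mV}$: for $\mathcal L_\mG$ this is automatic, while for $\mathcal L_\mH$ it follows from \autoref{lem:spectralgeomhyper-2} or, equivalently, from the equipotence $\#\mesour = \#\metarg$ of the single hyperedge $\me$, which gives $\mathbf{1}_{\#\mV} \in \ker \mathcal I_\mH^\top$ and hence $\mathcal L_\mH \mathbf{1}_{\#\mV} = 0$. Consequently, setting $D(t) := \e^{-t\mathcal L_\mG} - \e^{-t\mathcal L_\mH}$, one has $D(t)\mathbf{1}_{\#\mV} = 0$, i.e., every row of $D(t)$ sums to zero for every $t \ge 0$.

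If $(\e^{-t\mathcal L_\mG})_{t\ge 0}$ were to eventually dominate $(\e^{-t\mathcal L_\mH})_{t\ge 0}$, then $D(t) \ge 0$ entrywise for all $t \ge t_0$. Combined with the vanishing row sums, this forces every entry of $D(t)$ to be zero for $t \ge t_0$, hence $D \equiv 0$ on $[0,\infty)$ by real-analyticity in $t$. Differentiating at $t = 0$ then yields $\mathcal L_\mG = \mathcal L_\mH$, contradicting the identity $\mathcal L_\mH = \mathcal L_\mG + \varphi \varphi^\top$ derived in the paragraph preceding the statement: the rank-one perturbation $\varphi \varphi^\top$ is non-zero because the vector $\varphi$ associated with the hyperedge $\me$ has all entries equal to $\pm 1$. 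The symmetric argument, with the roles of $\mG$ and $\mH$ interchanged, rules out the reverse domination.

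The argument is in essence a rigidity statement: any non-trivial, pointwise eventual domination between two stochastic semigroups is forbidden, since a non-negative row with zero sum must vanish identically. I do not expect any genuine obstacle here; the only delicate point is the verification that $\mathcal L_\mH \mathbf{1}_{\#\mV} = 0$, and this is precisely where the equipotence hypothesis $\#\mesour = \#\metarg$ is essential. Without it, the row-sum cancellation would fail and the mechanism of the proof would collapse.
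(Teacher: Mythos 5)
Your proof is correct. For the first assertion you argue exactly as the paper does: \autoref{lem:spectralgeomhyper-2} supplies a simple lowest eigenvalue $0$ with strictly positive eigenvector, and \autoref{lem:glu}.(\ref{item:glueventirr}) converts this into eventual irreducibility.

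For the non-domination assertions your route differs from the paper's, which simply invokes an external eventual-domination criterion (Theorem~3.7 of the cited Gl\"uck--Mugnolo paper). Your argument is instead self-contained and elementary: since the hyperedge is equipotent, $\mathcal L_\mH\mathbf{1}=\mathcal L_\mG\mathbf{1}=0$, so both semigroups have all row sums equal to $1$; eventual domination forces the entrywise inequality $\e^{-t\mathcal L_\mG}-\e^{-t\mathcal L_\mH}\ge 0$ for large $t$ (note that, strictly speaking, domination means $|\e^{-t\mathcal L_\mH}f|\le \e^{-t\mathcal L_\mG}|f|$, which yields $(\e^{-t\mathcal L_\mG})_{\mv\mw}\ge |(\e^{-t\mathcal L_\mH})_{\mv\mw}|\ge (\e^{-t\mathcal L_\mH})_{\mv\mw}$, so your weaker consequence is all you need); a nonnegative matrix with zero row sums vanishes, analyticity propagates this to all $t\ge 0$, and differentiation at $t=0$ gives $\mathcal L_\mG=\mathcal L_\mH$, contradicting $\mathcal L_\mH=\mathcal L_\mG+\varphi\varphi^\top$ with $\varphi\ne 0$; the reverse domination is excluded symmetrically. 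What your approach buys is independence from the eventual-domination machinery and a transparent rigidity principle (two semigroups fixing $\mathbf 1$ cannot eventually dominate one another unless they coincide), at the price of being tailored to this row-sum situation, whereas the cited theorem handles more general pairs of self-adjoint semigroups. One cosmetic caveat: calling both semigroups ``stochastic'' in your closing remark is imprecise, since $(\e^{-t\mathcal L_\mH})_{t\ge 0}$ need not be positive; only the invariance of $\mathbf 1$ is used, and that is indeed all your argument requires.
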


\begin{proof}
The first assertion follows from \autoref{lem:spectralgeomhyper-2} and \autoref{lem:glu}, the second from \cite[Theorem~3.7]{GluMug21}.
\end{proof}

\subsection{$\infty$-contractivity and eventual $\infty$-contractivity}

Just like a semigroup's positivity is equivalent to the invariance of the order interval $[0,\infty)$ under the heat flow, its contractivity with respect to $\|\cdot\|_\infty$ amounts to the  invariance of the order interval $[-1,1]$. The following characterisation of the contractivity of $(\e^{-t\mathcal L})_{t\ge 0}$ with respect to $\|\cdot\|_\infty$ in terms of diagonal dominance is an immediate consequence of \cite[Lemma~6.1]{Mug07} and \eqref{eq:josmul-basic}.

\begin{lemma}\label{lem:mug07}
Let $\mH$ be a directed hypergraph. Then $(\e^{-t\mathcal L})_{t\ge 0}$ is $\infty$-contractive if and only if 
\begin{equation}\label{eq:diag-dom}
\sum\limits_{\mw\ne \mv}\left|\#\mE^{\backslash\!\backslash}_{\mv\mw}- \#\mE^{\not\backslash\!\backslash}_{\mv\mw}\right|\le \deg(\mv)\qquad \hbox{for all }\mv\in\mV.
\end{equation}
\end{lemma}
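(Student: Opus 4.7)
The plan is to invoke directly the abstract characterisation from \cite[Lemma~6.1]{Mug07}, which says that a semigroup $(\e^{-tA})_{t\ge 0}$ generated by a real matrix $-A$ on $\K^\mV$ is $\infty$-contractive if and only if $A$ is (weakly) diagonally dominant, i.e.,
\[
A_{\mv\mv}\ge \sum_{\mw\ne \mv}|A_{\mv\mw}|\qquad\text{for all }\mv\in\mV.
\]
This reduces the statement to a pure book-keeping exercise: one only needs to substitute $A=\mathcal L$ and read off the entries from the formula \eqref{eq:josmul-basic}.

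Concretely, I would proceed in two short steps. First, recall from \eqref{eq:josmul-basic} that the diagonal entry of $\mathcal L$ at $\mv$ equals $\deg(\mv)$, while for $\mv\ne \mw$ the off-diagonal entry is $\#\mE^{\backslash\!\backslash}_{\mv\mw}-\#\mE^{\not\backslash\!\backslash}_{\mv\mw}$, so that
\[
|\mathcal L_{\mv\mw}|=\left|\#\mE^{\backslash\!\backslash}_{\mv\mw}-\#\mE^{\not\backslash\!\backslash}_{\mv\mw}\right|.
\]
Second, insert these identities into the diagonal-dominance criterion: the condition $\mathcal L_{\mv\mv}\ge \sum_{\mw\ne \mv}|\mathcal L_{\mv\mw}|$ becomes precisely \eqref{eq:diag-dom}, and the two-sided equivalence from \cite[Lemma~6.1]{Mug07} transfers to give the desired ``if and only if''.

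There is no real obstacle here: once the abstract $\infty$-contractivity criterion is accepted, both directions follow from a single line of substitution. The only thing worth briefly checking is that $\mathcal L$ is real and symmetric (hence self-adjoint on $(\K^\mV,\|\cdot\|_2)$), so that the cited characterisation applies without any complex-valued caveats; this is already recorded immediately after the definition of $\mathcal L$.
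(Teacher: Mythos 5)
Your proposal is correct and is exactly the paper's argument: the lemma is stated there as an immediate consequence of \cite[Lemma~6.1]{Mug07} combined with the entry formula \eqref{eq:josmul-basic}, which is precisely your two-step substitution. Nothing further is needed.
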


Because $\mathcal L$ is hermitian, by duality we obtain that \eqref{eq:diag-dom} holds if and only if $(\e^{-t\mathcal L})_{t\ge 0}$ is $1$-contractive, too. We  consider a stronger property, namely stochasticity, at the end of this section.

\begin{rem}
\begin{enumerate}
\item The condition \eqref{eq:diag-dom} can be written as
\begin{equation}\label{eq:diag-dom-alt}
\sum\limits_{\mw\ne \mv}\left|\sum_{\me\in\mE}\iota_{\mv\me}\iota_{\mw\me}\right|\le \deg(\mv)\qquad \hbox{for all }\mv\in\mV
\end{equation}
and can hence be checked looking at the incidence matrix alone, without any need to explicitly compute the Laplace.

\item 
%\autoref{lem:mug07} poses a severe restriction to our ambition of studying the Markovian property of heat flows on directed hypergraphs. Indeed, 
Recall that a matrix $A$ is said to have \emph{factor width $k$} if $k$ is the smallest
integer such that $A$ can be factorised as $A = V V^\top$ for some real (rectangular) matrix $V$ each of whose columns contains at most $k$ non-zero entries: for example, Laplacians and signless Laplacians have factor width at most 2 (taking as $V$ the signed and signless incidence matrix, respectively) and so have \textit{adjacency matrices of generalised line graphs} \cite[\S~7.3]{Mug18} -- in fact exactly 2 because  matrices with factor width 1 are  precisely the diagonal matrices with nonnegative entries.
It is tempting to regard such real matrix $V$ with at most 2 non-zero entries on each column as a generalised incidence matrix of a  graph.
Clearly, both Laplacians and signless Laplacians of graphs satisfy \eqref{eq:diag-dom} and are, thus, diagonally dominant.

Conversely, it was proved in \cite[Theorems 8 and 9]{BomChePar05} that the matrices $A=(a_{ij})$ with factor width at most 2 are precisely those symmetric matrices with nonnegative diagonal that are diagonally dominant in a generalised sense, namely
\begin{equation}\label{eq:diag-dom-gener}
\sum\limits_{j\ne i}|a_{ij}|\xi_j \le |a_{ii}|\xi_i \qquad \hbox{for all }i
\end{equation}
holds for some $\xi\in \K^\mV$, $\xi>0$.
%This suggests that Laplacians on ``true'' hypergraphs cannot generally be expected of satisfying \eqref{eq:diag-dom}, upon interpreting .

\end{enumerate}

\end{rem}

Therefore: $(\e^{-t\mathcal L})_{t\ge 0}$ is generally not $\infty$-contractive.
%, unless $\mH$ is a directed graph and $\mathcal L_\mH$ is the usual graph Laplacian. 
We therefore introduced the following, inspired by the relaxed notions of positivity discussed in the last 10 years (see~\cite{Glu16} and references therein).

\begin{defi}
A semigroup $(\e^{t(A-s(A))})_{t\ge 0}$ on a finite dimensional Banach lattice $E$ with unit order interval $[-\mathbf{1},\mathbf{1}]$ is said to be 
\begin{itemize}
\item \emph{asymptotically $\infty$-contractive} if $\mathrm{dist } (\e^{t(A-s(A))}f, [-\mathbf{1},\mathbf{1}])\to 0$ as $t\to\infty$ for all $|f|\le \mathbf{1}$;
\item \emph{eventually $\infty$-contractive} if there is $t_0>0$ such that $|\e^{tA} f|\le \mathbf{1}$ for all $t\ge t_0$ and all $|f|\le \mathbf{1}$.
%\item 
%\item \emph{eventually strongly $\infty$-contractive} if there is $t_0>0$ such that $|\e^{tA}f|\le c\mathbf{1}$ for all $t\ge t_0$, all $|f|\le \mathbf{1}$, and some $c>1$.
\end{itemize}
\end{defi}

\begin{rem}\label{rem:inftyposobv}
\begin{enumerate}
%\item If $s(A)<0$, then $\e^{tA}\to 0$ and, in particular, $\e^{tA}$ leaves eventually invariant every closed convex subset of $E$ that contains 0 as an inner point. This is why, to avoid trivialities, the definitions of asymptotic positivity and $\infty$-contractivity are based on a rescaled semigroup.

\item\label{item:inftyposobv-nag}
The implication
\[
\e^{tA}\hbox{ is eventually positive and }A\mathbf{1}\le 0 \quad \Rightarrow \quad \e^{tA}\hbox{ is eventually $\infty$-contractive}
\]
follows immediately from \cite[Lemma B-III-2.1]{Nag86} (or rather, from the proof of the first implication therein).

\item\label{item:inftyposobv-equiv} 
Also, let $A$ be a finite Hermitian matrix. If the spectral bound $s(A)$ of $A$ is strictly negative, hence $\|\e^{tA}f\|_2\to 0$ as $t\to\infty$ for all $f$, then it follows from the equivalence of all norms in the finite dimensional setting that $\|\e^{tA}f\|_\infty\le 1$ for $t$ large enough and all $f$, i.e., $(\e^{tA})_{t\ge0}$ is eventually strongly $\infty$-contractive, too.

\item\label{item:inftycontr-neithernor} If $A$ is any complex number with strictly positive real part, then $(\e^{tA})_{t\ge 0}$ is asymptotically $\infty$-contractive but not eventually $\infty$-contractive.
Conversely, we will encounter at the end of Section~\ref{sec:fano} two examples $(\e^{-\mathcal L^\pm_2})_{t\ge 0}$ of semigroups that are  $\infty$-contractive, but not asymptotically $\infty$-contractive.
\end{enumerate}
\end{rem}
%
%For this reason, to avoid trivialities we now focus on the case $s(A)=0$.  
Let us  formulate the following.
%\footnote{\color{red}{Esempio nel caso di $e_1=1-0$; esempio da maple}}

\begin{prop}\label{prop:eventlinftycontr}
Let $(X,\mu)$ be a measure space and $A$ be a self-adjoint, negative semidefinite operator with compact resolvent on $L^2(X,\mu)$. Denote by $P$ the orthogonal projector onto the eigenspace $E_{s(A)}$ associated with the spectral radius $s(A)$.
 Then the following assertions are equivalent:

\begin{enumerate}[(i)]
%\item $(\e^{t(A-s(A))})_{t\ge 0}$ converges in operator norm to an $\infty$-contractive operator as $t\to\infty$;
\item $P$ is $\infty$-contractive;
\item $(\e^{tA})_{t\ge 0}$ is asymptotically $\infty$-contractive.
\end{enumerate}
%Then $(i)\Leftrightarrow(ii)$.
\end{prop}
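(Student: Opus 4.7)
The plan is to exploit the spectral decomposition afforded by the compact resolvent hypothesis. Since $A$ is self-adjoint, negative semidefinite and has compact resolvent, its spectrum is a discrete set of real eigenvalues of finite multiplicity, accumulating only at $-\infty$; in particular $s(A)$ is an isolated eigenvalue and the next largest eigenvalue $\lambda_1$ satisfies $\lambda_1<s(A)$. The spectral theorem then yields
\[
\e^{t(A-s(A))} = P + \e^{t(A-s(A))}(I-P),\qquad t\ge 0,
\]
with $\|\e^{t(A-s(A))}(I-P)\|_{\mathcal L(L^2)} \le \e^{t(\lambda_1-s(A))} \to 0$ as $t\to\infty$. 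In particular, $\e^{t(A-s(A))}f \to Pf$ as $t\to\infty$ for every $f$.

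For (ii)$\Rightarrow$(i), I would fix $f$ with $|f|\le \mathbf{1}$ and invoke the definition of asymptotic $\infty$-contractivity to produce elements $g_t\in [-\mathbf{1},\mathbf{1}]$ satisfying $\|\e^{t(A-s(A))}f - g_t\|_\infty \to 0$. Combined with $\e^{t(A-s(A))}f\to Pf$, this shows that $Pf$ lies in the closure (in $L^\infty$) of $[-\mathbf{1},\mathbf{1}]$; since $[-\mathbf{1},\mathbf{1}]$ is norm-closed, $|Pf|\le \mathbf{1}$, i.e., $P$ is $\infty$-contractive.

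For (i)$\Rightarrow$(ii), the argument is immediate: if $P$ is $\infty$-contractive, then $Pf\in [-\mathbf{1},\mathbf{1}]$ for every $f$ with $|f|\le \mathbf{1}$, whence
\[
\mathrm{dist}(\e^{t(A-s(A))}f,[-\mathbf{1},\mathbf{1}])\le \|\e^{t(A-s(A))}f - Pf\|_\infty \xrightarrow[t\to\infty]{} 0.
\]

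The main obstacle is that the convergence $\e^{t(A-s(A))}\to P$ delivered by the spectral theorem is a priori in the operator norm on $L^2(X,\mu)$, whereas both sides of the equivalence are formulated in terms of the $L^\infty$-norm. In the finite-dimensional applications of this proposition to hypergraph Laplacians on $\K^\mV$ (where $P$ has finite rank and its range consists of bounded functions) this difficulty is trivialised by the equivalence of all norms; in broader generality it is handled by observing that $P$ has finite-dimensional range and that the finite-rank decomposition reduces the required $L^\infty$-convergence to the $L^2$-one already established.
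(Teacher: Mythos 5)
Your argument is correct and takes essentially the same route as the paper's proof: both use the Spectral Theorem to obtain $\e^{t(A-s(A))}f\to Pf$ and then conclude by a triangle inequality in each direction (your closedness-of-$[-\mathbf{1},\mathbf{1}]$ phrasing of (ii)$\Rightarrow$(i) is just the paper's estimate $\mathrm{dist}(Pf,[-\mathbf{1},\mathbf{1}])\le\|\e^{t(A-s(A))}f-Pf\|+\mathrm{dist}(\e^{t(A-s(A))}f,[-\mathbf{1},\mathbf{1}])$ in disguise). The $L^2$-versus-$L^\infty$ obstacle you single out is in fact left untreated in the paper as well — its proof works with an unspecified (Hilbert-space) distance to the closed order interval, and the intended applications are finite dimensional — so your observation that equivalence of norms settles those cases is apt, though be aware that your proposed finite-rank reduction would not by itself yield $L^\infty$-convergence of $\e^{t(A-s(A))}(I-P)f$ in a genuinely infinite-dimensional setting.
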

\begin{proof}
%We adapt the proof of \cite[Theorem~7.3.3]{Glu16}. 
%First of all, observe that because $E_{s(A)}$ is the fixed space of $(\e^{t(A-s(A))})_{t\ge 0}$, $Pf=\e^{t(A-s(A))}Pf$ for all $t\ge 0$, and furthermore 
It follows from the Spectral Theorem that $\lim_{t\to \infty}\e^{t(A-s(A))}f=Pf$ for all $f\in L^2(X,\mu)$.
%$(i)\Leftrightarrow(ii)$ By the Spectral Theorem, $(\e^{t(A-s(A))})_{t\ge 0}$ converge in operator norm to $P$.

$(i)\Rightarrow(ii)$ Let $f\in L^2(X,\mu)$ with $-\mathbf{1}\le f\le \mathbf{1}$, so that by assumption $-\mathbf{1}\le Pf\le \mathbf{1}$, too. Then, for all $t\ge 0$
\[
\mathrm{dist } (\e^{t(A-s(A))}f, [-\mathbf{1},\mathbf{1}])\le \|\e^{t(A-s(A))}f- Pf\|+\mathrm{dist}(Pf,[-\mathbf{1},\mathbf{1}])=\|\e^{t(A-s(A))}f- Pf\|\stackrel{t\to \infty}{\to}0.
\]
%where the limit is a consequence of the Spectral Theorem.

$(ii)\Rightarrow(i)$ 
%Let $r\ge 0$ and $f\in L^2(X,\mu)$ with $-\mathbf{1}\le f\le \mathbf{1}$. Because $E_{s(A)}$ is the fixed space of $(\e^{t(A-s(A))})_{t\ge 0}$, we see that
%\[
%\begin{split}
%\mathrm{dist } (\e^{r(A-s(A))}Pf, [-\mathbf{1},\mathbf{1}])&=\lim_{t\to\infty} \mathrm{dist } (\e^{(r+t)(A-s(A))}Pf, [-\mathbf{1},\mathbf{1}])\\
%&\le \lim_{t\to\infty} \mathrm{dist } (\e^{(r+t)(A-s(A))}f, [-\mathbf{1},\mathbf{1}])+ \lim_{t\to\infty} \|\e^{(r+t)(A-s(A))}(f-Pf)=0\|
%\end{split}
%\]
Let $f\in L^2(X,\mu)$ with $-\mathbf{1}\le f\le \mathbf{1}$. Then, for all $t\ge 0$
\[
\begin{split}
\mathrm{dist } (Pf, [-\mathbf{1},\mathbf{1}])
%&=\lim_{t\to\infty} \mathrm{dist } (\e^{t(A-s(A))}Pf, [-\mathbf{1},\mathbf{1}])\\
&\le \|\e^{t(A-s(A))}f- Pf\|+\mathrm{dist } (\e^{t(A-s(A))}f, [-\mathbf{1},\mathbf{1}])\stackrel{t\to \infty}{\to}0,
%\lim_{t\to\infty} \mathrm{dist } (\e^{t(A-s(A))}f, [-\mathbf{1},\mathbf{1}])+ \lim_{t\to\infty} \|\e^{t(A-s(A))}(f-Pf)\|=0
\end{split}
\]
as the second addend of the RHS converges to 0 by assumption.
\end{proof}

In the special case of finite dimensional Hilbert spaces, we obtain the following.

\begin{cor}\label{cor:eventlinftycontr-matr}
Under the assumptions of \autoref{prop:eventlinftycontr}, let $X$ be finite, i.e., $\dim L^2(X,\mu)=:N<\infty$.
%, and let $E_{s(A)}$ denote the eigenspace associated with $s(A)$.
Let $(\varphi_k)_{1\le k\le N}$ be an orthonormal basis of eigenvectors of $A$ with associated eigenvalues $(\lambda_k)_{1\le k\le N}$, and let $J_0:=\{k\in \{1,\ldots,N\}:\lambda_k=s(A)\}$.  
Then $(\e^{tA})_{t\ge 0}$ is asymptotically $\infty$-contractive if and only if
\begin{equation}\label{eq:infty-contr-nonsimple}
\max_{1 \le i \le N} \sum_{j=1}^N \left|\sum_{k\in J_0} (\varphi_k)_i (\varphi_k)_j\right| \le 1.
\end{equation}
\end{cor}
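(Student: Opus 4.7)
My plan is to derive the corollary as a direct consequence of \autoref{prop:eventlinftycontr} together with the spectral representation of the orthogonal projector $P$ in the finite dimensional setting.

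First I would invoke \autoref{prop:eventlinftycontr} to replace the semigroup condition by a condition on $P$ alone: the semigroup $(\e^{tA})_{t\ge 0}$ is asymptotically $\infty$-contractive if and only if $P$ is $\infty$-contractive as an operator on $L^2(X,\mu)$ endowed with the $\infty$-norm. This reduction is the crux, and it already uses everything the previous proposition gives us.

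Next I would make the projector explicit using the spectral decomposition. Because $(\varphi_k)_{1\le k\le N}$ is an orthonormal basis of eigenvectors and $J_0$ labels those associated with the spectral bound $s(A)$, we have
\[
P=\sum_{k\in J_0}\varphi_k\varphi_k^{*},
\]
whose $(i,j)$-entry in the canonical basis of $L^2(X,\mu)\cong \K^N$ is exactly $P_{ij}=\sum_{k\in J_0}(\varphi_k)_i\overline{(\varphi_k)_j}$. Under the convention of \autoref{exa:prototyp} and the surrounding material, the $\varphi_k$ may be chosen real (since $A$ is self-adjoint with real matrix representation in the cases of interest), so the complex conjugate drops and we recover the expression in the statement.

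Finally I would use the classical identity for the operator norm on $\ell^\infty$: for any $N\times N$ matrix $M$,
\[
\|M\|_{\infty\to\infty}=\max_{1\le i\le N}\sum_{j=1}^{N}|M_{ij}|.
\]
Applying this to $P$ gives $\|P\|_{\infty\to\infty}\le 1$ if and only if \eqref{eq:infty-contr-nonsimple} holds, completing the chain of equivalences.

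The argument is essentially book-keeping once \autoref{prop:eventlinftycontr} is in hand. The only mild subtlety I anticipate is justifying the reality of the eigenvectors (so the formula contains $(\varphi_k)_i(\varphi_k)_j$ without conjugation); this is immediate if $\K=\R$, and in the complex case one just notes that $P$ is still given by $\sum_{k\in J_0}(\varphi_k)_i\overline{(\varphi_k)_j}$ and that the absolute value inside the sum is unaffected by whether one writes the conjugate or not after a suitable choice of real basis for the $s(A)$-eigenspace of a real self-adjoint operator.
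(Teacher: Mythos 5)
Your argument is correct and coincides with the paper's own proof: both reduce the statement via \autoref{prop:eventlinftycontr} to the $\infty$-contractivity of the spectral projector $P=\sum_{k\in J_0}\varphi_k\varphi_k^\top$ and then apply the standard maximum-absolute-row-sum characterisation of the $\ell^\infty$ operator norm to obtain \eqref{eq:infty-contr-nonsimple}. Your extra remark on real versus complex eigenvectors is a harmless refinement that the paper leaves implicit.
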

In particular, whenever the eigenvalue $s(A)$ is simple, say $J_0:=\{1\}$, then \eqref{eq:infty-contr-nonsimple} simplifies to
\begin{equation}\label{eq:ellinftysi}
\|\varphi_1\|_1\|\varphi_1\|_\infty\le 1.
\end{equation}
\begin{proof}
The orthogonal projector $P$ onto $E_{s(A)}$ is $\varphi_1\varphi_1^\top+\ldots+\varphi_k\varphi_k^\top$: in view of the well-known criterion for $\infty$-contractivity of a matrix, see \cite[\S~5.6.5]{HorJoh90},  the $\infty$-contractivity of $P$ onto $E_{s(A)}$ is therefore equivalent to \eqref{eq:infty-contr-nonsimple}.
\end{proof}

\begin{lemma}\label{lem:hypersignl}
Let $\mH$ be a directed hypergraph. Let all vertices of $\mH$ be co-oriented in each hyperedge. Then $(\e^{-t\mathcal L})_{t\ge 0}$ is $\infty$-contractive if and only if each hyperedge contains at most two vertices.
\end{lemma}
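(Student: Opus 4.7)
The plan is to apply the diagonal dominance criterion of \autoref{lem:mug07} and simplify it drastically using the co-orientation hypothesis. First I would observe that, under the assumption, any two vertices belonging to a common hyperedge are co-oriented there, so $\#\mE^{\not\backslash\!\backslash}_{\mv\mw} = 0$ for all $\mv \ne \mw$. By \eqref{eq:josmul-basic}, this means $\mathcal L_{\mv\mw} = \#\mE^{\backslash\!\backslash}_{\mv\mw} \ge 0$ for $\mv \ne \mw$, so the absolute values appearing in the $\infty$-contractivity criterion \eqref{eq:diag-dom} can simply be dropped.

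Next I would invoke the same double-counting identity already used in the proof of \autoref{lem:diag-semidom}: summing $\#\mE^{\backslash\!\backslash}_{\mv\mw}$ over $\mw \ne \mv$ amounts, after swapping the order of summation, to counting for each $\me \in \mE_\mv$ the remaining $\deg(\me) - 1$ vertices of $\me$, whence
\[
\sum_{\mw \ne \mv}\#\mE^{\backslash\!\backslash}_{\mv\mw}=\sum_{\me \in \mE_\mv}(\deg(\me) - 1).
\]
Since $\deg(\mv) = \#\mE_\mv = \sum_{\me \in \mE_\mv} 1$, the criterion of \autoref{lem:mug07} rewrites, in the present setting, as the per-vertex condition
\[
\sum_{\me \in \mE_\mv}(\deg(\me) - 2) \le 0 \qquad \hbox{for every } \mv \in \mV.
\]

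The forward direction is then immediate: if $\deg(\me) \le 2$ for every $\me \in \mE$, all summands are nonpositive and the condition holds at each $\mv$. For the converse, I would argue by contrapositive: suppose some hyperedge $\me^*$ has $\deg(\me^*) \ge 3$ and pick any $\mv \in \me^*$; then $\me^*$ contributes the summand $\deg(\me^*) - 2 \ge 1$ to the sum at $\mv$, while every other hyperedge in $\mE_\mv$ (having $\deg(\me) \ge 2$ under the standard non-degeneracy convention) contributes a nonnegative summand, so the total is strictly positive and the reformulated criterion fails. The only delicate point -- and the one I expect to be the main obstacle if stated verbatim in full generality -- is exactly this tacit convention on hyperedge size: allowing singleton hyperedges (with $\deg(\me) = 1$) would introduce summands equal to $-1$ that could in principle compensate for larger hyperedges, so the statement implicitly rules these out.
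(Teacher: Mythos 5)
Your proof is correct and takes essentially the same route as the paper: both arguments reduce the claim to the diagonal-dominance criterion of \autoref{lem:mug07}, use the co-orientation hypothesis to remove all cancellations in the off-diagonal row sums, and then count contributions hyperedge by hyperedge, your version merely making the count $\sum_{\me\in\mE_\mv}(\deg(\me)-2)\le 0$ explicit. The degeneracy caveat you flag (singleton hyperedges could defeat the ``only if'' direction) is genuine, but the paper's own converse argument tacitly relies on the same convention, so it is not a gap in your proposal relative to the paper's proof.
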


Observe that the signless Laplacian of a graph is a special case of Laplacian of a directed hypergraph all of whose vertices are co-oriented.

\begin{proof}
If all vertices are co-oriented, then the Laplacian can be represented as
\[
\mathcal D+\mathcal A
\]
where $\mathcal D$ is the degree matrix and $\mathcal A$ is, by \eqref{eq:josmul-basic}, a positive matrix: in particular, $\mathcal L$ is positive, too.

If each hyperedge contains at most two vertices, then each off-diagonal entry of $\mathcal A$ is at most 1 and $\sum\limits_{\mw\ne\mv}\mathcal A_{\mv\mw}\le \deg(\mv)$.
Then $\infty$-contractivity of $(\e^{-t\mathcal L})$ is an immediate consequence of \autoref{lem:mug07}.

Let, on the contrary, $\mH$ contain at least one hyperedge $\me$ with more than two vertices, say $\mv,\mw,\mz$. Then the diagonal entry $\mathcal L_{\mv\mv}=\deg(\mv)$ counts the hyperedge $\me$ once, but on the same line there are at least two entries $\mathcal L_{\mv\mw},\mathcal L_{\mv\mz}\ge 1$:
this shows that~\eqref{eq:diag-dom} is not satisfied.
\end{proof}

%{\color{red}
%\begin{rem}
%{\color{red}check!}
%If $-\mathcal L$ is diagonally dominant, i.e., \eqref{eq:diag-dom} holds, then by Prop. 2 in \url{https://www.cs.tau.ac.il/~stoledo/Bib/Pubs/factorwidth2.pdf} $\mathcal L$ has factor width at most two, i.e., there exists a matrix $V$ such that $\mathcal L=VV^\top$ and such that each column of $V$ contains at most two non-zero entries.
%
%This is interesting because (Theorem 8 and 9 in \url{https://www.cs.tau.ac.il/~stoledo/Bib/Pubs/factorwidth2.pdf}) relates $-B$'s property of generalized diagonal domination (which is a bit stronger than
%\[
%\Re b_{ii} y_i \ge \sum_{j\ne i}|b_{ij}|	 y_j\qquad \hbox{for all }i
%\]
%for $y>0$, i.e., existence of an invariant measure for $\e^{-tB}$ to $-B$'s property of being an $H$ matrix, i.e., of 
%\[
%\left\{
%\begin{split}
%|b_{ij}|\qquad & i\ne j\\
%-|b_{ij}|\qquad & i=j
%\end{split}
%\right.
%\]
%being an $M$ matrix.
%\end{rem}	
%}

%\begin{exa}
%Both directed hypergraphs consisting of one hyperedge only support Laplacians that generate a non-$\infty$-contractive semigroup: this is immediate applying \cite[Lemma~6.1]{Mug07} to the formulae in~\autoref{exa:prototyp}.
%
%Adding to the prototypical $\mH$ the hyperedge $\me_2=\vv{\{\mv_2\},\{\mv_3\}}$ does not change the picture:
%\[
%\mathcal I= \begin{pmatrix}
%-1 & 0\\
%1 & -1\\
%1 & 1
%\end{pmatrix}\quad\hbox{whence}\quad 
%\mathcal L=
%\begin{pmatrix}
%1 & -1 & -1\\
%-1 & 2 & 0\\
%-1 & 0 & 2
%\end{pmatrix}
%\]
%\end{exa}

\begin{prop}\label{prop:greiner}
Let $\mH$ be an equipotent directed hypergraph.
Then for the semigroup $(\e^{-t\mathcal L})_{t\ge 0}$ positivity (resp., eventual positivity) is equivalent to $\infty$-contractivity (resp., eventual $\infty$-contractivity).
\end{prop}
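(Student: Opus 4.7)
The plan rests on the characterisation in \autoref{lem:a10charac}: equipotency is equivalent to $\mathcal L\mathbf{1}=0$, which in turn means that $\e^{-t\mathcal L}\mathbf{1}=\mathbf{1}$ for every $t\ge 0$. In other words, the heat semigroup is automatically stochastic, hence row-normalised in the usual matrix sense. I would prove both equivalences in one stroke, treating the non-eventual case as the threshold $t_0=0$ and the eventual one as an arbitrary $t_0>0$.

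For the implication \emph{(eventually) positive $\Rightarrow$ (eventually) $\infty$-contractive}, I would use the standard sub-Markov trick: for $-\mathbf{1}\le f\le\mathbf{1}$ one has $\mathbf{1}\pm f\ge 0$; applying $\e^{-t\mathcal L}$, which preserves positivity for $t\ge t_0$, and invoking $\e^{-t\mathcal L}\mathbf{1}=\mathbf{1}$, one gets $\mathbf{1}\pm \e^{-t\mathcal L}f\ge 0$, i.e.\ $|\e^{-t\mathcal L}f|\le\mathbf{1}$.

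For the converse direction, I would argue entrywise. Denote by $m_{\mv\mw}(t)$ the entries of $\e^{-t\mathcal L}$. Eventual $\infty$-contractivity bounds the $\ell^1$-norm of each row by $1$, that is $\sum_\mw |m_{\mv\mw}(t)|\le 1$ for $t\ge t_0$, while $\e^{-t\mathcal L}\mathbf{1}=\mathbf{1}$ forces $\sum_\mw m_{\mv\mw}(t)=1$. Subtracting,
\[
\sum_\mw\bigl(|m_{\mv\mw}(t)|-m_{\mv\mw}(t)\bigr)\le 0,
\]
but the summands are all non-negative, so each must vanish. Hence $m_{\mv\mw}(t)\ge 0$ for every $\mv,\mw$ and every $t\ge t_0$, yielding (eventual) positivity of $(\e^{-t\mathcal L})_{t\ge 0}$.

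I do not anticipate substantial obstacles: the proof is essentially a reduction to the elementary fact that a row-stochastic vector whose absolute entries sum to at most one must already be non-negative. The only thing to keep in mind is that the threshold $t_0$ is preserved in both directions of the argument, so the eventual equivalence follows by exactly the same reasoning as the non-eventual one.
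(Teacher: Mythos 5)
Your proposal is correct, and it proves directly what the paper outsources to a citation. The paper's own proof is a one-liner: it observes (as you do, via \autoref{lem:a10charac}) that equipotency forces $\mathcal L\mathbf{1}=0$, and then invokes \cite[Lemma B-III-2.1]{Nag86}, which encapsulates precisely the interplay between positivity, $\infty$-contractivity and the invariance of $\mathbf{1}$ in the general Banach-lattice setting. You instead make the argument self-contained: the forward direction is the classical sub-Markov trick ($\mathbf{1}\pm f\ge 0$ plus $\e^{-t\mathcal L}\mathbf{1}=\mathbf{1}$), and the converse is the elementary entrywise observation that a matrix whose signed row sums equal $1$ and whose absolute row sums are at most $1$ must have nonnegative entries, using that $\infty$-contractivity is exactly the bound $\max_\mv\sum_\mw|m_{\mv\mw}(t)|\le 1$. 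Both directions visibly preserve the threshold $t_0$, so the eventual statement comes for free, as you note. What the citation buys is generality (it covers $C(K)$ and infinite-dimensional lattices, and only needs $A\mathbf{1}\le 0$ for the forward implication, as the paper itself uses in \autoref{rem:inftyposobv}); what your argument buys is transparency in the finite-dimensional setting, in particular making the converse implication — which is the less standard half of the equivalence — completely explicit. No gaps.
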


\begin{proof}
Because $\mathcal L{\mathbf{1}}=0$, the assertion follows from~\cite[Lemma B-III-2.1]{Nag86}.
 \end{proof}

Recall that a positive semigroup $(T(t))_{t\ge 0}$ on $L^1(X,\mu)$ is said to be \textit{stochastic} if
$\|T(t)f\|_1=\|f\|_1$ for all $t\ge 0$ and all $0\le f\in L^1(X,\mu)$.
Combining \autoref{prop:posit}, \autoref{lem:a10charac} and \autoref{prop:greiner} we immediately obtain the following.

\begin{cor}\label{cor:stochequi}
Let $\mH$ be a directed hypergraph. Then $(\e^{-t\mathcal L})_{t\ge 0}$ is a stochastic semigroup if and only if $\mH$ is equipotent and additionally
\[
\#\mE^{\backslash\!\backslash}_{\mv\mw}\le \#\mE^{\not\backslash\!\backslash}_{\mv\mw}\qquad \hbox{for all }\mv,\mw\in \mV.
\]
%and
%\[
%\#\metarg =\#\mesour\qquad \hbox{for all }\me\in\mE,
%\]
\end{cor}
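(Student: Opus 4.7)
The plan is to reduce stochasticity of $(\e^{-t\mathcal L})_{t\ge 0}$ to the conjunction of two properties -- positivity and conservation of $\mathbf{1}$ -- and then translate each of them combinatorially by invoking \autoref{prop:posit}, \autoref{lem:a10charac} and \autoref{prop:greiner}.

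First I would note that, since $\mathcal L$ is real and symmetric, the $L^1$-semigroup $(\e^{-t\mathcal L})_{t\ge 0}$ coincides with the dual of the $L^\infty$-semigroup $(\e^{-t\mathcal L})_{t\ge 0}$; therefore stochasticity is equivalent to the semigroup being Markovian, i.e., positive and satisfying $\e^{-t\mathcal L}\mathbf{1}=\mathbf{1}$ for all $t\ge 0$. Differentiating at $t=0$, the condition $\e^{-t\mathcal L}\mathbf{1}=\mathbf{1}$ is in turn equivalent to $\mathcal L\mathbf{1}=0$.

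For the implication ``$\Leftarrow$'', assume $\mH$ is equipotent and that \eqref{eq:M-matr-hypercond} holds. Then \autoref{prop:posit} yields positivity of $(\e^{-t\mathcal L})_{t\ge 0}$, while the equivalence (\ref{item:equipot-equipot})$\Leftrightarrow$(\ref{item:nullpace-equipot}) in \autoref{lem:a10charac} yields $\mathcal L\mathbf{1}=0$; by the preceding paragraph these two facts together imply stochasticity. (Note that \autoref{prop:greiner} automatically upgrades the positive semigroup to an $\infty$-contractive -- and hence genuinely Markovian -- one, which reconciles the two perspectives on stochasticity.)

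For the implication ``$\Rightarrow$'', assume the semigroup is stochastic. Then it is in particular positive, so \autoref{prop:posit} gives \eqref{eq:M-matr-hypercond}. Moreover stochasticity gives $\e^{-t\mathcal L}\mathbf{1}=\mathbf{1}$, hence $\mathcal L\mathbf{1}=0$, and applying the equivalence (\ref{item:nullpace-equipot})$\Leftrightarrow$(\ref{item:equipot-equipot}) in \autoref{lem:a10charac} in the other direction yields equipotence of $\mH$. Combining the two directions completes the proof. There is no real obstacle here: the only subtlety is the initial reduction from stochasticity on $L^1$ to the combined condition ``positivity $+$ $\mathcal L\mathbf{1}=0$'' on the self-adjoint semigroup, which is what makes all three earlier results applicable simultaneously.
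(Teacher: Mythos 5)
Your proof is correct and follows essentially the same route as the paper, which simply combines \autoref{prop:posit} (positivity $\Leftrightarrow$ the combinatorial condition) with \autoref{lem:a10charac} (equipotence $\Leftrightarrow$ $\mathcal L\mathbf{1}=0$), after the same reduction of stochasticity (via self-adjointness) to positivity plus $\mathcal L\mathbf{1}=0$. Your observation that \autoref{prop:greiner} only serves to reconcile the Markovian/stochastic viewpoints matches the role it plays in the paper's one-line proof.
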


\subsection{Sub-hypergraphs and boundary conditions}

The theory of subgraphs with boundary conditions is classical in graph theory, at least since \cite{Fri93,Chu97}.
In this section we  elaborate on the natural extensions of these ideas to the case of hypergraphs. Comparable ideas leading to sub-hypergraphs have been discussed in~\cite[Section~4]{ShaTiaZha25} and~\cite[Section~4]{KurMulNab25}, and properties of diffusion equations on sub-hypergraphs have been discussed in~\cite{FukIkeUch24}.

\begin{defi}
Given a directed hypergraph $\mH=(\mV,\mE)$, let $\mV'\subset \mV$ and denote $\mV'_0:=\mV'\setminus \mV$: we refer to the operator $\mathcal L^{\mV',\mathrm{D}}_\mH:= P_{\mV'} \mathcal L_\mG P_{\mV'}$ as the \textit{Laplacian on $\mH$ with Dirichlet conditions on $\mV'_0$}. 
\end{defi}
%\begin{rem}
%Consider the vertex boundary $\delta \mV'$ consisting of those $\mv\in \mV\setminus \mV'$ that are adjacent to at least an element of $\mv'\in \mV'$, i.e., such that there exists at least a hyperedge $\me\in \mE$ such that $\iota_{\mv\me}\ne 0\ne \iota_{\mv'\me}$. Then 
Clearly, $\mathcal L^{\mV',\mathrm{D}}_\mH$ agrees with the part of $\mathcal L_\mH$ in the closed subspace $\{f\in \C^\mV:f(\mv)=0\hbox{ for all }\mv\in \mV'_0\}$.
%\end{rem}

\begin{exa}\label{exa:dirichlet-markovian?}
Imposing Dirichlet conditions on a subset of the vertex set may change the behaviour of the heat flow. 
\begin{enumerate}[(1)]
\item\label{item:dirichmark1} Consider, for example, the directed hypergraph $\mH$ with vertex set $\mV=\{\mv_1,\mv_2,\mv_3,\mv_4\}$ consisting of only one hyperedge, see Figure~\ref{fig:hyper-4-balanced}.

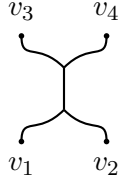
\begin{figure}[ht]
\begin{tikzpicture}[scale=.7, thick,
 arrowmid/.style={
 postaction={decorate},
 decoration={
 markings,
% mark=at position 0.5 with {\arrow[scale=1.5]{Latex}}
 }
 }
]

 % Vertex coordinates
 \coordinate (v1) at (-0.8,0);
 \coordinate (v2) at (0.8,0);
 \coordinate (merge) at (0,0.6);
 \coordinate (split) at (0,1.4);
 \coordinate (v3) at (-0.8,2);
 \coordinate (v4) at (0.8,2);

 % Incoming arcs to merge point
 \draw[thick]
 (v1) .. controls +(0,0.5) and +(-0.5,-0.5) .. (merge);
 \draw[thick]
 (v2) .. controls +(0,0.5) and +(0.5,-0.5) .. (merge);

 % Central edge with arrow in the middle
 \draw[thick, arrowmid]
 (merge) -- (split);

 % Outgoing arcs from split point
 \draw[thick]
 (split) .. controls +(-0.5,0.5) and +(0,-0.5) .. (v3);
 \draw[thick]
 (split) .. controls +(0.5,0.5) and +(0,-0.5) .. (v4);

 % Vertex dots with labels
 \fill (v1) circle (1.5pt) node[below=2pt] {$v_1$};
 \fill (v2) circle (1.5pt) node[below=2pt] {$v_2$};
 \fill (v3) circle (1.5pt) node[above=2pt] {$v_3$};
 \fill (v4) circle (1.5pt) node[above=2pt] {$v_4$};

\end{tikzpicture}\caption{The directed hypergraph consisting of one hyperedge and four vertices discussed in~\autoref{exa:dirichlet-markovian?}}
\label{fig:hyper-4-balanced}
\end{figure}

We  consider three cases:
\begin{itemize}
\item $\mV'=\{\mv_1,\mv_2,\mv_3,\mv_4\}$, whence $\mV_0=\emptyset$;
\item $\mV'=\{\mv_1,\mv_2,\mv_3\}$, whence $\mV_0=\{\mv_4\}$;
\item $\mV'=\{\mv_2,\mv_3\}$, whence $\mV_0=\{\mv_1,\mv_4\}$.
\end{itemize}
A direct computation shows that the corresponding Laplacians $\mathcal L^{\mV',\mathrm{D}}_\mG$ is, in these cases,
\[
\begin{pmatrix}
1 & 1 & -1 & -1\\
1 & 1 & -1 & -1\\
-1 & -1 & 1 & 1\\
-1 & -1 & 1 & 1\\
\end{pmatrix},\quad 
\begin{pmatrix}
1 & 1 & -1 & 0\\
1 & 1 & -1 & 0\\
-1 & -1 & 1 & 0\\
0 & 0 & 0 & 0\\
\end{pmatrix},\quad 
\begin{pmatrix}
0 & 0 & 0 & 0\\
0 & 1 & -1 & 0\\
0 & -1 & 1 & 0\\
0 & 0 & 0 & 0\\
\end{pmatrix}.
\]
This shows that imposing Dirichlet condition may ``restore'' the positivity properties that are typical of graph Laplacians: indeed, the semigroup $(-\e^{-t\mathcal L^{\mV',\mathrm{D}}}_\mH)_{t\ge 0}$ is Markovian and (whenever restricted to the relevant space of functions supported on $\{\mv_2,\mv_3\}$) irreducible in the third case, but not in the second case -- in fact, not even eventually irreducible, as 0 is a non-simple eigenvalue even after restricting $\mathcal L^{\mV',\mathrm{D}}$ to the relevant space of functions supported on $\{\mv_1,\mv_2,\mv_3\}$ (indeed, it has multiplicity 2).

\item If the directed hypergraph $\mH$ with vertex set $\mV=\{\mv_1,\mv_2,\mv_3,\mv_4,\mv_5\}$ in \autoref{fig:hyper-4-balanced-additional-2} is considered, instead, then $(\e^{-t\mathcal L^{\mV',\mathrm{D}}}_\mH)_{t\ge 0}$ is Markovian in both cases
\begin{itemize}
\item $\mV'=\{\mv_2,\mv_3,\mv_5\}$, whence $\mV_0=\{\mv_1,\mv_4\}$;
\item $\mV'=\{\mv_2,\mv_3\}$, whence $\mV_0=\{\mv_1,\mv_4,\mv_5\}$.
\end{itemize}
\begin{figure}[ht]
\begin{tikzpicture}[scale=.7, thick,
 arrowmid/.style={
 postaction={decorate},
 decoration={
 markings,
% mark=at position 0.5 with {\arrow[scale=1.5]{Latex}}
 }
 }
]

 % Vertex coordinates
 \coordinate (v1) at (-0.8,0);
 \coordinate (v2) at (0.8,0);
 \coordinate (merge) at (0,0.6);
 \coordinate (split) at (0,1.4);
 \coordinate (v3) at (-0.8,2);
 \coordinate (v4) at (0.8,2);
 \coordinate (v5) at (-1.8,2);

 % Incoming arcs to merge point
 \draw[thick]
 (v1) .. controls +(0,0.5) and +(-0.5,-0.5) .. (merge);
 \draw[thick]
 (v2) .. controls +(0,0.5) and +(0.5,-0.5) .. (merge);

 % Central edge with arrow in the middle
 \draw[thick, arrowmid]
 (merge) -- (split);

 % Outgoing arcs from split point
 \draw[thick]
 (split) .. controls +(-0.5,0.5) and +(0,-0.5) .. (v3);
 \draw[thick]
 (split) .. controls +(0.5,0.5) and +(0,-0.5) .. (v4);

 \draw[thick](v3) -- (v5);

 % Vertex dots with labels
 \fill (v1) circle (1.5pt) node[below=2pt] {$v_1$};
 \fill (v2) circle (1.5pt) node[below=2pt] {$v_2$};
 \fill (v3) circle (1.5pt) node[above=2pt] {$v_3$};
 \fill (v4) circle (1.5pt) node[above=2pt] {$v_4$};
 \fill (v5) circle (1.5pt) node[above=2pt] {$v_5$};

\end{tikzpicture}\caption{The second directed hypergraph considered in~\autoref{exa:dirichlet-markovian?}: it consists of two hyperedges -- in fact, one hyperedge and one edge.}
\label{fig:hyper-4-balanced-additional-2}
\end{figure}
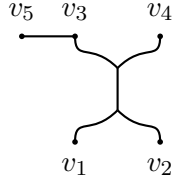
In these cases, the corresponding Laplacians $\mathcal L^{\mV',\mathrm{D}}_\mH$ are
\[
\begin{pmatrix}
0 & 0 & 0 & 0 & 0\\
0 & 1 & -1 & 0 & 0\\
0 & -1 & 2 & 0 & -1\\
0 & 0 & 0 & 0 & 0\\
0 & 0 & -1 & 0 & 1\\
\end{pmatrix},\qquad
\begin{pmatrix}
0 & 0 & 0 & 0 & 0\\
0 & 1 & -1 & 0 & 0\\
0 & -1 & 2 & 0 & 0\\
0 & 0 & 0 & 0 & 0\\
0 & 0 & 0 & 0 & 0\\
\end{pmatrix},
\]
respectively.
\end{enumerate}
\end{exa}

Before discussing the qualitative properties of the semigroup generated by a hypergraph Laplacian with Dirichlet conditions we need to introduce the following notions, which are inspired by analogous notions that for graphs.

\begin{defi}
Let $\mH=(\mV,\mE)$ be a hypergraph and let $\mV'\subset \mV$. 
The \emph{sub-hypergraph induced by} $\mV'$ is the directed hypergraph with vertex set $\mV'$ and edge set $\mE':=\{\me=(\mesour,\metarg)\in \mE:\mesour,\metarg \in \mathcal P(\mV')\}$.

The \emph{ Dirichlet sub-hypergraph} (short: D-sub-hypergraph) \emph{induced by} $\mV'$ is the directed hypergraph with vertex set $\mV'$ and edge set $\mE':=\{\me'=(\mesour\cap \mV',\metarg\cap \mV'):\me=(\mesour,\metarg) \in \mE\}$.
\end{defi}

We stress that given a directed hypergraph $\mH=(\mV,\mE)$ and a vertex subset $\mV'$, neither is the sub-hypergraph induced by $\mV'$ a sub-hypergraph of the D-sub-hypergraph induced by $\mV'$, nor vice versa. 

Observe that the the space of functions supported on the vertices of the D-sub-hypergraph induced by $\mV'$ is not only a subspace, but even an ideal of the space $\ell^2(\mV)$. In this way, it is possible to compare heat flows on different hypergraphs in a canonical way.

\begin{prop}
Let $\mH=(\mV,\mE)$ be a directed hypergraph and let $\mV'\subset \mV$. The semigroup $(\e^{-t\mathcal L^{\mV',\mathrm{D}}_\mH})_{t\ge 0}$ is positive if and only if the D-sub-hypergraph induced by $\mV'$ satisfies \eqref{eq:M-matr-hypercond}.

Let $\mV''$ be a further subset of $ \mV$ such that the D-sub-hypergraph induced by $\mV''$ satisfies \eqref{eq:M-matr-hypercond}. Then the semigroup $(\e^{-t\mathcal L^{\mV',\mathrm{D}}_\mH})_{t\ge 0}$ dominates the semigroup $(\e^{-t\mathcal L^{\mV'',\mathrm{D}}_\mH})_{t\ge 0}$ if and only if $\mV''\subset \mV'$.
\end{prop}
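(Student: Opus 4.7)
The plan is to first identify $\mathcal L^{\mV',\mathrm{D}}_\mH$, restricted to its invariant subspace $\ell^2(\mV')$, with the Laplacian of the D-sub-hypergraph induced by $\mV'$. For $\mv,\mw\in \mV'$ both the degree $\deg(\mv)$ and the co-/anti-orientation counts $\#\mE^{\backslash\!\backslash}_{\mv\mw}$, $\#\mE^{\not\backslash\!\backslash}_{\mv\mw}$ are unchanged when each hyperedge $\me$ is replaced by its restriction $\me'=(\mesour\cap \mV', \metarg\cap \mV')$, because $\mv,\mw$ already lie in $\mV'$. By \eqref{eq:josmul-basic}, this shows that the $(\mv,\mw)$-entry of $\mathcal L_\mH$ coincides with the corresponding entry of the Laplacian of the D-sub-hypergraph induced by $\mV'$, for all $\mv,\mw\in \mV'$. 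The first assertion is then immediate from \autoref{prop:posit} applied to that D-sub-hypergraph.

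For the second assertion I would pass to a form-theoretic framework and invoke Ouhabaz's domination theorem, see e.g.~\cite{Ouh05}. Denote by $a$ (resp.\ $b$) the symmetric positive form $(u,v)\mapsto \langle\mathcal L_\mH u,v\rangle$ with form domain $V_A:=\ell^2(\mV')$ (resp.\ $V_B:=\ell^2(\mV'')$), each seen as a subspace of $\ell^2(\mV)$; the associated self-adjoint operators are $\mathcal L^{\mV',\mathrm{D}}_\mH$ and $\mathcal L^{\mV'',\mathrm{D}}_\mH$. I regard both semigroups on the common Hilbert space $\ell^2(\mV)$ via the standard extension by zero on the orthogonal complement of the form domain. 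Both are positive by hypothesis, the positivity condition on $\mV'$ being tacitly understood since domination is a relation between positive semigroups.

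For the sufficient direction ($\mV''\subset\mV'$), $V_B\subset V_A$ is a lattice ideal and the two forms coincide on $V_B\times V_B$, so Ouhabaz's criterion yields the domination at once. For the necessary direction, I would use that the same criterion makes the inclusion $V_B\subset V_A$ necessary for domination: if $\mv_0\in \mV''\setminus \mV'$ existed, then $e_{\mv_0}\in V_B\setminus V_A$ would obstruct this inclusion. Concretely, $\e^{-t\mathcal L^{\mV',\mathrm{D}}_\mH}e_{\mv_0}=0$ for all $t\ge 0$ while $\e^{-t\mathcal L^{\mV'',\mathrm{D}}_\mH}e_{\mv_0}\to e_{\mv_0}\ne 0$ as $t\to 0^+$, contradicting domination. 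The main obstacle is settling on the right convention for ``domination'' when the two semigroups live on different form domains; the degenerate extension by zero together with Ouhabaz's ideal criterion resolves this cleanly and delivers both implications at once.
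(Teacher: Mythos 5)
Your argument is correct in substance and follows essentially the same route as the paper: the first assertion is reduced, via the entrywise identification of the $\mV'\times\mV'$ block of $\mathcal L^{\mV',\mathrm{D}}_\mH$ with the Laplacian of the D-sub-hypergraph (which your degree/orientation-count argument establishes correctly), to \autoref{prop:posit}; and the second assertion to Ouhabaz's ideal--domination criterion, which is exactly what the paper cites (\cite[Corollary~2.22]{Ouh05}). You even supply the necessity direction, which the paper leaves implicit, and you rightly observe that positivity of the dominating semigroup (i.e., condition \eqref{eq:M-matr-hypercond} for the D-sub-hypergraph induced by $\mV'$) has to be understood tacitly for the ``if'' direction.

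One caveat concerns your concrete contradiction in the necessity step. With the paper's literal definition $\mathcal L^{\mV',\mathrm{D}}_\mH=P_{\mV'}\mathcal L_\mH P_{\mV'}$ as a matrix on $\K^\mV$, the generator vanishes on the coordinates outside $\mV'$, so the matrix exponential acts there as the \emph{identity}: for $\mv_0\notin\mV'$ one has $\e^{-t\mathcal L^{\mV',\mathrm{D}}_\mH}e_{\mv_0}=e_{\mv_0}$, not $0$. Your displayed identity is valid only under the extension-by-zero (degenerate semigroup) convention you announce. That convention is in fact the right reading of the proposition: under the identity-off-$\mV'$ reading the ``only if'' direction can even fail in degenerate situations -- e.g.\ for the hypergraph of \autoref{exa:prototyp}.(\ref{item:hyperrotat}) with $\#\mV=4$, where $\mathcal L_\mH$ and all its Dirichlet truncations are diagonal, both exponentials are diagonal contractions extended by the identity, and domination holds for any pair $\mV',\mV''$. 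So your proof is internally consistent, but you should state explicitly that the semigroups are viewed on (or extended by zero from) the ideals $\ell^2(\mV')$ and $\ell^2(\mV'')$, since otherwise the step $\e^{-t\mathcal L^{\mV',\mathrm{D}}_\mH}e_{\mv_0}=0$ does not go through.
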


\begin{proof}
The first assertion is an immediate consequence of \autoref{prop:posit}.
The second assertion follows from \cite[Corollary~2.22]{Ouh05}.
\end{proof}

\begin{rem}
%Because $\mathcal L^{\mV',\mathrm{D}}_\mH$ is positive semi-definite, it can only possible vanish on constant functions on $\mV$: we stress that, U
Unlike in the case of graphs $\mG$, where the bottom of the spectrum of the part of $\mathcal L^{\mV',\mathrm{D}}_\mG$ in $\ell^2(\mV')$ is strictly positive, in the case of directed hypergraphs the bottom of the spectrum of the Laplacian on a D-sub-hypergraph in $\ell^2(\mV')$ can be 0, as shown by the third set $\mV'$ in \autoref{exa:dirichlet-markovian?}.(\ref{item:dirichmark1}).
\end{rem}

\subsection{Examples}

%Two examples of hypergraphs whose associated heat flow are or are \textit{not} $\infty$-contractive can be found in

\begin{enumerate}[(1)]
\item\label{item:posit-1hyper} Consider the hypergraphs in \autoref{exa:prototyp}.(\ref{item:onehyperedge}): by~ \autoref{prop:posit}, $(\e^{-t\mathcal L})_{t\ge 0}$ is positive if and only if $d_+=d_-=1$ (corresponding to the Laplacian on a directed graph on two vertices and one edge); in this case it is also irreducible, since so is the matrix $\mathcal L$; and, by~\autoref{cor:stochequi}, stochastic, since $\mH$ is equipotent. Furthermore, $d_+=d_-=1$ are also necessary conditions for the lowest eigenvalue to be simple and for the corresponding eigenspace to be spanned by a strictly positive vector, hence for eventual irreducibility. Even mere asymptotic positivity requires $d_+=d_-=1$, since otherwise the orthogonal projector onto $\ker\mathcal L$ would not be a positive matrix. Also, let $\#\mV\ge 3$ and observe that $(\e^{t\mathcal L})_{t\ge 0}$ for $d_-=0$ and $d_+=\#\mV$ (or, equivalently, $d_-=\#\mV$ and $d_+=0$) is positive: indeed, it is the modulus semigroup of all semigroups generated by $-\mathcal L$ for $d_-\ge 1$ and $d_+\ge 1$.

Finally,  $(\e^{-t\mathcal L})_{t\ge 0}$ is $\infty$-contractive if and only if $d_- +d_+=2$  (corresponding to either the Laplacian or the signless Laplacian on a directed graph on two vertices and one edge). 

Let now $d_- + d_+\ge 3$. Based on \eqref{eq:1vector-proj}, the norm of the orthogonal projector onto the null space of $\mathcal L$ as an operator on $\ell^\infty(\mV)$ has norm $2\frac{\#\mV-1}{\#\mV}>1$. By \autoref{prop:eventlinftycontr} we conclude that $(\e^{-t\mathcal L})_{t\ge 0}$ is not even asymptotically $\infty$-contractive.

In fact, it can be checked that the semigroup $(\e^{-t\mathcal L})_{t\ge 0}$ is given by
\[
\e^{-t\mathcal L}=
\Id_{\#\mV}+\frac{1}{\#\mV}(\e^{-t\#\mV}-1)\cdot\begin{pmatrix}
\begin{tabular}{c|c}
$J_{d_-, d_-}$ & $- J_{d_-, d_+}$ \\ 
\hline 
$-J_{d_+, d_-}$ & $ J_{d_+, d_+}$ \\ 
\end{tabular} 
\end{pmatrix},\qquad t\ge 0,
\]
whose norm as an operator on $\ell^\infty(\mV)$ is, independently of $d_-,d_+$ but only of $d_-+d_+=\#\mV$,
%Accordingly, in each row the sum of the absolute value of all entries is
\[
\begin{split}
\frac{1}{\#\mV}\left|\e^{-t\#\mV}+(\#\mV-1)\right|+\frac{\#\mV-1}{\#\mV}\left|\e^{-t\#\mV}-1\right|
%&=\frac{1}{\#\mV}\left(\e^{-t\#\mV}+(\#\mV-1)+(\#\mV-1)(1-\e^{-t\#\mV}\right)\\
&=
 2\frac{\#\mV-1}{\#\mV}-\frac{\#\mV-2}{\#\mV}\e^{-t\#\mV}.
%\stackrel{t\to\infty}{\rightarrow}2\frac{\#\mV-1}{\#\mV},\qquad t\ge 0.
\end{split}
\]
%so $(\e^{-t\mathcal L})_{t\ge 0}$ is not even eventually $\infty$-contractive

% However, it can be checked by Maple 2024 that $(\e^{-t\mathcal L})_{t\ge 0}$ is not asymptotically $\infty$-contractive in the next smallest equipotent and non-equipotent cases, corresponding to $(d_-,d_+)\in\{(3,0),(2,1),(1,2),(0,3),(4,0),(3,1),(2,2),(1,3),(0,4)\}$.
% If $d_+ +d_-\ge 3$, then 0 is an eigenvalue with multiplicity 2, so $(\e^{-t\mathcal L_\mH})_{t\ge 0}$ cannot be eventually irreducible; and not even asymptotically positive, since $P_{\ker\mathcal L}$ is then not positive, either.}

\item The Laplacian associated with the hypergraphs in \autoref{exa:prototyp}.(\ref{item:hypersignless}) generates a semigroup that, by~\eqref{lem:glu}, is not even asymptotically positive for any $\#\mV\ge 2$, since the orthogonal projector onto $\ker \mathcal L$ is not a positive matrix. In view of \autoref{lem:mug07}, the semigroup is $\infty$-contractive if and only if $\#\mV=2$. However, observe that  the backward heat flow -- i.e., $(\e^{t\mathcal L})_{t\ge 0}$ -- is positive and irreducible (but neither $\infty$-contractive nor stochastic) for any $\#\mV\ge 2$.

\item The hypergraphs in \autoref{exa:prototyp}.(\ref{item:hyperrotat}) have a richer theory. To begin with, observe that $\mathcal L$ is a Z-matrix -- equivalently, $(\e^{-t\mathcal L})_{t\ge 0}$ is positive -- if and only $\#\mV\le 4$. However, $\mathcal L$ is only irreducible in the cases of $\#\mV=2$ or $\#\mV=3$, since for $\#\mV=4$ the matrix $\mathcal L$ is diagonal; and stochastic if and only if $\#\mV=2$. Also, for $\#\mV\ge 4$ the lowest eigenvalue 4 is not simple, hence by~\autoref{lem:glu} $(\e^{-t\mathcal L})_{t\ge 0}$ is not eventually irreducible; and not even asymptotically positive for $\#\mV\ge 5$, since the orthogonal projector onto the eigenspace associated with the lowest eigenvalue 4 is not a positive matrix. By \autoref{lem:mug07}, $(\e^{-t\mathcal L})_{t\ge 0}$ is $\infty$-contractive if and only if $\#\mV\le 5$.
In particular, $(\e^{-t\mathcal L})_{t\ge 0}$ is sub-Markovian if and only if $\#\mV=2,3,4$; and stochastic if and only if $\#\mV=2$.
%\item Consider the hypergraphs in \autoref{exa:prototyp2}: by \autoref{prop:posit}, $(\e^{-t\mathcal L_1})_{t\ge 0}$ is  positive and irreducible, since so is the matrix $\mathcal L_1$. On the other hand, by \autoref{prop:posit} and \autoref{lem:glu} $(\e^{-t\mathcal L_2})_{t\ge 0}$ is neither  positive nor eventually irreducible, since the lowest eigenvalue of $\mathcal L_2$ has multiplicity 2.
%Both semigroups are $\infty$-contractive,  by \autoref{lem:mug07}. In particular, $(\e^{-t\mathcal L_1})_{t\ge 0}$ is sub-Markovian (but not stochastic, since $\mathcal L\mathbf{1}\ne 0$).

\item\label{item:plotgraphyhper} Likewise, let $\mH$ be the directed hypergraph with incidence matrix
\[
\mathcal I_\mH:=\begin{pmatrix}
1 & -1 & 0\\
1 & 1 & -1\\
1 & 0 & 1
\end{pmatrix},\qquad\hbox{whence}\qquad 
\mathcal L_\mH:=\begin{pmatrix}
2 & 0 & 1\\
0 & 3 & 0\\
1 & 0 & 2
\end{pmatrix},
\]
which, in view of \autoref{exa:prototyp}, is a special case of the setting considered in \autoref{prop:expstable-ex1}: more precisely, $\mH$ is the union of a path graph $\mG$ on three vertices, with
\[
\mathcal I_\mG:=\begin{pmatrix}
 -1 & 0\\
 1 & -1\\
 0 & 1
\end{pmatrix},\qquad\hbox{whence}\qquad 
\mathcal L_\mG:=\begin{pmatrix}
1 & -1 & 0\\
-1 & 2 & -1\\
0 & -1 & 1
\end{pmatrix},
\]
 and a single hyperedge.
Then $(\e^{-t\mathcal L_\mH})_{t\ge 0}$ not even asymptotically positive, since the lowest eigenvalue 1 is simple but the corresponding eigenspace is spanned by the sign-changing vector $\varphi_1=\frac{1}{\sqrt{2}}\begin{pmatrix}
-1 & 0 & 1
\end{pmatrix}^\top$.
This semigroup is $\infty$-contractive by \autoref{lem:mug07}: this shows that the assumption that all vertices of $\mH$ are co-oriented cannot be dropped in \autoref{lem:hypersignl}.
Also, we know from~\autoref{prop:arorgluunion} that $(\e^{-t\mathcal L_\mH})_{t\ge 0}$ is eventually dominated by $(\e^{-t\mathcal L_\mG})_{t\ge 0}$: we have
\[
\e^{-t\mathcal L_\mH}\begin{pmatrix}
1 \\ 0 \\ 0
\end{pmatrix}=\frac12 \begin{pmatrix}
\e^{-3t}+\e^{-t}\\ 0 \\ \e^{-3t}-\e^{-t}
\end{pmatrix}\qquad\hbox{and}\qquad
\e^{-t\mathcal L_\mG}\begin{pmatrix}
1 \\ 0 \\ 0
\end{pmatrix}=\frac{1}{6} \begin{pmatrix}
3\e^{-t}+\e^{-3t}+2\\ -2\e^{-3t}+2\\ \e^{-3t}-3\e^{-t}+2
\end{pmatrix},\qquad t\ge 0,
\]
as well as
\[
\e^{-t\mathcal L_\mH}\begin{pmatrix}
0 \\1 \\ 0 
\end{pmatrix}=\frac12 \begin{pmatrix}
 0 \\ \e^{-3t} \\ 0
\end{pmatrix}\qquad\hbox{and}\qquad
\e^{-t\mathcal L_\mG}\begin{pmatrix}
0 \\ 1 \\ 0
\end{pmatrix}=\frac{1}{3} \begin{pmatrix}
-\e^{-3t}+1\\ 2\e^{-3t}+1\\ -\e^{-3t}+1
\end{pmatrix},\qquad t\ge 0,
\]
and so we find that the latter semigroup dominates the former only for all $t\ge t_0\approx 1.006$, see \autoref{fig:eventdom}.
\begin{figure}[ht]
\includegraphics[scale=.45]{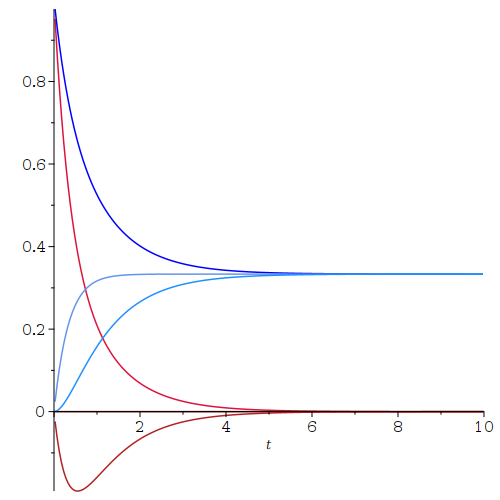} 
\includegraphics[scale=.45]{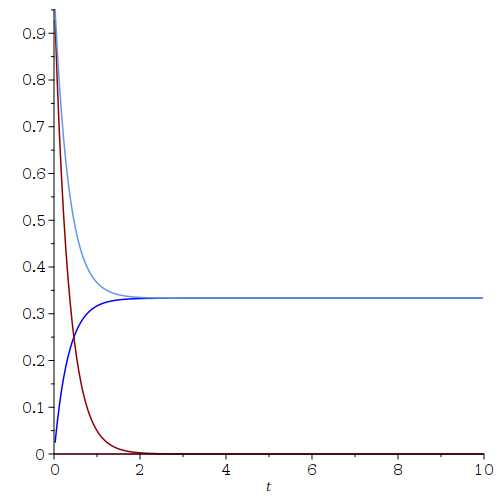} 
\caption{The heat flow driven by $-\mathcal L_\mH$ (red) and $-\mathcal L_\mG$ (blue) in (\ref{item:plotgraphyhper}) for the initial conditions 
$u_0=\begin{pmatrix}
1 & 0 & 0 
\end{pmatrix}^\top$ as well as $u_0=\begin{pmatrix}
0 & 0 & 1
\end{pmatrix}^\top$ (left); and for the initial condition 
$u_0=\begin{pmatrix}
0 & 1 & 0 
\end{pmatrix}^\top$ (right).
}
\label{fig:eventdom}.
\end{figure}

\item 
Consider the directed hypergraph $\mH_1$ with incidence matrix
%\[
%\mathcal I=\begin{pmatrix}
%-1 & 0 & 1 & 0\\
%1 & 1 & -1 & 0\\
%0 & 0 & 1 & -1\\
%0 & 1 & -1 & -1\\
%0 & -1 & 0 & 1
%\end{pmatrix}\qquad \hbox{whence}\qquad 
%\mathcal L=\begin{pmatrix}
%2 & -2 & {1} & -1 & 0\\
%-2 & 3 & -1 & {2} & -1\\
%{1} & -1 & 2 & 0 & -1\\
%-1 & {2} & 0 & 3 & -2\\
%0 & -1 & -1 & -2 & 2
%\end{pmatrix}.
%\]
\[
\mathcal I_1=\begin{pmatrix}
-1 & 0 & 0 \\
1 & 	-1 & 0\\
1 & 0 & -1 \\
0 & 1 & -1
\end{pmatrix}\qquad \hbox{whence}\qquad 
\mathcal L_1=\begin{pmatrix}
1 & -1 & -1 & 0\\
-1 & 2 & 1 & -1\\
-1 & 1 & 2 & -1\\
0 & -1 & -1 & 2
\end{pmatrix}.
\]
Because some of its off-diagonal entries are positive, $\mathcal L_1$ is not a Z-matrix: accordingly, $(\e^{-t\mathcal L_1})_{t\ge 0}$ is not a positive semigroup. However, $0$ is a simple eigenvalue of $\mathcal L_1$ whose corresponding eigenspace is spanned by the normed vector $\varphi_1=\frac{1}{\sqrt{7}}\begin{pmatrix}
2& 1 & 1& 1\end{pmatrix}^\top$; 
hence $(\e^{-t\mathcal L_1})_{t\ge 0}$ is eventually irreducible by \autoref{lem:glu}. Indeed, using Maple 2024 we find
that
 %$\e^{-2\mathcal L}\not \ge 0$, $\e^{-3\mathcal L} \ge 0$
 $\e^{-t\mathcal L_1}\ge 0$ only for all $t\ge t_0\approx 1.216$. However, \eqref{eq:ellinftysi} is not satisfied, hence $(\e^{-t\mathcal L_1})_{t\ge 0}$ is not asymptotically $\infty$-contractive.
 
The same holds considering  the directed hypergraph $\mH_2$ with incidence matrix
\[
\mathcal I_2=\begin{pmatrix}
1 & 1 & 1 & 1 \\
1 & 	-1 & 0 & 0\\
-1 & 0 & -1& 0 \\
-1 & 0 & 0 & -1
\end{pmatrix}\qquad \hbox{whence}\qquad 
\mathcal L_2=\begin{pmatrix}
4 & 0 & -2 & -2\\
0 & 2 & -1 & -1\\
-2 & -1 & 2 & 1\\
-2 & -1 & 1 & 2
\end{pmatrix}.
\]
In this case, the null space of $\mathcal L_2$ is spanned by the normed vector
\(\varphi_1=\frac{1}{2}\begin{pmatrix} 1& 1& 1& 1 \end{pmatrix}^\top\):
hence $(\e^{-t\mathcal L_2})_{t\ge 0}$ is eventually irreducible and indeed $\e^{-t\mathcal L_2}\ge 0$ only for all $t\ge t_0\approx 0.655$. Furthermore, $\|\varphi_1\|_1 \|\varphi_1\|_\infty=1$ and, hence, \eqref{eq:ellinftysi} is satisfied: accordingly, by \autoref{cor:eventlinftycontr-matr} $(\e^{-t\mathcal L_2})_{t\ge 0}$ is asymptotically $\infty$-contractive.
Because the directed hypergraph induced by $\mathcal I_2$ is even equipotent, we can apply \autoref{prop:greiner} and conclude that $(\e^{-t\mathcal L_2})_{t\ge 0}$ is  $\infty$-contractive, too.

%\item %%\begin{frame}{Spectral theory}
%Let a directed hypergraph $\mH$ have as many hyperedges as vertices, $\#\mV=\#\mE\ge 3$, with $\deg(\me)=\#\mV$ for each $\me\in \mE$.
%
%If $\mv_i$ is source in $\me_i$, $i=1,\ldots,\#\mV$, and target in all other hyperedges, 
%It is positive (and, in this case, irreducible, too) if and only if $\#\mV=3$ and $=4$; and $\infty$-contractive for $\#\mV=3$, $=4$ and $=5$. For $\#\mV\ge 5$, the semigroup is not even eventually irreducible, but it is eventually $\infty$-contractive for $\#\mV\ge 6$.
%
%If, on the other hand, all vertices of $\mH$ are co-oriented in each hyperedge, then $\mathcal L$ is the all-1 
%%$\mV\times \mV$ 
%matrix, and accordingly its spectrum is $0^{\{\#\mV-1\}},\#\mV$. Therefore, $(\e^{-t\mathcal L})_{t\ge 0}$ is neither positive, nor eventually irreducible, nor $\infty$-contractive for any $\#\mV$. One can check with Maple 2024 that none of these semigroups is $\infty$-contractive, either, even though \eqref{eq:ellinftyno} is satisfied.
%
%\item {\color{red}(Eventually) positive and non-$\infty$-contractive but eventually $\infty$-contractive?}

\item 
Consider the equipotent directed hypergraph $\mH$ with signed incidence matrix
\[
\mathcal I_\mH:=\begin{pmatrix}
-1 \\ -1 \\ 1 \\ 1
\end{pmatrix}\qquad\hbox{whence}\qquad 
\mathcal L_{\mH}=\begin{pmatrix}
1 & 1 & -1 & -1\\
1 & 1 & -1 & -1\\
-1 & -1 & 1 & 1\\
-1 & -1 & 1 & 1
\end{pmatrix}
\]
We already know from (\ref{item:posit-1hyper}) that $(\e^{-t\mathcal L})_{t\ge 0}$ is neither positive nor $\infty$-contractive (and, in fact, not even asymptotically positive or eventually $\infty$-contractive). However, the Markov property of the heat flow can be achieved by suitably modifying the directed hypergraph, as the follow examples show.
\begin{itemize}
\item Connect with one edge the vertices in $\me_{\rm sour}$ and with another edge the vertices in $\me_{\rm targ}$,: then
\[
\mathcal I_{\mH'}=\begin{pmatrix}
-1 & -1 & 0\\
-1 & 1 & 0\\
1 & 0 & -1\\
1 & 0 & 1
\end{pmatrix}\quad \hbox{whence}\quad \mathcal L_{\mH'}=\begin{pmatrix}
2 & 0 & -1 & -1\\
0 & 2 & -1 & -1\\
-1 & -1 & 2 & 0\\
-1 & -1 & 0 & 2
\end{pmatrix}.
\]
\item Add to $\mH$ one hyperedge $\me'$ such that $\me_{\rm sour},\me_{\rm targ}$ both contain one source and one target vertex of $\mH$: then
\[
\mathcal I_{\mH''}=\begin{pmatrix}
-1 & -1\\
-1 & 1\\
1 & -1\\
1 & 1 
\end{pmatrix}\quad \hbox{whence}\quad \mathcal L_{\mH''}=\begin{pmatrix}
2 & 0 & 0 & -2\\
0 & 2 & -2 & 0\\
0 & -2 & 2 & 0\\
-2 & 0 & 0 & 2
\end{pmatrix}.
\]
Observe that neither semigroup $(\e^{-t\mathcal L_{\mH'}})_{t\ge 0},(\e^{-t\mathcal L_{\mH''}})_{t\ge 0}$ is the modulus semigroup of $(\e^{-t\mathcal L_\mH})_{t\ge 0}$.

In either case, the semigroup driving the heat flow is stochastic.
 \end{itemize}
\item In the case where $\mH$ is a general hypergraph consisting of one hyperedge and $\#\mV\ge 3$, positivity of $(\e^{-t\mathcal L})_{t\ge 0}$  can be restored, as in (6), adding a hyperedge $\me=\vv{\{\mv\},\{\mw\}}$ between any two vertices that are co-oriented within $\mH$, thus forming a directed hypergraph $\mH'$.

For instance, consider the  oriented hypergraph with signed incidence matrix
\begin{equation}\label{eq:hyper3a-decor}
\mathcal I_{\mH'}= \begin{pmatrix}
-1 & 0\\
1 & -1\\
1 & 1
\end{pmatrix}\quad\hbox{whence}\quad 
\mathcal L_{\mH'}=
\begin{pmatrix}
1 & -1 & -1\\
-1 & 2 & 0\\
-1 & 0 & 2
\end{pmatrix},
\end{equation}
whose eigenvalues are $0,2,3$ with null space spanned by $\frac{1}{\sqrt{6}}\begin{pmatrix}
2 & 1 & 1
\end{pmatrix}^\top$.
Again by \autoref{prop:posit} and \autoref{lem:mug07}, $(\e^{-t\mathcal L})_{t\ge 0}$ is positive and irreducible but not $\infty$-contractive -- indeed, by \autoref{cor:eventlinftycontr-matr} not even asymptotically $\infty$-contractive, since $J_0$ is a singleton and for the $\|\cdot\|_2$-normed eigenvector $\varphi_1$ corresponding to the eigenvalue 0 one finds
\(
\| \varphi_1\|_1 \|\varphi_1\|_\infty=\frac{4}{3}>1
\),
i.e., \eqref{eq:ellinftysi} is not satisfied.
\end{enumerate}

\section{Dual directed hypergraphs}\label{sec:graphypergra}.

The class of directed graphs is not closed under duality: that is, the dual of a directed graph is not necessarily a directed graph, but generally only a directed hypergraph. It is all the more interesting that the theory of hypergraphs is, in fact, quite rich.

\subsection{Directed hypergraphs dual to graphs}
Let $\mG$ be a directed graph with incidence matrix $\mathcal I_\mG$; in agreement with the general case introduced in \autoref{rem:misc}.(\ref{remitem:josmulrol}) we call the directed hypergraph with incidence matrix $\mathcal I^\top_\mG $ the \textit{dual directed hypergraph} of $\mG$, and we denote it by $\mG^*$. Accordingly, its Laplacian is $\mathcal L_{\mG^*}=\mathcal I^\top_\mG \mathcal I_\mG$.

We can now refine \autoref{lem:gersh0}.

\begin{lemma}
Let $\mG$ be a directed graph. Then all eigenvalues of $\mathcal L_{\mG^*}$ lie in $[0,2 \deg_{\max}(\mV)]$.

\end{lemma}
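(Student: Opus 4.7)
The plan is to reduce the claim to the corresponding well-known bound for the (primal) Laplacian $\mathcal L_\mG = \mathcal I_\mG \mathcal I_\mG^\top$ of the directed graph $\mG$, and then invoke the standard Gershgorin estimate for graph Laplacians.

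First, since $\mathcal L_{\mG^*} = \mathcal I_\mG^\top \mathcal I_\mG$ is of the form $A^\top A$, it is positive semidefinite, so all its eigenvalues are $\geq 0$. For the upper bound, recall the elementary fact (already used in the proof of \autoref{lem:gersh0}) that the non-zero eigenvalues of $\mathcal I_\mG^\top \mathcal I_\mG$ and $\mathcal I_\mG \mathcal I_\mG^\top$ coincide (with multiplicities). Hence it suffices to bound the non-zero eigenvalues of the $\#\mV \times \#\mV$ matrix $\mathcal L_\mG$.

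Now $\mG$ is a directed graph, so each column of $\mathcal I_\mG$ has exactly one entry equal to $+1$ and one entry equal to $-1$ (the orientations). From \eqref{eq:josmul-basic}, the diagonal entries of $\mathcal L_\mG$ are $\deg(\mv)$ and the off-diagonal entries $\mathcal L_{\mv\mw}$ satisfy $|\mathcal L_{\mv\mw}| \leq \#\mE^{\backslash\!\backslash}_{\mv\mw}+\#\mE^{\not\backslash\!\backslash}_{\mv\mw}$, which for a graph equals the number of edges joining $\mv$ and $\mw$. Thus $\sum_{\mw\neq\mv}|\mathcal L_{\mv\mw}| \leq \deg(\mv) \leq \deg_{\max}(\mV)$. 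Applying Gershgorin's theorem (exactly as in the proof of \autoref{lem:gersh1}, and as already recorded in the bound \eqref{eq:firstgershc-graph}), every eigenvalue $\lambda$ of $\mathcal L_\mG$ satisfies
\[
\lambda \leq \deg(\mv) + \sum_{\mw\neq\mv}|\mathcal L_{\mv\mw}| \leq 2\deg_{\max}(\mV)
\]
for some $\mv\in \mV$. Combining with the co-spectrality away from $0$ yields the claim for $\mathcal L_{\mG^*}$.

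There is really no hard step here: once one recognizes $\mathcal L_{\mG^*}$ as the Gram matrix $\mathcal I_\mG^\top \mathcal I_\mG$, the whole assertion is a direct corollary of the standard graph-Laplacian bound. The only thing to check carefully is that the graph structure of $\mG$ (two non-zero entries per column of $\mathcal I_\mG$) is used, since in the general hypergraph setting of \autoref{lem:gersh1} one only has $\lambda \leq \max_{\mv\in\mV}\sum_{\me\in\mE_\mv}\deg(\me)$, which does not collapse to $2\deg_{\max}(\mV)$ unless $\deg(\me)\equiv 2$.
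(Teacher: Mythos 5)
Your proof is correct and follows essentially the same route as the paper: positive semidefiniteness of the Gram matrix for the lower bound, co-spectrality of $\mathcal I_\mG^\top\mathcal I_\mG$ and $\mathcal I_\mG\mathcal I_\mG^\top$ away from $0$, and the Gershgorin bound \eqref{eq:firstgershc-graph} for the graph Laplacian $\mathcal L_\mG$. You merely spell out the derivation of \eqref{eq:firstgershc-graph}, which the paper cites directly.
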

\begin{proof}
In view of \eqref{eq:firstgershc-graph}, the assertion is an immediate consequence of the isospectrality of $\mathcal L_{\mG^*}=\mathcal I^\top \mathcal I$ and $\mathcal L_{\mG}=\mathcal I \mathcal I^\top$ away from 0.
\end{proof}

%Consider the \textit{dual directed hypergraph} $\mH=\mG^*$ with incidence matrix $\mathcal I^\top$: its Laplacian is $\mathcal L_\mH=\mathcal I^\top\mathcal I$.

\begin{prop}
Let $\mG=(\mV,\mE)$ be a simple directed graph. Then the multiplicity of $0$ as an eigenvalue of the dual Laplacian agrees with the number of independent cycles of $\mG$.

Accordingly,
$(\e^{-t\mathcal L_{\mG^*}})$ is exponentially stable if and only if $\mG$ is a forest.
\end{prop}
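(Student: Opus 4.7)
The plan is to reduce the multiplicity of $0$ in $\sigma(\mathcal L_{\mG^*})$ to the dimension of the kernel of the incidence matrix, and then identify that with the cyclomatic number of $\mG$ via the standard rank formula for signed incidence matrices of graphs.

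First, since $\mathcal L_{\mG^*}=\mathcal I^\top\mathcal I$ is symmetric positive semidefinite, the identity $\langle\mathcal I^\top\mathcal I f,f\rangle=\|\mathcal I f\|_2^2$ gives $\ker\mathcal L_{\mG^*}=\ker\mathcal I$. So it suffices to compute $\dim\ker\mathcal I$. Alternatively (and equivalently), I could invoke \eqref{eq:ranknul} directly: since $\mG$ is a directed graph, $\dim\ker\mathcal L_\mG=c$, the number of connected components, and hence
\[
\dim\ker\mathcal L_{\mG^*}=\dim\ker\mathcal L_\mG-\#\mV+\#\mE=\#\mE-\#\mV+c.
\]

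Next, I would recall the classical fact that the signed incidence matrix of a simple directed graph has rank $\#\mV-c$: one direction is that the characteristic functions of the $c$ connected components span $\ker\mathcal I^\top$ (they are the constants on each component, and every edge has one source and one target in the same component), and the other direction is a standard induction on spanning trees, showing that the columns corresponding to the edges of a spanning forest are linearly independent. By rank--nullity, $\dim\ker\mathcal I=\#\mE-(\#\mV-c)=\#\mE-\#\mV+c$, which is the first Betti number of $\mG$, i.e., the cyclomatic number, i.e., the number of independent cycles. This proves the first assertion.

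For the corollary, $\mathcal L_{\mG^*}$ is self-adjoint and positive semidefinite, so by the Spectral Theorem $(\e^{-t\mathcal L_{\mG^*}})_{t\ge 0}$ is exponentially stable if and only if $0\notin\sigma(\mathcal L_{\mG^*})$, equivalently $\#\mE-\#\mV+c=0$, i.e., $\mG$ has no independent cycles, i.e., $\mG$ is a forest.

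I expect no real obstacle here; the only care needed is to cite or briefly justify the rank formula $\mathrm{rank}(\mathcal I)=\#\mV-c$, which is a standard result in algebraic graph theory and could either be proved in one sentence via spanning trees or invoked directly.
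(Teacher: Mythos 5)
Your proposal is correct and follows essentially the same route as the paper: the paper also invokes \eqref{eq:ranknul} together with $\dim\ker\mathcal L_\mG=c$ to get $\dim\ker\mathcal L_{\mG^*}=\#\mE-\#\mV+c$, identifies this with the cyclomatic number, and deduces exponential stability from positive semidefiniteness and the Spectral Theorem. Your additional remarks (that $\ker\mathcal L_{\mG^*}=\ker\mathcal I$ and that $\operatorname{rank}\mathcal I=\#\mV-c$ via spanning forests) just make explicit the rank--nullity facts the paper cites implicitly.
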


%\begin{proof}
%By general linear algebraic properties it is well-known that, 
%$\dim\ker (\mathcal I\mathcal I^\top)-\dim\ker (\mathcal I^\top\mathcal I)=\dim\ker (\mathcal I^\top)-\dim\ker (\mathcal I)=\# \mV-\# \mE$. 
%Now, the multiplicity of 0 as an eigenvalue of the Laplacian of $\mG$ is $\dim\ker (\mathcal I\mathcal I^\top)=c$, where $c$ is the number of connected components of $\mG$. The claim follows recalling that $\deg(\me)-\#\mV+c$ is the number of independent cycles in a graph.
%\end{proof}
%
%\begin{lemma}
%$\dim\ker \mathcal L_{\mG^*}$ agrees with the number of independent cycles of $\mG$.
%\end{lemma}

%\begin{itemize}
%\item<1>aaa
%\item<2>bbb
%\end{itemize}

\begin{proof}
By \eqref{eq:ranknul} we find
%the Rank-Nullity Theorem, we find
\[
\begin{split}
\dim\ker \mathcal L_{\mG^*}
%&=\dim \ker \mathcal I\\
%&=(\#\mE-r)-(\#\mV-r)-\dim \ker \mathcal I^\top\\
&=\#\mE-\#\mV+c,
\end{split}
\]
since it is well-known that $\dim\ker \mathcal I^\top=\dim\ker \mathcal L_\mG$ agrees with the number $c$ of connected components of $\mG$. Now the assertions follows, as $\#\mE-\#\mV+c$ is the cyclomatic number of $\mG$ (regardless of the chosen orientation of the edges).

In particular, $\mG$ is a forest if and only if $0$ is not an eigenvalue of $\mathcal L_{\mG^*}$: in this case, the lowest eigenvalue must be strictly positive, and the claim follows.
\end{proof}

\begin{prop}\label{prop:chatg-posit}
Let $\mG=(\mV,\mE)$ be a simple directed graph.
Then $(\e^{-t\mathcal L_{\mG^*}})$ is positive if and only if $\deg_{\max}(\mV)\le 2$. 
\end{prop}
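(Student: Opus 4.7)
The plan is to combine \autoref{prop:posit} (which identifies positivity of $(\e^{-t\mathcal L_{\mG^*}})_{t\ge 0}$ with $\mathcal L_{\mG^*}$ being a Z-matrix) with a direct computation of the off-diagonal entries of $\mathcal L_{\mG^*}=\mathcal I^\top \mathcal I$. For a simple directed graph, every edge has exactly two endpoints, so each diagonal entry equals $\deg(\me)=2$. For distinct edges $\me\neq \mf$, the entry $(\mathcal I^\top \mathcal I)_{\me\mf}=\sum_{v\in\mV}\iota_{v\me}\iota_{v\mf}$ is a sum over the vertices shared by $\me$ and $\mf$; by simplicity of $\mG$, at most one such vertex exists, so this off-diagonal entry is $+1$ when $\me,\mf$ are co-oriented at their shared vertex, $-1$ when anti-oriented, and $0$ when disjoint.

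I would prove the ``only if'' implication by contraposition, using only this structural description. Assuming $\deg_{\max}(\mV)\ge 3$, I can pick a vertex $v$ incident to three edges $\me_1,\me_2,\me_3$. Each of them has $v$ as either source or target, so the pigeonhole principle forces two of them, say $\me_i,\me_j$, to be on the same side, hence co-oriented at $v$. The corresponding off-diagonal entry then equals $+1>0$, so $\mathcal L_{\mG^*}$ fails to be a Z-matrix and $(\e^{-t\mathcal L_{\mG^*}})_{t\ge 0}$ is not positive.

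For the ``if'' direction, I would invoke the classical structural fact that any simple graph with $\deg_{\max}(\mV)\le 2$ is a disjoint union of paths and cycles, and exploit that $\mathcal L_{\mG^*}$ does depend on edge orientation (see \autoref{rem:notation-after-josmul}) by choosing on each component a coherent orientation --- each path traversed from one endpoint to the other, each cycle cyclically. Then every vertex of degree $2$ has precisely one incoming and one outgoing incident edge, so any two edges sharing a vertex are necessarily anti-oriented there. Hence every off-diagonal entry of $\mathcal L_{\mG^*}$ is either $-1$ or $0$, so $\mathcal L_{\mG^*}$ is a Z-matrix and positivity follows from \autoref{prop:posit}.

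The main obstacle is precisely this orientation subtlety: since the dual Laplacian is orientation-dependent (witness e.g.\ the ``V-shaped'' oriented graph on three vertices with both edges outgoing from the apex, which has $\deg_{\max}=2$ yet yields the off-diagonal entry $+1$), the ``if'' direction cannot be carried out orientation-free, and one must either re-orient the graph component-wise as above, or read the proposition under the tacit assumption that each path- or cycle-component is coherently oriented. With that proviso, the Z-matrix characterisation handles both implications uniformly, and no further spectral or semigroup-theoretic input is needed.
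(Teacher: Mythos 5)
Your proof is correct and follows essentially the same route as the paper's: the same entrywise computation of $\mathcal I^\top\mathcal I$ (diagonal entries $2$, off-diagonal entries $\pm1$ or $0$ according to whether two edges meet and how they are oriented at the shared vertex), the same pigeonhole argument at a vertex of degree at least three for the ``only if'' part, and the same decomposition into paths and cycles with a coherent re-orientation for the ``if'' part, all combined with the Z-matrix criterion for positivity. The orientation subtlety you flag is handled in the paper in exactly the way you suggest: its proof explicitly establishes that an orientation turning $\mathcal I^\top\mathcal I$ into a Z-matrix can be found if and only if $\deg_{\max}(\mV)\le 2$, i.e.\ it reads the ``if'' direction as permitting a component-wise coherent orientation.
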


\begin{proof}
%Short answer: for a simple undirected graph $G$ with (signed) directed incidence matrix $D$, the form
%
%$$
%b(x)=\|D^\top x\|^2 = x^\top (D D^\top)x
%$$
%
%is a Dirichlet form **iff $G$ has maximum degree $\le 2$** — i.e. $G$ is a disjoint union of paths and cycles (including isolated vertices and isolated edges).
%
%Below is a short, self-contained proof and some remarks.
%
%Setup / notation
%
%Let $G=(V,E)$ be a simple graph with $|V|=n$, $|E|=m$. Fix an orientation of edges; the incidence matrix $D$ is the $m\times n$ matrix whose row $r_e$ for edge $e=\{u,v\}$ has entries
%
%$$
%(D)_{e,w}=\begin{cases}
%+1,& w=\text{tail of }e,\\
%-1,& w=\text{head of }e,\\
%0,&\text{otherwise.}
%\end{cases}
%$$
%
%(Flipping the orientation of an edge multiplies the corresponding row by $-1$; we are allowed to choose orientations.)

We have $(\mathcal I^\top \mathcal I)_{\me\mf}=\iota_{\me\cdot}\cdot \iota_{\mf\cdot}$. For distinct edges $\me\ne \mf$, this inner product is $\pm1$ if the edges $\me,\mf$ meet at one vertex (the sign depends on the orientations at the common vertex), and is $0$ otherwise. The diagonal entries are $ (\mathcal I^\top \mathcal I)_{\me\me}=2$ for a simple directed graph, since each edge has two non-zero entries $+1,-1$.

Let us show that an orientation of $\mG$ that turns $\mathcal I^\top\mathcal I$ into a Z-matrix can be found if and only if $\deg_{\max}(\mV)\le 2$.
%A symmetric matrix is the generator of a Dirichlet form (in this discrete finite setting) exactly when all off-diagonal entries are $\le0$. So we need $ (DD^\top)_{ef}\le0$ for every $e\ne f$.

``$\Rightarrow$''
Let $\mv\in \mV$ with $\deg(\mv)\ge3$ and let $\me_1,\me_2,\me_3$ be three distinct edges incident to $\mv$. In the rows $\iota_{\cdot\me_i }$ of $\mathcal I$ the only overlapping coordinate among these rows (for the three distinct edges) is the coordinate corresponding to vertex $\mv$. At that coordinate each row has either $+1$ or $-1$. Thus the inner product $\iota_{\cdot\me_i}\cdot \iota_{\cdot\me_j}$ for $i\ne j$ equals the product of these two signs (since two edges can be simultaneously incident in at most one vertex). Accordingly, at least two of these three products must agree, say
% With three signs in $\{+1,-1\}$ there must be two equal signs (pigeonhole), so for some pair $i\ne j$ we get 
$\iota_{\cdot\me_i}\cdot \iota_{\cdot\me_j}=+1$. We have hence proved that there is no orientation making all off-diagonal entries of $\mathcal I^\top\mathcal I$ nonpositive. 

``$\Leftarrow$'' If $\deg_{\max}(\mV)\le2$, then $\mG$ is a disjoint union of isolated vertices, paths and cycles. It is hence possible to orient each connected component coherently in such a way that any vertex $\mv$ of degree $2$ has exactly one incoming edge (say, $\me$) and one outgoing edge (say, $\mf$).

Therefore, the two rows of $\mathcal I$ corresponding to $\me,\mf$ have opposite signs at $\mv$, so their contribution to the inner product is $-1$. For two edges that do not meet (and in particular for products corresponding to vertices of degree 0 or 1) the inner product is $0$. Thus for every pair of distinct edges $e\ne f\in \mE$ we have $(\mathcal I^\top \mathcal I)_{\me\mf}\in\{0,-1\}$, i.e. $\le0$. Hence $\mathcal I^\top \mathcal I$ has nonpositive off-diagonals, so $-\mathcal L_{\mG^*}$ generates a positive semigroup.
\end{proof}

%\textbf{Examples / non-examples}
%
%* A path or cycle: works. (E.g. the triangle $C_3$ directed cyclically — all edge pairs that touch have inner product $-1$.)
%* Star $K_{1,3}$: fails (center has degree 3).
%* General graphs with a vertex of degree $\ge3$: fail.
%* Parallel edges or loops (multigraphs): you need to check the specific incidence convention; the simple-graph statement above assumes no loops and no multiple edges.
%
%\textbf{Summary}
%
%For the usual (simple undirected) incidence matrix $D$ of a graph $G$,
%
%$$
%b(x)=\|D^\top x\|^2\ \text{is a Dirichlet form}\quad\Longleftrightarrow\quad \Delta(G)\le2,
%$$
%
%where $\Delta(G)$ is the maximum degree. Equivalently, $G$ is a disjoint union of paths and cycles.
%

\begin{cor}
Let $\mG=(\mV,\mE)$ be a simple directed graph. If $\deg_{\max}(\mV)\le2$, then $(\e^{-t\mathcal L^*})_{t\ge 0}$ is sub-Markovian.

Furthermore, 
$(\e^{-t\mathcal L_{\mG^*}})$ is stochastic if and only if $\deg\equiv 2$.
\end{cor}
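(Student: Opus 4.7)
The plan is to establish the two claims separately, building on the positivity characterisation already given in \autoref{prop:chatg-posit}.

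For the first assertion, I would adopt the coherent orientation constructed in the proof of \autoref{prop:chatg-posit}: each connected component of $\mG$ is an isolated vertex, a coherently oriented path, or a coherently oriented cycle, so that every vertex of degree $2$ has exactly one incoming and one outgoing edge. Positivity of $(\e^{-t\mathcal L_{\mG^*}})_{t\ge 0}$ then comes for free from that proposition, so only $\infty$-contractivity remains. I would apply \autoref{lem:mug07} to the dual hypergraph $\mG^*$: each of its vertices $\me$ is an edge of $\mG$ and has degree $2$ in $\mG^*$ (as $\me$ has two endpoints in $\mG$), while the off-diagonal entries $(\mathcal L_{\mG^*})_{\me\mf}$, already analysed in the proof of \autoref{prop:chatg-posit}, lie in $\{0,-1\}$ and equal $-1$ exactly when $\me,\mf$ share an endpoint in $\mG$. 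Since $\deg_{\max}(\mV)\le 2$, each endpoint of $\me$ is incident to at most one other edge, so $\sum_{\mf\ne\me}|(\mathcal L_{\mG^*})_{\me\mf}|\le 2$ for every $\me\in\mE$. This is exactly condition \eqref{eq:diag-dom} for $\mG^*$, whence \autoref{lem:mug07} yields $\infty$-contractivity; combined with positivity this is the sub-Markov property.

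For the second assertion I would exploit self-adjointness of $\mathcal L_{\mG^*}$ on $\ell^2(\mE)$ to reduce stochasticity to the identity $\mathcal L_{\mG^*}\mathbf{1}_{\#\mE}=0$. Because $\mathcal L_{\mG^*}=\mathcal I^\top\mathcal I$ is positive semidefinite, this is in turn equivalent to $\mathcal I\mathbf{1}_{\#\mE}=0$, whose $\mv$-coordinate is $\deg^{\mathrm{in}}(\mv)-\deg^{\mathrm{out}}(\mv)$. So stochasticity forces every vertex to be balanced, hence of even degree; coupled with $\deg_{\max}(\mV)\le 2$ inherited from the underlying positivity (\autoref{prop:chatg-posit}), this yields $\deg(\mv)\in\{0,2\}$, and modulo isolated vertices (which contribute nothing to $\mathcal L_{\mG^*}$) we obtain $\deg\equiv 2$, i.e., $\mG$ is a disjoint union of cycles. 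Conversely, if $\deg\equiv 2$, coherently orienting each cycle simultaneously produces the positivity of the first assertion and the balance $\mathcal I\mathbf{1}_{\#\mE}=0$; the first assertion yields sub-Markovianity, and the identity $\mathcal L_{\mG^*}\mathbf{1}_{\#\mE}=0$ then upgrades this to stochasticity.

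The one subtlety I foresee is that both claims hinge on the \emph{same} orientation being used: the sub-Markov argument needs the orientation from \autoref{prop:chatg-posit} to obtain positivity, while stochasticity needs that orientation to balance each vertex. These two requirements fortunately coincide on coherently oriented cycles, so a single choice of orientation serves both purposes and the proof closes.
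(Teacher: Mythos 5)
Your argument is correct, but it reaches both conclusions by a genuinely different route than the paper. For sub-Markovianity, the paper never uses the diagonal-dominance criterion: it shows $\mathcal L_{\mG^*}\mathbf{1}\le 0$ by describing $\mathcal L_{\mG^*}$ explicitly on each connected component (for a coherently oriented cycle, $\mathcal I$ is a permutation of $\mathcal I^\top$, so $\mathcal L_{\mG^*}$ is permutation-similar to $\mathcal L_\mG$; for a path, an induction exhibits $\mathcal L_{\mG^*}$ as a tridiagonal block with diagonal $2$ and off-diagonals $-1$), and then upgrades positivity to $\infty$-contractivity via the ``positive plus $A\mathbf{1}\le 0$'' mechanism of \autoref{rem:inftyposobv}. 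You instead verify condition \eqref{eq:diag-dom} of \autoref{lem:mug07} directly on the dual hypergraph by a two-line counting argument (each dual vertex has degree $2$, and at most two edges of $\mG$ meet a given edge when $\deg_{\max}(\mV)\le 2$); this is shorter, avoids the induction, and has the nice by-product that $\infty$-contractivity holds for \emph{every} orientation, with only positivity requiring the coherent one. For stochasticity, the paper again argues componentwise (permutation-similarity for cycles, the tridiagonal computation to exclude paths), whereas you reduce everything to $\ker(\mathcal I^\top\mathcal I)=\ker\mathcal I$ and the balance condition $\deg^{\mathrm{in}}\equiv\deg^{\mathrm{out}}$, which is more conceptual and makes the role of the orientation transparent; your closing remark that the same coherent orientation delivers both positivity and balance is exactly the point the paper leaves implicit.

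One shared caveat: your ``modulo isolated vertices'' is honest, since a disjoint union of coherently oriented cycles and isolated vertices has the same dual Laplacian as the cycles alone, so the semigroup is stochastic although $\deg\not\equiv 2$. This is a defect of the statement itself rather than of your proof; the paper's own argument has the same blind spot when it infers from $\deg\not\equiv 2$ that the minimal degree is $1$. No action is needed beyond either excluding isolated vertices or reading $\deg\equiv 2$ on the support of $\mE$.
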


\begin{proof}
Let $\deg_{\max}(\mV)\le 2$, whence by \autoref{prop:chatg-posit} $(\e^{-t\mathcal L^*})_{t\ge 0}$ is positive.

If $\deg_{\mG}\equiv 2$, and hence each connected component of $\mG$ is a cycle, then the incidence matrix $\mathcal I$ is a permutation of its transpose $\mathcal I^\top$ and, accordingly, $\mathcal L_{\mG^*}$ coincides, up to a permutation, with $\mathcal L_{\mG}$, hence $(\e^{-t\mathcal L^*})_{t\ge 0}$ is stochastic, too, and in particular sub-Markovian.

If $\deg\not\equiv 2$, hence the minimal degree of $\mG$ is 1, then a connected component of $\mG$ consists of a path, then one can prove by induction that $\mathcal L^*$ is a block matrix all of whose blocks are tri-diagonal matrices whose diagonal entries are all 2 and whose upper and lower diagonal entries are all $-1$. Accordingly, $\mathbf{1}\not\in\ker (\mathcal L^*)$: a contradiction to the stochastic property of $(\e^{-t\mathcal L^*})_{t\ge 0}$. However $\mathcal L^*\mathbf{1}\le 0$, which implies the sub-Markovian property.
\end{proof}

%\begin{prop}\label{prop:dual-eventpos}
%Let $\mG$ be a simple directed, connected graph that contains $k$ cycles. Then the following assertions hold.
%\begin{enumerate}[(1)]
%\item If $k\ge 1$, then $(\e^{-t\mathcal L_{\mG^*}})_{t\ge 0}$ is eventually irreducible if and only if $k=1$ and $\mG$ is a directed cycle.
%\item If $k=0$, then $(\e^{-t\mathcal L_{\mG^*}})_{t\ge 0}$ is eventually irreducible if and only if $\mG$ is a directed tree.
%\end{enumerate}
%\end{prop}
%
%Here we call a directed $\mG$ a \textit{directed path} if there exist two vertices $\mv_1,\mv_2\in \mV$ such that
%\[
%\begin{split}
%\deg^{\mathrm{in}}(\mv_2)&=1=\deg^{\mathrm{out}}(\mv_1),\\
%\deg^{\mathrm{out}}(\mv_2)&=0=\deg^{\mathrm{in}}(\mv_1),\quad\hbox{and}\\
%\deg^{\mathrm{in}}(\mv)&=1=\deg^{\mathrm{out}}(\mv)\qquad\hbox{for all }\mv\in\mV\setminus\{\mv_1,\mv_2\};
%\end{split}
%\]
% we call $\mG$ a \textit{directed cycle} if
%\[
%\deg^{\mathrm{in}}(\mv)=1=\deg^{\mathrm{out}}(\mv)\qquad\hbox{for all }\mv\in\mV;
%\]
%finally we call $\mG$ a \textit{directed tree} if a vertex $\mv_0$ can be chosen (the ``root'') such that each path emanating from the $\mv_0$ to any other vertex is a directed path.
%
%\begin{proof}
%In view of \autoref{lem:glu}, {\color{red}completare! probabilmente sbagliato, comunque}
%\end{proof}

\begin{exa}
\begin{enumerate}
\item Consider the directed path graph $\mG$ on four vertices. The corresponding Laplacian and dual Laplacians are
\begin{equation}\label{eq:laplduallap->0}
\mathcal L=
\begin{pmatrix}
1 & -1 & 0 & 0\\
-1 & 2 & -1 & 0\\
0 & -1 & 2 & -1 \\
-1 & 0 & -1 & 2 \\
\end{pmatrix}
\qquad \hbox{and}\qquad
\mathcal L^*=\begin{pmatrix}
2 & -1 & 0\\
-1 & 2 & -1 \\
0 & -1 & 2 \\
\end{pmatrix}
\end{equation}
The eigenvalues of the latter are $2-\sqrt{2},2,2+\sqrt{2}$. The semigroup $(\e^{-t\mathcal L^*})_{t\ge 0}$ is sub-Markovian, but neither Markovian nor stochastic; indeed, the invariant measure is given by the normed vector \(\varphi_1=\frac{1}{2}\begin{pmatrix} 1& \sqrt{2}& 1\end{pmatrix}^\top\)
that spans the lowest eigenvalue
that is, the rescaled semigroup satisfies
\[
\e^{-t(\mathcal L^*-(2-\sqrt{2}))}\begin{pmatrix}
1\\ \sqrt{2}\\ 1
\end{pmatrix}=\begin{pmatrix}
1\\ \sqrt{2}\\ 1
\end{pmatrix}\qquad\hbox{for all }t\ge 0.
\]
(If we consider the path on three vertices, instead, then the behaviour of $\mathcal L^*$ is similar, but in this case the semigroup $(\e^{-\mathcal L^*})_{t\ge 0}$ is even stochastic.)

\item This property does not extend to the dual Laplacian of \emph{any} directed graph. For instance, if $\mG$ is a star on three edges, then depending on the edges' orientation the dual Laplacian $\mathcal L^*$ is
\[
\begin{pmatrix}
2 & 1 & 1\\
1 & 2 & 1\\
1 & 1 & 2
\end{pmatrix},\quad
\begin{pmatrix}
2 & -1 & -1\\
-1 & 2 & 1\\
-1 & 1 & 2
\end{pmatrix},\quad
\begin{pmatrix}
2 & -1 & 1\\
-1 & 2 & -1\\
1 & -1 & 2
\end{pmatrix},\quad\hbox{or}\quad
\begin{pmatrix}
2 & 1 & -1\\
1 & 2 & -1\\
-1 & -1 & 2
\end{pmatrix}:
\]
none of these matrices generates a sub-Markovian semigroup.
Also, the eigenvalues of all these matrices are $1^{(2)},4$: the lowest eigenspace is spanned by
\[
\frac{1}{\sqrt{2}}\begin{pmatrix}
-1\\ 1\\ 0
\end{pmatrix},\qquad 
\frac{1}{\sqrt{6}}\begin{pmatrix}
1\\ 1\\ -2
\end{pmatrix}
\]
 in the former case, and by
\[
\frac{1}{\sqrt{2}}\begin{pmatrix}
1\\ 1\\ 0
\end{pmatrix},\qquad 
\frac{1}{\sqrt{6}}\begin{pmatrix}
1\\ -1\\ -2
\end{pmatrix}
\]
 in the last three cases: 
accordingly, the orthogonal projector onto the eigenspace corresponding to the eigenvalue 1 is 
\[
\frac{1}{3}\begin{pmatrix}
2 & -1 & -1\\
-1 & 2 & -1\\
-1 & -1 & 2
\end{pmatrix}
\qquad\hbox{and}\qquad
\frac{1}{3}\begin{pmatrix}
2 & 1 & 1\\
1 & 2 & -1\\
1 & -1 & 2
\end{pmatrix}
\] 
(or row and column permutations of the latter), respectively. 
Neither of these matrices is either positive or an $\infty$-contraction: accordingly, by \autoref{lem:glu} and \autoref{prop:eventlinftycontr} we deduce that the corresponding heat flows are neither asymptotically positive nor asymptotically $\infty$-contractive.
%Accordingly, by \autoref{lem:glu} $(\e^{-t\mathcal L^*})_{t\ge 0}$ is not asymptotically positive for any of these dual Laplacians. {\color{purple}Also, 
%A direct computation shows that the LHS of \eqref{eq:infty-contr-nonsimple} is in either case $\frac{4}{3}$ and, hence, the conditions \eqref{eq:infty-contr-nonsimple} is not satisfied, hence $(\e^{-t\mathcal L^*})_{t\ge 0}$ is not asymptotically $\infty$-contractive.
\end{enumerate}
\end{exa}

\subsection{Abstract simplicial complexes as directed hypergraphs}\label{sec:abstrsimpcomphyper}

It is well-known known that any (oriented) simplicial complex $\mathsf{K}$ can be regarded as an (oriented) hypergraph $\mH$. What about the associated Laplacians? 

%Let $\mK$ be a simplicial complex of dimension 2, i.e., $\mathsf{K}$ does not contain any tetrahedra; and denote by $\mV_\mK,\mE_\mK,\mF_\mK$ the sets of vertices, edges, and faces of $\mK$, respectively; and by $\delta_0,\delta_1$ its coboundary operators. We can turn $\mK$ into a hypergraph $\mH$ with vertex set $\mV_\mH:=\mV_\mK\cup \mF_\mK$ and hyperedge set $\mE_\mH:=\mE$ by introducing the incidence matrix
%\[
%\mathcal I_\mH:=\begin{pmatrix}
%\delta_0^* \\ \delta_1
%\end{pmatrix}.
%\]
%Accordingly, $\mathcal I_\mH^*=\begin{pmatrix}
%\delta_0 & \delta_1^*
%\end{pmatrix}$ and we conclude that the Laplacian of its dual hypergraph $\mH^*$ is
%\[
%\mathcal L_{\mH^*}=\mathcal I_\mH^*\mathcal I_\mH=\delta_0 \delta_0^* + \delta_1 \delta_1^*:
%\]
%in other words,  the Hodge Laplacian of $\mK$ is precisely the dual Laplacian $\mathcal L_{\mH^*}$.

%{\color{red}
We follow the pioneering paper \cite{HorJos13} and consider an \emph{abstract simplicial complex} $\mK$ on a finite set of vertices $\mV$, i.e., a collection of subsets of $\mV$ that is closed under inclusion. An element of $\mK$ of cardinality $i+1$ is called an \emph{$i$-face}: the collection of all $i$-faces in $\mK$ is denoted by $S_i(\mK)$, and $\bigcup_{0\le j\le i}S_i(\mK)$ is called the \emph{$i$-skeleton} of $\mK$.
% This means that if a set is in $K$, all of its subsets must also be in $K$.

Let $\mK$ be a simplicial complex of dimension $N$, i.e., $S_N(\mK)$ is non-empty but $S_{N+1}(\mK)$ is, and for $i=0,\ldots,N-1$ denote by 
$\delta_i: \K^{S_i(\mK)}\to \K^{S_{i+1}(\mK)}$ its \emph{coboundary operator}.
%which maps from the basis of the cochain group $C^i(\mK)\equiv \mK\^{.
%, and accordingly by 
%$\partial_i:=\delta_{i-i}^*: \K^{S_i(\mK)}\to \K^{S_{i-1}(\mK)}$ its \emph{boundary} operator. 
We can turn $S_i(\mK)$ into a hypergraph $\mH_i$ with vertex set $\mV_{\mH_i}:=S_{i-1}(\mK)\cup S_{i+1}(\mK)$ and hyperedge set $\mE_{\mH_i}:=S_{i}(\mK)$ by introducing the incidence matrix
\[
\mathcal I_{\mH_i}:=\begin{pmatrix}
\delta_{i-1}^* \\ \delta_i
\end{pmatrix}
\]
(with the convention that $\delta_{-1}=0$).
Accordingly, $\mathcal I_{\mH^*_i}=\mathcal I_{\mH_i}^\top=\begin{pmatrix}
\delta_{i-1} & \delta_i^*
\end{pmatrix}$ (i.e., the dual hypergraph $\mH^*_i$ is the union of the directed hypergraphs with incidence matrices $\delta_{i-1}$ and $\delta_i^*$, respectively) and we conclude that 
%the  dual Laplacian $\mathcal L_{\mH_i}^*$ is
\[
\mathcal L_{\mH^*_i}=\mathcal I_{\mH_i}^\top \mathcal I_{\mH_i}=\delta_{i-1} \delta_{i-1}^* + \delta_i \delta_i^*:
\]
in other words,  the $i$-dimensional Hodge Laplacian $\mathcal L^{(i)}_{\mK}$ of $\mK$ is precisely the dual Laplacian $\mathcal L_{\mH^*_i}$. For $i=0$, this is nothing but the usual Laplacian of (an arbitrary orientation of) the $0$-skeleton of $\mK$ (regarded as a graph). For $i\ge 1$, the heat flow driven by $-\mathcal L^{(i)}_\mK$ has been studied in~\cite{TorBia20}, where the focus was on the topological interpretation of the stationary solutions.

\begin{exa}\label{exa:simplcompl}
\begin{enumerate}[(1)]
\item If $\mK$ is the 2-simplex consisting of the three vertices $\mv_1,\mv_2,\mv_3\in S_0(\mK)$, of the oriented edges $(\mv_1,\mv_2),(\mv_3,\mv_2),(\mv_3,\mv_1)\in S_1(\mK)$, and of the face $\sigma\in S_2(\mK)$, then the co-boundary operators are given by
\[
\delta_0=\begin{pmatrix}
0 & -1 & 1\\
-1 & 0 & 1\\
-1 & 1 & 0
\end{pmatrix}\qquad\hbox{and}\qquad
\delta_1=\begin{pmatrix}
1 & -1 & 1
\end{pmatrix}.
\]

There is no orientation of $\mK$ that turns $\delta_1$ into the all-1 vector and, thus, we cannot immediately apply \autoref{lem:spectralgeomhyper} to this setting; though, we can invoke \autoref{rem:notation-after-josmul} to swap the roles of the vertices in the second hyperedge of $\mH_i$ and hence the sign of all entries in the second column of $\mathcal I_{\mH_i}$: the Hodge Laplacian is invariant under this swapping and we conclude that $(\e^{-t\mathcal L^{(1)}_\mK})_{t\ge 0}$ is positive and uniformly exponentially stable by \autoref{prop:arorgluunion} and \autoref{cor:frogexponential}, respectively.
Indeed,  a direct computation shows that the Hodge Laplacian acting on the edge space is
\[
\mathcal L^{(1)}_\mK=\begin{pmatrix}
3 & 0 & 0\\
0 & 3 & 0\\
0 & 0 & 3
\end{pmatrix}.
\]

\item\label{exaitem:torbiaest} Elaborating on a specific example from~\cite{TorBia20} (a simplicial complex consisting of three triangles sharing two edges), these ideas  were further discussed in \cite[Section~4.2]{MirEstEst23}: therein, an example of non-positive heat flow was worriedly observed on the simplicial complex with coboundary operators
\[
\delta_0:=
\begin{pmatrix}
-1& -1& 0& 0& 0& 0& 0\\
1& 0& -1& 0& -1& -1& 0\\
0& 1& 1& -1& 0& 0& 0\\
0& 0& 0& 1& 1& 0& -1\\
0& 0& 0& 0& 0& 1& 1
\end{pmatrix}\qquad \hbox{and}\qquad 
\delta_1:=
\begin{pmatrix}
1& 0& 0\\
-1& 0& 0\\
1& 1& 0\\
0& 1& 0\\
0& -1& 1\\
0& 0& -1\\
0& 0& 1
\end{pmatrix},
\]
both at the level of edges and at level of faces.
Indeed, the relevant Hodge Laplacian turns out to be 
\[
\mathcal L^{(1)}_\mK=\begin{pmatrix}
3 & 0 & 0 & 0 & -1 & -1 & 0\\
0 & 3 & 0 & -1 & 0 & 0 & 0\\
0 & 0 & 4 & 0 & 0 & 1 & 0\\
0 & -1 & 0 & 3 & 0 & 0 & -1\\
-1 & 0 & 0 & 0 & 4 & 0 & 0 \\
-1 & 0 & 1 & 0 & 0 & 3 & 0\\
0 & 0 & 0 & -1 & 0 & 0 &3
\end{pmatrix}\qquad \hbox{and}\qquad 
\mathcal L^{(2)}_\mK=
\begin{pmatrix}
3 & 1 & 0\\
1 & 3 & -1\\
0 & -1 & 3
\end{pmatrix}
\]
and one can check 
%with Maple 2024 
that for both matrices the orthogonal projector onto the eigenspace associated with their lowest eigenvalue ($3-\sqrt{2}$ for both $\mathcal L^{(1)}_\mK$ and $\mathcal L^{(2)}_\mK$) is not a positive matrix, hence neither $(\e^{-t\mathcal L^{(1)}_\mK})_{t\ge 0}$ nor $(\e^{-t\mathcal L^{(2)}_\mK})_{t\ge 0}$ are even asymptotically positive. Also, neither of them is asymptotically $\infty$-contractive, as \autoref{fig:simplicial-failure} shows. 
\begin{figure}[ht]
\includegraphics[scale=.45]{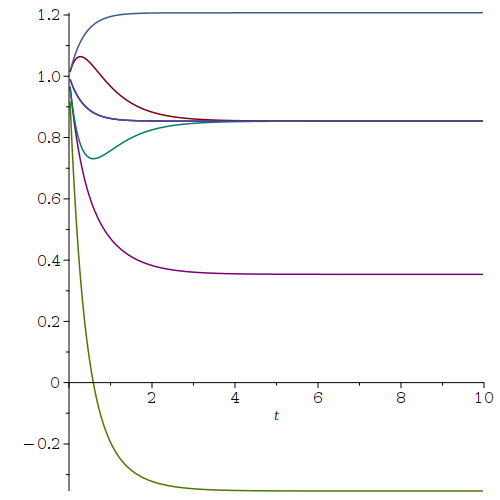} 
\includegraphics[scale=.45]{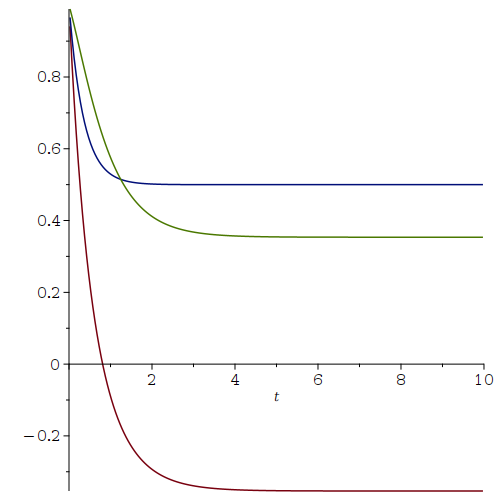} 
\caption{A plot of $\e^{-t\left(\mathcal L^{(1)}_\mK-(3-\sqrt{2})\right)}\mathbf{1}_7$ (left) and $\e^{-t\left(\mathcal L^{(2)}_\mK-(3-\sqrt{2})\right)}\mathbf{1}_3$ (right)  in \autoref{exa:simplcompl}.(\ref{exaitem:torbiaest}).}\label{fig:simplicial-failure}
\end{figure}
Actually, one can check that the $\infty$-norm of
$\left(\e^{-t\left(\mathcal L^{(2)}_\mK-(3-\sqrt{2})\right)}\right)_{t\ge 0}$ is given by
\[
\left\|\e^{-t\left(\mathcal L^{(2)}_\mK-(3-\sqrt{2})\right)}\begin{pmatrix}
0 \\ 1 \\ 0
\end{pmatrix}\right\|_1=
\frac{1}{2} 
\left( \sqrt{2}\left|\e^{-2\sqrt{2}t} - 1\right| +\left|\e^{-2\sqrt{2}t} + 1\right|\right) \stackrel{t\to\infty}{\to} \frac{1+\sqrt{2}}2
\]
Interestingly, this is also the limit value of the $\infty$-norm of
$\left(\e^{-t\left(\mathcal L^{(2)}_\mK-(3-\sqrt{2})\right)}\right)_{t\ge 0}$ as $t\to \infty$.

\end{enumerate}
\end{exa}

%}
%\begin{exa} 
%If $\mK$ is the 2-simplex, then choosing the face-induced edge orientation (i.e., $\partial[1,2,3]=(\me_{23}+\me_{31}+\me_{12})$, whence $\delta_1=\begin{pmatrix}
%1 & 1 & 1
%\end{pmatrix}$) $\mH^*$ can be regarded as the union of the line graph of its 1-skeleton and a hyperedge all of whose vertices are pairwise co-oriented: so we can apply \autoref{lem:spectralgeomhyper} and recover the fact that $0$ is not an eigenvalue of $\mathcal L_{\mH^*}$, and that by \autoref{prop:arorgluunion} $(\e^{-t\mathcal L_{\mH^*}})_{t\ge 0}$ is positive: indeed, a direct computation shows that the Hodge Laplacian is in this case $3\Id_3$.
%\end{exa}

\begin{rem}
While the Hodge Laplacians of a simplicial complex can be represented as the Laplacian of suitable directed hypergraphs, the latter cannot generally  be represented as graph Laplacians -- not even  in the \textit{generalised}  sense of Colin de Verdière \cite{Col90}. Indeed,
\begin{itemize}
\item \autoref{exa:prototyp}.(\ref{item:hyperrotat})  shows that the $\mv-\mw$-entry of the Laplacian may vanish even though $\mv,\mw$ belong to a common hyperedge;
\item \autoref{exa:prototyp}.(\ref{item:onehyperedge})--(\ref{item:hypersignless}) shows that 0 may be an eigenvalue of higher multiplicity, see Section~\ref{sec:spectheor};
\item there exist non-zero matrices $X$ that fully annihilate the Laplacian: for example, if we consider
 \[\mathcal I_3=\begin{pmatrix}
1 \\ 0 \\ 0
\end{pmatrix}\quad\hbox{whence}\quad 
\mathcal L_3=\begin{pmatrix}
1 & 0 & 0\\
0 & 0 & 0\\
0 & 0 & 0
\end{pmatrix},
\]
then $\mathcal L X=0$ for $X:=\begin{pmatrix}
0 & 0 & 0\\
0 & 0 & 1\\
0 & 1 & 0
\end{pmatrix}$.
\end{itemize}
\end{rem}

\section{An illustrative example: the Fano plane}\label{sec:fano}

The Fano plane in \autoref{fig:fanoplane} is a classical and, in view of its symmetries, especially interesting example of a hypergraph: the points and the lines drawing represent the hypergraph's vertices and hyperedges, respectively, and a vertex is an element of a hyperedge if the corresponding point lies on the corresponding line.
\begin{figure}[ht]
\begin{tikzpicture}[scale=3, every node/.style={circle,draw,inner sep=1.2pt,fill=white,font=\small}]
% Outer triangle vertices (equilateral)
\coordinate (A) at (90:1);
\coordinate (B) at (210:1);
\coordinate (C) at (330:1);

% Midpoints of edges
\coordinate (Mab) at ($ (A)!0.5!(B) $);
\coordinate (Mbc) at ($ (B)!0.5!(C) $);
\coordinate (Mca) at ($ (C)!0.5!(A) $);

% Center point
\coordinate (O) at (0,0);

% Draw triangle edges (three lines of the Fano plane)
\draw[thick] (A) -- (B);
\draw[thick] (B) -- (C);
\draw[thick] (C) -- (A);

% Draw medians (three more lines)
\draw[thick] (A) -- (Mbc);
\draw[thick] (B) -- (Mca);
\draw[thick] (C) -- (Mab);

% Draw circle through the three midpoints (the 7th line)
\draw[thick] (O) circle[radius=0.5]; % radius chosen so it passes through Mab,Mbc,Mca

% Place the seven points
\node[fill=black,label=above:$\mv_1$] (P1) at (A) {};
\node[fill=black,label=left:$\mv_2$] (P2) at (B) {};
\node[fill=black,label=right:$\mv_4$] (P3) at (C) {};

\node[fill=black,label=left:$\mv_3$] (P4) at (Mab) {};
\node[fill=black,label=below:$\mv_6$] (P5) at (Mbc) {};
\node[fill=black,label=right:$\mv_5$] (P6) at (Mca) {};

\node[fill=black,label=left:$\mv_7$] (P7) at (O) {};

% Optionally show which triples are lines (for reference only, commented out)
% \begin{scope}[shift={(0,-1.6)}]
% \node[anchor=west] at (0,0) {Lines: $\{A,B,M_{AB}\},\{B,C,M_{BC}\},\{C,A,M_{CA}\},$};
% \node[anchor=west] at (0,-0.2) {$\{A,O,M_{BC}\},\{B,O,M_{CA}\},\{C,O,M_{AB}\},\{M_{AB},M_{BC},M_{CA}\}$};
% \end{scope}
\end{tikzpicture}
\caption{A drawing of the Fano plane}\label{fig:fanoplane}
\end{figure}
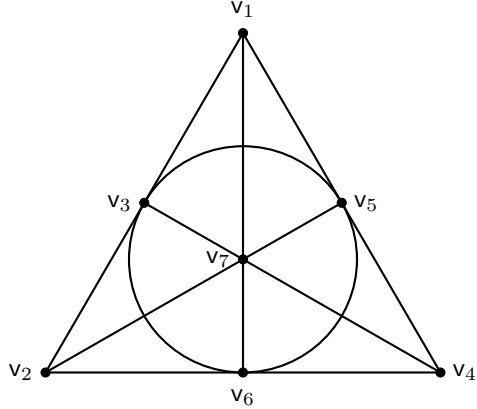
Its incidence matrix and Laplacian are
\begin{equation}\label{eq:lapl-fano-undir}
\mathcal I_F:=
\begin{pmatrix}
1 & 1 & 1 & 0 & 0 & 0 & 0\\
1 & 0 & 0 & 1 & 1 & 0 & 0\\
1 & 0 & 0 & 0 & 0 & 1 & 1\\
0 & 1 & 0 & 1 & 0 & 1 & 0\\
0 & 1 & 0 & 0 & 1 & 0 & 1\\
0 & 0 & 1 & 1 & 0 & 0 & 1\\
0 & 0 & 1 & 0 & 1 & 1 & 0
\end{pmatrix}
\quad\hbox{and}\quad
\mathcal L_F:=
\begin{pmatrix}
3 & 1 & 1 & 1 & 1 & 1 & 1\\
1 & 3 & 1 & 1 & 1 & 1 & 1\\
1 & 1 & 3 & 1 & 1 & 1 & 1\\
1 & 1 & 1 & 3 & 1 & 1 & 1\\
1 & 1 & 1 & 1 & 3 & 1 & 1\\
1 & 1 & 1 & 1 & 1 & 3 & 1\\
1 & 1 & 1 & 1 & 1 & 1 & 3\\
\end{pmatrix}.
\end{equation}
%\begin{minipage}{5cm}
%\begin{figure}
%\[
%\begin{pmatrix}
%1
%\end{pmatrix}
%\]
%\caption{...and its incidence matrix}
%\end{figure}
%\end{minipage}
Since $\mathcal L_F$ is neither diagonally dominant, nor a Z-matrix, the semigroup $(\e^{-t\mathcal L})_{t\ge 0}$ is neither $\infty$-contractive, nor positive. What about other orientations of the same hypergraph?

There are $2^{21}$ ways of turning the Fano plane into a directed hypergraph, but -- in view of the invariance property mentioned in \autoref{rem:notation-after-josmul} -- only $2^{14}=16.384$ different Laplacians (including the one in~\eqref{eq:lapl-fano-undir}), and only 112 up to permutations of lines and columns. 

They all share a few properties, including their trace, see \autoref{lem:gersh0}: indeed, the following holds.
%, and the non-positivity of the corresponding semigroup.

\begin{prop}\label{prop:fano-non-posit}
Let $\mH$ be any directed realisation of the Fano plane.
% with corresponding Laplacian $\mathcal L$. 
Then, the corresponding semigroups $(\e^{-t\mathcal L})_{t\ge 0}$ is neither positive, nor $\infty$-contractive.
 \end{prop}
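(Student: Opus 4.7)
My proof plan rests on two combinatorial properties of the Fano plane that are shared by every one of its directed realisations: every vertex lies on exactly three lines, i.e., $\deg(\mv)=3$ for all $\mv\in\mV$; and every pair of distinct vertices lies on exactly one common line, i.e., $\#(\mE_\mv\cap\mE_\mw)=1$ for all $\mv\neq\mw$. The second property is exactly the Steiner triple (or projective plane of order 2) axiom.

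From these two facts one gets, by \eqref{eq:josmul-basic}, that $|\mathcal L_{\mv\mw}|=|\#\mE^{\backslash\!\backslash}_{\mv\mw}-\#\mE^{\not\backslash\!\backslash}_{\mv\mw}|=1$ for every pair $\mv\neq\mw$, regardless of the chosen orientation; indeed, since $\mE_\mv\cap\mE_\mw$ is a singleton, either $\mv,\mw$ are co-oriented in the unique common hyperedge (so $\mathcal L_{\mv\mw}=+1$) or anti-oriented therein (so $\mathcal L_{\mv\mw}=-1$). Therefore
\[
\sum_{\mw\neq\mv}|\mathcal L_{\mv\mw}|=\#\mV-1=6>3=\deg(\mv)\qquad\hbox{for every }\mv\in\mV,
\]
so the sufficient-and-necessary diagonal dominance condition \eqref{eq:diag-dom} of \autoref{lem:mug07} fails for every orientation. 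Hence $(\e^{-t\mathcal L})_{t\ge 0}$ is never $\infty$-contractive.

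For positivity I invoke \autoref{prop:posit}: positivity is equivalent to $\#\mE^{\backslash\!\backslash}_{\mv\mw}\le\#\mE^{\not\backslash\!\backslash}_{\mv\mw}$ for every pair. Combined with $\#(\mE_\mv\cap\mE_\mw)=1$, this says precisely that every pair of vertices must be anti-oriented in the unique hyperedge containing them. I argue this is combinatorially impossible. Pick any hyperedge $\me\in\mE$; it contains exactly three vertices $\mv_1,\mv_2,\mv_3$, each of which is either in $\mesour$ or in $\metarg$. By pigeonhole, at least two of them, say $\mv_i,\mv_j$, lie in the same endset, and hence are co-oriented in $\me$. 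Since $\me$ is the unique hyperedge containing $\{\mv_i,\mv_j\}$, this forces $\#\mE^{\backslash\!\backslash}_{\mv_i\mv_j}=1>0=\#\mE^{\not\backslash\!\backslash}_{\mv_i\mv_j}$, violating \eqref{eq:M-matr-hypercond}. Thus no orientation of the Fano plane can satisfy the positivity criterion of \autoref{prop:posit}.

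There is no real obstacle: the argument is essentially a pigeonhole count once one notes that the Steiner property forces $|\mathcal L_{\mv\mw}|=1$ uniformly. The only point to be careful about is that the two failures are genuinely orientation-independent, which is guaranteed because the combinatorial inputs ($\deg\equiv 3$ and the pairwise-intersection count) do not depend on the chosen orientation.
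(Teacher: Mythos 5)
Your proof is correct and follows essentially the same route as the paper: both parts rest on the orientation-independent facts that $\deg\equiv 3$ and that any two points lie on exactly one line, combined with \autoref{prop:posit} and \autoref{lem:mug07}. Your pigeonhole argument (two of the three vertices of a line must share an endset and hence be co-oriented) is just a reformulation of the paper's sign-product contradiction $\iota_{\mv\me}\iota_{\mw\me}\cdot\iota_{\mv\me}\iota_{\mz\me}=\iota_{\mw\me}\iota_{\mz\me}$, and your explicit count $\sum_{\mw\ne\mv}|\mathcal L_{\mv\mw}|=6>3$ simply spells out the failure of \eqref{eq:diag-dom} that the paper asserts in one line.
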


\begin{proof} 
To begin with, observe that all diagonal entries of $\mathcal L=\mathcal I\mathcal I^\top$ are 3 (each point lies on 3 lines) and all off-diagonal entries must attain the value $1$ or $-1$ (two distinct points meet in exactly one line, so their row inner product is $\pm1$ determined by the sign choices on that line).

Let now assume the semigroup $(\e^{-t\mathcal L})_{t\ge 0}$ to be positive. Hence, $\mathcal L$ is a Z-matrix, i.e., every off-diagonal would is $-1$. In particular, for any line/hyperedge $\me$ containing three points/vertices $\mv,\mw,\mz$ there holds
\[
{\mathcal L}_{\mv\mw}={\mathcal L}_{\mv\mz}={\mathcal L}_{\mw\mz}=-1.
\]
However, we see that
\[
{\mathcal L}_{\mv\mw}=\iota_{\mv\me} \iota_{\mw\me},\quad {\mathcal L}_{\mv\mz}=\iota_{\mv\me} \iota_{\mz\me},\quad {\mathcal L}_{\mw\mz}=\iota_{\mw\me} \iota_{\mz\me},
\]
so multiplying the first two equalities gives
\[
1=(-1)(-1)={\mathcal L}_{\mv\mw}{\mathcal L}_{\mv\mz}=(\iota_{\mv\me} \iota_{\mw\me})(\iota_{\mv\me} \iota_{\mz\me})=\iota_{\mw\me} \iota_{\mz\me}={\mathcal L}_{\mw\mz}=-1,
\]
a contradiction.

On the other hand, \eqref{eq:diag-dom} is not satisfied -- hence, by \autoref{lem:mug07}, $(\e^{-t\mathcal L})_{t\ge 0}$ is not $\infty$-contractive -- for any realisation.
 \end{proof}
 
However, the directed realisations of the Fano plane may certainly differ in other regards.
In particular, only some of these Laplacian realisations have a simple lowest eigenvalue whose corresponding eigenspace is spanned by a strictly positive vector: hence, by \autoref{lem:glu}, some of the corresponding semigroups $(\e^{-t\mathcal L})_{t\ge 0}$ are eventually irreducible, and some are not. Also, there are realisations, like that induced by
\begin{equation*}\label{eq:lapl-fano-dir-simplebut}
\mathcal I_1:=
\begin{pmatrix}
1 & 1 & 1 & 0 & 0 & 0 & 0\\
1 & 0 & 0 & -1 & 1 & 0 & 0\\
1 & 0 & 0 & 0 & 0 & -1 & 1\\
0 & 1 & 0 & 1 & 0 & 1 & 0\\
0 & -1 & 0 & 0 & -1 & 0 & 1\\
0 & 0 & -1 & -1 & 0 & 0 & 1\\
0 & 0 & -1 & 0 & 1 & 1 & 0
\end{pmatrix}
\quad\hbox{whence}\quad
\mathcal L_1:=
\begin{pmatrix}
3 & 1 & 1 & 1 & -1 & -1 & -1\\
1 & 3 & 1 & -1 & -1 & 1 & 1\\
1 & 1 & 3 & -1 & 1 & 1 & -1\\
1 & -1 & -1 & 3 & -1 & -1 & 1\\
-1 & -1 & 1 & -1 & 3 & 1 & -1\\
-1 & 1 & 1 & -1 & 1 & 3 & 1\\
-1 & 1 & -1 & 1 & -1 & 1 & 3\\
\end{pmatrix},
\end{equation*}
such that 0 is a simple eigenvalue, but the corresponding eigenvector -- $e_1=(1,-1,0,-1,0,0,1)^\top$ in this case -- changes sign, thus by \autoref{lem:glu} preventing $(\e^{-t\mathcal L_1})_{t\ge 0}$ from being even asymptotically positive. Also, $(\e^{-t\mathcal L_1})_{t\ge 0}$ is asymptotically $\infty$-contractive by \autoref{cor:eventlinftycontr-matr}.

Consider the realisations given by the incidence matrices
\[
\mathcal I_2^\pm:=
\begin{pmatrix}
\pm1& \pm 1& -1& 0& 0& 0& 0\\
1& 0& 0& -1& -1& 0& 0\\
1& 0& 0& 0& 0& -1& -1\\
0& 1& 0& 1& 0& 1& 0\\
0& 1& 0& 0& 1& 0& 1\\
0& 0& 1& 1& 0& 0& 1\\
0& 0& 1& 0& 1& 1& 0
\end{pmatrix},\qquad\hbox{whence}\qquad
\mathcal L_2^\pm=
\begin{pmatrix}
3 & \pm 1& \pm 1 & \pm 1& \pm 1& -1& -1\\
\pm 1 & 3 & 1 & -1& -1& -1& -1\\
\pm 1 & 1 & 3 & -1& -1& -1& -1\\
\pm 1 & -1 & -1 & 3 & 1& 1& 1\\
\pm 1 & -1 & -1 & 1 & 3 & 1& 1\\
-1 & -1 & -1 & 1 & 1& 3& 1\\
-1 & -1 & -1 & 1 & 1& 1& 3\\
\end{pmatrix}.
\]
Using Maple 2024 we find that the lowest eigenvalue is $\approx 0.10345$ (which is the lowest root of the polynomial $x^3-13x^2+40x-4$) and that it is simple; that the corresponding eigenspace is spanned by the vector
\[
e_1^-\approx \begin{pmatrix}
2.53982\\
1.67836\\
1.67836\\
1\\
1\\
1\\
1
\end{pmatrix}
\qquad\hbox{or}\qquad
e_1^+\approx \begin{pmatrix}
2.53982\\
-1\\
-1\\
-1.67836\\
-1.67836\\
1\\
1
\end{pmatrix}
\]
respectively.
By  \autoref{rem:inftyposobv}.(\ref{item:inftyposobv-equiv}), both $(\e^{-t\mathcal L_2^\pm})_{t\ge 0}$
are eventually $\infty$-contractive; and by \autoref{cor:eventlinftycontr-matr} neither of them is asymptotically $\infty$-contractive.
 However, 
\autoref{lem:glu}  implies that $(\e^{-t\mathcal L_2^-})_{t\ge 0}$ is eventually irreducible, whereas $(\e^{-t\mathcal L_2^+})_{t\ge 0}$ is not even asymptotically positive.
Indeed, again by Maple 2024 we find that both $\e^{-t\mathcal L_2^\pm}$ are $\infty$-contractive for all $t\ge t_0\approx 4.316$; and that
$\e^{-t\mathcal L_2^-}$ is positive for all $t\ge t_0'\approx 0,727$.
%\footnote{\color{red{Why not a contradiction with \autoref{prop:eventlinftycontr}.(2)?}}}

\section{Conclusions and outlook}

Hypergraph Laplacians, viewed through the lens of semigroup theory, occupy an intermediate position between classical graph Laplacians -- quintessentially Markovian -- and general symmetric dissipative operators, which need not preserve order at all. The results established here make this observation precise: the quadratic form associated with the Laplacian on directed hypergraphs is always accretive, but positivity and $\infty$-contractivity -- or their relaxations -- hold only under strong combinatorial restrictions. In most nontrivial cases, the generated semigroup -- while self-adjoint -- is non-Markovian, thus providing a minimal linear model for higher-order, truly nonlocal diffusion.
%By constructing the Laplacian $\mathcal{L}$ as $\mathcal{I}\mathcal{I}^\top$ using a directed hypergraph's signed incidence matrix $\mathcal{I}$, we established a robust semigroup theory for the heat flow $u_t = -\mathcal{L}u$.

Our key findings include:
\begin{itemize}
\item 
\underline{Failure of sub-Markovian Properties:} The central finding was that, unlike on graphs, the heat flow on general directed hypergraphs frequently lacks positivity or stochasticity (or both), demonstrating that the Markov property is uncommon in this higher-order setting. This is akin to the behaviour already observed for Hodge Laplacians on simplicial complexes, which we have in fact shown to be special cases of Laplacians on directed hypergraphs.

\item 
\underline{Eventual recovery:} We identified precise combinatorial conditions under which these anomalous dynamics disappear, showing that the heat flow may recover positivity and stochasticity asymptotically after a finite transient period.  It is intriguing that the length of this transient cannot be estimated by  currently available methods.
%This included characterizing spectral bounds and conditions for eventual positivity and $\infty$-contractivity.
 
 \item
\underline{Dissensus models:} Dynamical systems driven by graph Laplacians are popular in the complex networks literature since they can be interpreted as simple examples of consensus models; \autoref{fig:difference} suggests dynamical systems driven by directed hypergraph Laplacians may be simple examples of phenomena where the difference between two actors with same initial opinion can be amplified.

 \item \underline{Structural Dependence:} Our analysis  showed that fundamental properties like positivity and consensus equilibrium are highly dependent on the hypergraph's structure. Specific structures, such as duals of oriented graphs and directed realisations of the Fano plane, were used to illustrate these phenomena.
\end{itemize}

The results presented here pave the way for several natural and important directions for future research.
%Generalization to Chemical Hypergraphs: The current framework restricts hyperedges to have disjoint source and target endsets ($e_{sour} \cap e_{targ} = \emptyset$)8. An important extension is to incorporate catalyst vertices (chemical hypergraphs), where $e_{sour} \cap e_{targ} \neq \emptyset$, by adapting the incidence matrix to account for these vertices9.
\begin{itemize}
\item Our analysis focused on the linear heat flow. Given the relevance of directed hypergraphs in chemical and cellular networks, extending the analysis to nonlinear heat flows would be a crucial next step, elaborating on the findings in~\cite{Faz23,FazTenLuk24} and particularly investigating if the observed dissensus equilibrium (as seen in \autoref{exa:prototyp}) persists or transforms under nonlinear perturbations. 

%The relationship between the null spaces of $\mathcal{L}_H$ and $\mathcal{L}_{H^*}$ (Equation 2.1) suggests rich connections between the structure and its dual.

\item The physical interpretation of the heat flow on directed hypergraphs and its difference from the heat flow on the hypergraph's 2-section should be better understood, especially with respect to the issue of eventual Markovian dynamics: this should be applied to realistic models in non-equilibrium thermodynamics and molecular or neural networks  \cite{AktAkb21,Dak25} and may provide criteria for predicting when complex higher-order interactions eventually lead to stable, well-behaved diffusive processes over long time scales.

\item
Developing a stronger, combinatorial notion of connectedness for directed hypergraphs that is provably equivalent to the eventual irreducibility of the semigroup is a significant  area for future work. 
%Further exploration into the interplay between this Laplacian and the Hodge Laplacian of simplicial complexes  could provide a deeper geometric and homological understanding of the dynamics on directed hypergraphs.fff
\end{itemize}

\bibliographystyle{plain}
\bibliography{../../referenzen/literatur}

\begin{thebibliography}{10}

\bibitem{AktAkb21}
M.E. Aktas and E.~Akbas.
\newblock Graph classification via heat diffusion on simplicial complexes.
\newblock {\em IEEE Access}, 9:12291--12300, 2021.

\bibitem{AroGlu23}
S.~Arora and J.~Gl{\"u}ck.
\newblock Criteria for eventual domination of operator semigroups and
  resolvents.
\newblock In {\em Operators, Semigroups, Algebras and Function Theory (Proc.\
  Lancaster 2021)}, volume 292 of {\em Operator Theory: Advances and
  Applications}, pages 1--26, Cham, 2023. Birkh\"{a}user.

\bibitem{BakGenLed14}
D.~Bakry, I.~Gentil, and M.~Ledoux.
\newblock {\em {Analysis and Geometry of Markov Diffusion Operators}}, volume
  348 of {\em Grundlehren der mathematischen Wissenschaften}.
\newblock Springer-Verlag, Berlin, 2014.

\bibitem{BarKel25}
P.\ Bartmann and M.\ Keller.
\newblock On {H}odge {L}aplacians on general simplicial complexes.
\newblock arXiv:2508.07761, 2025.

\bibitem{BatKraRha17}
A.~B\'{a}tkai, M.~Kramar~Fijav\v{z}, and A.~Rhandi.
\newblock {\em {Positive Operator Semigroups}}, volume 257 of {\em Operator
  Theory: Advances and Applications}.
\newblock Birkh\"{a}user, Cham, 2017.

\bibitem{BecGreMug21}
S.~Becker, F.~Gregorio, and D.~Mugnolo.
\newblock {S}chrödinger and polyharmonic operators on infinite graphs:
  Parabolic well-posedness and $p$-independence of spectra.
\newblock {\em J.\ Math.\ Anal.\ Appl.}, 495:124748, 2021.

\bibitem{Ber73}
C.~Berge.
\newblock {\em {Graphs and Hypergraphs}}, volume~6 of {\em Math.\ Library}.
\newblock North-Holland, Amsterdam, 1973.

\bibitem{BeuDen59}
A.\ Beurling and J.~Deny.
\newblock {Dirichlet spaces}.
\newblock {\em Proc.\ Natl.\ Acad.\ Sci.\ USA}, 45:208--215, 1959.

\bibitem{Bia21b}
G.~Bianconi.
\newblock {\em {Higher-Order Networks}}.
\newblock Cambridge Elements. Cambridge University Press, Cambridge, 2021.

\bibitem{BomChePar05}
E.G. Boman, D.~Chen, O.~Parekh, and S.~Toledo.
\newblock On factor width and symmetric ${H}$-matrices.
\newblock {\em Lin.\ Algebra Appl.}, 405:239--248, 2005.

\bibitem{Bre13}
A.~Bretto.
\newblock {\em Hypergraph Theory}.
\newblock Math.\ Engineering. Springer-Verlag, Heidelberg, 2013.

\bibitem{Che20}
Y.~Chebbi.
\newblock Spectral gap of the discrete {L}aplacian on triangulations.
\newblock {\em J.\ Math.\ Phys.}, 61:103507, 2020.

\bibitem{Chu93}
F.~Chung.
\newblock The {L}aplacian of a hypergraph.
\newblock In J.~Friedman, editor, {\em Expanding Graphs (Proc.\ Princeton
  1992)}, volume~10 of {\em DIMACS}, pages 21--36, 1993.

\bibitem{Chu97}
F.R.K. Chung.
\newblock {\em Spectral Graph Theory}, volume~92 of {\em Reg.\ Conf.\ Series
  Math.}
\newblock Amer.\ Math.\ Soc., Providence, RI, 1997.

\bibitem{Col90}
Y.~Colin~de Verdière.
\newblock Sur un nouvel invariant des graphes et un crit\`ere de planarit{\'e}.
\newblock {\em J.\ Comb.\ Theory. Ser.\ B}, 50:11--21, 1990.

\bibitem{CooDut12}
J.~Cooper and A.~Dutle.
\newblock Spectra of uniform hypergraphs.
\newblock {\em Lin.\ Algebra Appl.}, 436:3268--3292, 2012.

\bibitem{Cve05}
D.~Cvetkovi{\'c}.
\newblock Signless {L}aplacians and line graphs.
\newblock {\em Bull.\ Acad.\ Serbe Sci.\ Arts, Cl.\ Sci.\ Math.\ Natur., Sci.\
  Math.}, 131:85--92, 2005.

\bibitem{CveRowSim10}
D.~Cvetkovi{\'c}, P.~Rowlinson, and S.~Simi{\'c}.
\newblock {\em {An Introduction to the Theory of Graph Spectra}}, volume~75 of
  {\em London Math.\ Soc.\ Lect.\ Student Texts}.
\newblock Cambridge Univ.\ Press, 2010.

\bibitem{Dak25}
S.~Dakurah.
\newblock Discrete heat kernels on simplicial complexes and its application to
  functional brain networks.
\newblock arXiv:2509.16908.

\bibitem{DalLecPol23}
S.~Dal~Cengio, V.~Lecomte, and M.~Polettini.
\newblock Geometry of nonequilibrium reaction networks.
\newblock {\em Physical Review X}, 13:021040, 2023.

\bibitem{DalMugSch15}
A.~Dall'Acqua, D.~Mugnolo, and M.~Schelling.
\newblock A new {G}ershgorin-type result for the localisation of the spectrum
  of matrices.
\newblock {\em Math.\ Nachr.}, 288:1981--1994, 2015.

\bibitem{DanGluKen16}
D.~Daners, J.~Gl{\"u}ck, and J.B. Kennedy.
\newblock Eventually positive semigroups of linear operators.
\newblock {\em J.\ Math.\ Anal.\ Appl.}, 433:1561--1593, 2016.

\bibitem{DenKunPlo21}
R.~Denk, M.~Kunze, and D.~Plo{\ss}.
\newblock {The Bi-Laplacian with Wentzell boundary conditions on Lipschitz
  domains}.
\newblock {\em Integr.\ Equ.\ Oper.\ Theory}, 93:13, 2021.

\bibitem{Est25}
E.~Estrada.
\newblock {Is the pseudoinverse of the Laplacian a proper graph Laplacian?}
\newblock hal-05089665, 2025.

\bibitem{Faz23}
A.~Fazeny.
\newblock $p$-{L}aplacian {O}perators on {H}ypergraphs.
\newblock Master's thesis, Friedrich-Alexander Universität Erlangen-Nürnberg,
  2023.

\bibitem{FazTenLuk24}
A.~Fazeny, D.~Tenbrinck, K.~Lukin, and M.~Burger.
\newblock Hypergraph $p$-{L}aplacians and scale spaces.
\newblock {\em J.\ Math.\ Imaging Vision}, 66:529--549, 2024.

\bibitem{FenLi96}
K.~Feng and W.-C.W. Li.
\newblock Spectra of hypergraphs and applications.
\newblock {\em J. Number Theory}, 60:1--22, 1996.

\bibitem{FenYouZha19}
Y.~Feng, H.~You, Z.~Zhang, R.~Ji, and Y.~Gao.
\newblock Hypergraph neural networks.
\newblock In {\em Proc.\ 33rd AAAI conference on artificial intelligence},
  pages 3558--3565. AAAI, 2019.

\bibitem{Fie73}
M.~Fiedler.
\newblock Algebraic connectivity of graphs.
\newblock {\em Czech.\ Math.\ J.}, 23:298--305, 1973.

\bibitem{Fri93}
J.~Friedman.
\newblock Some geometric aspects of graphs and their eigenfunctions.
\newblock {\em Duke Math.\ J.}, 69:487--525, 1993.

\bibitem{FriWid95}
J.~Friedman and A.~Wigderson.
\newblock On the second eigenvalue of hypergraphs.
\newblock {\em Combinatorica}, 15:43--65, 1995.

\bibitem{FukIkeUch24}
T.~Fukao, M.~Ikeda, and S.~Uchida.
\newblock Heat equation on the hypergraph containing vertices with given data.
\newblock {\em J.\ Math.\ Anal.\ Appl.}, 540:128675, 2024.

\bibitem{FukOshTak11}
M.~Fukushima, Y.~Oshima, and M.~Takeda.
\newblock {\em {Dirichlet forms and symmetric Markov processes}}, volume~19 of
  {\em Studies in Math.}
\newblock De Gruyter, Berlin, 2011.

\bibitem{GalLonPal93}
G.~Gallo, G.~Longo, S.~Pallottino, and S.~Nguyen.
\newblock Directed hypergraphs and applications.
\newblock {\em Disc.\ Appl.\ Math.}, 42:177--201, 1993.

\bibitem{Glu16}
J.\ Gl{\"u}ck.
\newblock {\em {Invariant Sets and Long Time Behaviour of Operator
  Semigroups}}.
\newblock PhD thesis, Universit{\"a}t Ulm, 2016.

\bibitem{Glu22}
J.~Gl{\"u}ck.
\newblock Evolution equations with eventually positive solutions.
\newblock {\em EMS Magazine}, pages 4--11, 2022.

\bibitem{GluMug21}
J.~Gl{\"u}ck and D.~Mugnolo.
\newblock Eventual domination for linear evolution equations.
\newblock {\em Math.\ Z.}, 299:1421--1433, 2021.

\bibitem{GluMui24}
J.~Gl{\"u}ck and J.~Mui.
\newblock Non-positivity of the heat equation with non-local robin boundary
  conditions.
\newblock arxiv.org/abs/2404.15114.

\bibitem{GreMug20}
F.\ Gregorio and D.~Mugnolo.
\newblock Bi-{L}aplacians on graphs and networks.
\newblock {\em J.\ Evol.\ Equ.}, 20:191--232, 2020.

\bibitem{HeiSetJos13}
M.~Hein, S.~Setzer, L.~Jost, and S.S. Rangapuram.
\newblock The total variation on hypergraphs -- learning on hypergraphs
  revisited.
\newblock In {\em Advances in Neural Information Processing Systems (NIPS)},
  volume~26, pages 2427--2435, Cambridge, MA, 2013. MIT Press.

\bibitem{HorJos13}
D.~Horak and J.~Jost.
\newblock Interlacing inequalities for eigenvalues of discrete {L}aplace
  operators.
\newblock {\em Ann.\ Global Anal.\ Geom.}, 43:177--207, 2013.

\bibitem{HorJos13b}
D.~Horak and J.~Jost.
\newblock Spectra of combinatorial {L}aplace operators on simplicial complexes.
\newblock {\em Advances in Math.}, 244:303--336, 2013.

\bibitem{HorJoh90}
R.A. Horn and C.R. Johnson.
\newblock {\em {Matrix Analysis}}.
\newblock Cambridge Univ.\ Press, Cambridge, 1990.

\bibitem{HorJoh91}
R.A. Horn and C.R. Johnson.
\newblock {\em {Topics in Matrix Analysis}}.
\newblock Cambridge Univ.\ Press, Cambridge, 1991.

\bibitem{IkeMiyTak22}
M.\ Ikeda, A.\ Miyauchi, Y.\ Takai, and Y.~Yoshida.
\newblock Finding {C}heeger cuts in hypergraphs via heat equation.
\newblock {\em Theoretical Computer Science}, 930:1--23, 2022.

\bibitem{IkeUch23}
M.\ Ikeda and S.~Uchida.
\newblock Nonlinear evolution equation associated with hypergraph {L}aplacian.
\newblock {\em Math.\ Meth.\ Appl.\ Sci.}, 46:9463--9476, 2023.

\bibitem{JosMul19}
J.\ Jost and R.~Mulas.
\newblock {Hypergraph Laplace operators for chemical reaction networks}.
\newblock {\em Adv.\ Math.}, 351:870--896, 2019.

\bibitem{JosMulZha22}
J.\ Jost, R.~Mulas, and D.~Zhang.
\newblock $p$-{L}aplace operators for oriented hypergraphs.
\newblock {\em Vietnam J.\ Math.}, 50:323--358, 2022.

\bibitem{KelLenWoj21}
M.~Keller, D.~Lenz, and R.K. Wojciechowski.
\newblock {\em Graphs and Discrete {D}irichlet Spaces}, volume 358 of {\em
  Grundlehren der mathematischen Wissenschaften}.
\newblock Springer-Verlag, Cham, 2021.

\bibitem{KlaHauThe09}
S.~Klamt, U.-U. Haus, and F.~Theis.
\newblock Hypergraphs and cellular networks.
\newblock {\em PLoS Comput.\ Biol.}, 5:e1000385, 2009.

\bibitem{KonSko95}
E.V. Konstantinova and V.A. Skorobogatov.
\newblock Molecular hypergraphs: {T}he new representation of nonclassical
  molecular structures with polycentric delocalized bonds.
\newblock {\em J.\ Chem.\ Inf.\ Comput.\ Sci.}, 35:472--478, 1995.

\bibitem{KurMulNab25}
P.~Kurasov, J.~Muller, and S.~Naboko.
\newblock Maximal dissipative operators on metric graphs: {R}eal eigenvalues
  and their multiplicities.
\newblock {\em Trans.\ Amer.\ Math.\ Soc., Series B}, 12:576--629, 2025.

\bibitem{Lou15}
A.~Louis.
\newblock Hypergraph {M}arkov operators, eigenvalues and approximation
  algorithms.
\newblock In {\em Proc.\ 47th Symposium on Theory of computing}, pages
  713--722, New York, 2015. ACM.

\bibitem{MazSolTol23}
J.M. Maz{\'o}n, M.~Solera-Diana, and J.J. Toledo-Melero.
\newblock {\em {Variational and Diffusion Problems in Random Walk Spaces}},
  volume 103 of {\em Progress in Nonlinear Differential Equations and Their
  Applications}.
\newblock Birkhäuser, Basel, 2023.

\bibitem{MirEstEst23}
M.~Miranda, G.~Estrada-Rodriguez, and E.~Estrada.
\newblock What is in a simplicial complex? {A} metaplex-based approach to its
  structure and dynamics.
\newblock {\em Entropy}, 25:1599, 2023.

\bibitem{Mug07}
D.\ Mugnolo.
\newblock Gaussian estimates for a heat equation on a network.
\newblock {\em Networks Het.\ Media}, 2:55--79, 2007.

\bibitem{Mug14d}
D.\ Mugnolo.
\newblock Laplacians on quantum hypergraphs.
\newblock {\em PAMM, Proc.\ Appl.\ Math.\ Mechanics}, 14:975--976, 2014.

\bibitem{Mug18}
D.~Mugnolo.
\newblock Dynamical systems associated with adjacency matrices.
\newblock {\em Disc.\ Cont.\ Dyn.\ Syst.\ B}, 23:1945--1973, 2018.

\bibitem{MulHorJos22}
R.~Mulas, D.~Horak, and J.~Jost.
\newblock {Graphs, Simplicial Complexes and Hypergraphs: Spectral Theory and
  Topology}.
\newblock In {\em Higher-Order Systems}, Underst.\ Compl.\ Syst.
  Springer-Verlag, Cham, 2022.

\bibitem{MulKueBoe22}
R.~Mulas, C.~Kuehn, T.~B{\"o}hle, and J.~Jost.
\newblock Random walks and laplacians on hypergraphs: When do they match?
\newblock {\em Discr.\ Appl.\ Math.}, 317:26--41, 2022.

\bibitem{MulKueJos20}
R.~Mulas, C.~Kuehn, and J.~Jost.
\newblock {Coupled dynamics on hypergraphs: Master stability of steady states
  and synchronization}.
\newblock {\em Phys.\ Rev.\ E}, 101:062313, 2020.

\bibitem{Mur08}
A.~Mura, M.S. Taqqu, and F.~Mainardi.
\newblock Non-{M}arkovian diffusion equations and processes: analysis and
  simulations.
\newblock {\em Physica A}, 387:5033--5064, 2008.

\bibitem{Nag86}
R.\ Nagel, editor.
\newblock {\em One-{P}arameter {S}emigroups of {P}ositive {O}perators}, volume
  1184 of {\em Lect.\ Notes Math.}
\newblock Springer-Verlag, Berlin, 1986.

\bibitem{Ouh05}
E.M. Ouhabaz.
\newblock {\em Analysis of {H}eat {E}quations on {D}omains}, volume~30 of {\em
  Lond.\ Math.\ Soc.\ Monograph Series}.
\newblock Princeton Univ.\ Press, Princeton, NJ, 2005.

\bibitem{ParRosTes16}
O.~Parzanchevski, R.~Rosenthal, and R.J. Tessler.
\newblock Isoperimetric inequalities in simplicial complexes.
\newblock {\em Combinatorica}, 36:195--227, 2016.

\bibitem{Pea15}
K.J. Pearson.
\newblock Spectral hypergraph theory of the adjacency hypermatrix and matroids.
\newblock {\em Lin.\ Algebra Appl.}, 465:176--187, 2015.

\bibitem{PeaZha14}
K.J. Pearson and T.~Zhang.
\newblock On spectral hypergraph theory of the adjacency tensor.
\newblock {\em Graphs and Combinatorics}, 30:1233--1248, 2014.

\bibitem{PolEsp14}
M.\ Polettini and M.~Esposito.
\newblock Irreversible thermodynamics of open chemical networks {I}: {E}mergent
  cycles and broken conservation laws.
\newblock {\em J. Chem. Phys.}, 141:024117, 2014.

\bibitem{ProBenTud22}
K.~Prokopchik, A.R. Benson, and F.~Tudisco.
\newblock Nonlinear feature diffusion on hypergraphs.
\newblock {\em Proc.\ Machine Learning Research}, 162:17945--17958, 2022.

\bibitem{RefRus12}
N.~Reff and L.J. Rusnak.
\newblock An oriented hypergraphic approach to algebraic graph theory.
\newblock {\em Lin.\ Algebra Appl.}, 437:2262--2270, 2012.

\bibitem{SaiHer23}
S.~Saito and M.~Herbster.
\newblock Generalizing $p$-{L}aplacian: spectral hypergraph theory and a
  partitioning algorithm.
\newblock {\em Machine Learning}, 112:241--280, 2023.

\bibitem{SaiManSuz18}
S.~Saito, D.~Mandic, and H.~Suzuki.
\newblock Hypergraph $p$-{L}aplacian: {A} differential geometry view.
\newblock In {\em Proc.\ 32nd AAAI conference on artificial intelligence},
  pages 3984--3991. AAAI, 2018.

\bibitem{ShaTiaZha25}
M.~Shao, Y.~Tian, and L.~Zhao.
\newblock Calculus of variations on hypergraphs.
\newblock {\em J.\ Geom.\ Analysis}, 35:66, 2025.

\bibitem{SriPolEsp23}
S.G.M. Srinivas, M.~Polettini, M.~Esposito, and F.~Avanzini.
\newblock Deficiency, kinetic invertibility, and catalysis in stochastic
  chemical reaction networks.
\newblock {\em J.\ Chem.\ Phys.}, 158, 2023.

\bibitem{TakMiyIke20}
Y.~Takai, A.~Miyauchi, M.~Ikeda, and Y.~Yoshida.
\newblock Hypergraph clustering based on pagerank.
\newblock In {\em Proc.\ 26th ACM Int.\ Conf.\ Knowl.\ Discov.\ Data Mining
  (SIGKDD)}, pages 1970--1978, New York, 2020. ACM.

\bibitem{TorBia20}
J.J. Torres and G.~Bianconi.
\newblock Simplicial complexes: higher-order spectral dimension and dynamics.
\newblock {\em J.\ Phys.\ Complex.}, 1:015002, 20202.

\bibitem{TudHig21}
F.~Tudisco and D.J. Higham.
\newblock Node and edge nonlinear eigenvector centrality for hypergraphs.
\newblock {\em Communications Physics}, 4:201, 2021.

\bibitem{Yos19}
Y.~Yoshida.
\newblock Cheeger inequalities for submodular transformations.
\newblock In {\em Proc.\ 30th Annual ACM--SIAM Symposium on Discrete
  Algorithms}, pages 2582--2601, Philadelphia, PA, 2019. SIAM.

\bibitem{ZhoHuaSch07}
D.~Zhou, J.~Huang, and B.~Sch{\"o}lkopf.
\newblock Learning with hypergraphs: Clustering, classification, and embedding.
\newblock In {\em Advances in neural information processing systems (NIPS)},
  volume~19, pages 1601--1608, Cambridge, MA, 2007. MIT Press.

\end{thebibliography}

\end{document}